\documentclass[12pt,leqno]{article}
\usepackage{graphicx, fullpage}
\usepackage{amsmath,amssymb,amsthm,amscd, bm}
\usepackage{fancyhdr}
\usepackage[mathscr]{eucal}
\usepackage{amsfonts}
\begin{document}
\parskip=6pt

\theoremstyle{plain}

\newtheorem {thm}{Theorem}[section]
\newtheorem {lem}[thm]{Lemma}
\newtheorem {cor}[thm]{Corollary}
\newtheorem {defn}[thm]{Definition}
\newtheorem {prop}[thm]{Proposition}
\newtheorem {ex}[thm]{Example}
\numberwithin{equation}{section}

\newcommand{\cF}{{\cal F}}
\newcommand{\cA}{{\cal A}}
\newcommand{\cC}{{\cal C}}
\newcommand{\cH}{{\cal H}}
\newcommand{\cU}{{\cal U}}
\newcommand{\cK}{{\cal K}}
\newcommand{\cM}{{\cal M}}
\newcommand{\cO}{{\cal O}}
\newcommand{\cE}{{\cal E}}
\newcommand{\bC}{\mathbb C}
\newcommand{\bP}{\mathbb P}
\newcommand{\bN}{\mathbb N}
\newcommand{\bA}{\mathbb A}
\newcommand{\bR}{\mathbb R}
\newcommand{\fg}{\mathfrak g}
\newcommand{\fh}{\mathfrak h}
\newcommand{\ft}{\mathfrak t}
\newcommand{\fu}{\mathfrak u}
\newcommand{\fv}{\mathfrak v}
\newcommand{\fw}{\mathfrak w}
\newcommand{\fx}{\mathfrak x}
\newcommand{\fy}{\mathfrak y}
\newcommand{\fz}{\mathfrak z}
\newcommand{\fB}{\mathfrak B}
\newcommand{\fC}{\mathfrak C}
\newcommand{\fD}{\mathfrak D}
\newcommand{\fO}{\mathfrak O}
\newcommand{\fT}{\mathfrak T}
\newcommand{\fU}{\mathfrak U}
\newcommand{\fV}{\mathfrak V}
\newcommand{\fW}{\mathfrak W}
\newcommand{\fX}{\mathfrak X}
\newcommand{\fY}{\mathfrak Y}
\newcommand{\fZ}{\mathfrak Z}
\newcommand{\var}{\varepsilon}
\renewcommand\qed{ }
\newcommand{\sgrad}{\text{sgrad} \,}
\newcommand{\grad}{\text{grad}\,}
\newcommand{\id}{\text{id}}
\newcommand{\ad}{\text{ad}}
\newcommand{\const}{\text{const}\,}
\newcommand{\Ker}{\text{Ker}\,}
\newcommand{\opartial}{\overline\partial}
\newcommand{\Ree}{\text{Re}\,}
\newcommand{\Imm}{\text{Im}\,}
\newcommand{\an}{\text{an}}
\newcommand{\tr}{\text{tr}}
\newcommand{\abt}{^\text{ab}}
\newcommand{\ABt}{^\text{AB}}
\newcommand{\Hom}{\text{Hom}\,}
\newcommand{\End}{\text{End}\,}
\newcommand{\GL}{\text{GL}}
\newcommand{\Iso}{\text{Iso}\,}
\newcommand{\Hm}{\text{Hom}}
\newcommand{\Is}{\text{Iso}}

\begin{titlepage}
\title{\bf Aron--Berner--type extension in complex Banach manifolds\thanks{ 2020 Mathematics subject classification 32K05, 46G20, 58D10}}
\author{L\'aszl\'o Lempert\\
 Department of  Mathematics\\
Purdue University\\West Lafayette, IN
47907-2067, USA}
\end{titlepage}
\date{}
\maketitle
\abstract
Let $S$ be a compact Hausdorff space and $X$ a complex manifold. We consider the space $C(S,X)$ of continuous maps $S\to X$, and prove that any bounded holomorphic function on this space can be continued to a holomorphic 
function, possibly multivalued, on a larger space $B(S,X)$ of Borel maps. As an application we prove two theorems about bounded holomorphic functions on $C(S,X)$, one reminiscent of the Monodromy Theorem, the other of Liouville's Theorem. 
\endabstract

\section*{Introduction}

One version of the Aron--Berner extension theorem considers a complex Banach space $\fX$ and for every bounded holomorphic function 
on its unit ball provides a holomorphic extension to the unit ball of the bidual (second dual) 
$\fX''\supset \fX$. After the initial, somewhat 
weaker result of Aron and Berner, the sharp form above was proved by Davie and Gamelin. Aron and Berner also proved a similar theorem with the ball in 
$\fX$ replaced by a general open subset of $\fX$, that was sharpened subsequently by Zalduendo \cite{AB78, DG89, Z90}.---It is well understood in infinite dimensional complex analysis that the result is tight in many ways. While linear forms on $\fX$ can be extended
 to an arbitrary superspace $\fY$, this is no longer generally true for holomorphic functions, not even for quadratic functions.
 Work of Dineen and Josefson gives that extension even to $\fX''$ may fail without the boundedness assumption. For example, a holomorphic function $f$ on the space $\fX=c_0$ of complex sequences tending to $0$ extends to a holomorphic function on 
 $\fX''\approx l^\infty$ if and only if $f$ is bounded on every bounded subset of $\fX$ \cite{D71, J78}.
 
 A natural question is whether similar extension is possible for complex Banach manifolds $\fX$ rather than open subsets of Banach spaces.
 Of course, before talking about extensions of holomorphic functions, one should first construct a manifold $\fX''$ to play the role of the bidual, apparently
 a hard problem. Dineen and Venkova addressed it when $\fX$ is a Riemann domain over a certain type of Banach space, and extended holomorphic functions to a Riemann domain lying above the bidual \cite{DV04}. But in this paper we do not attempt a corresponding construction of $\fX''$ in general, and instead produce extensions in a more specialized framework, of Banach manifolds that arise as mapping 
 spaces. 
 
Let us start with a finite dimensional connected complex manifold $X$ and a  compact Hausdorff space $S$. The space $C(S,X)$ of
continuous maps $\fx:S\to X$ has the natural structure of a complex Banach manifold, see \cite{L04,LS07} and section 2 here. 
(When working with $X$, we will use $x,y,$ etc. to denote points of $X$, and the corresponding fraktur fonts 
$\fx,\fy,\dots$ to denote mappings into $X$.) Whatever the bidual of $C(S,X)$ should be, it must have something to do with the bidual of the Banach space $C(S)$ of continuous functions
$S\to\bC$. This bidual has been described, in slightly different ways, first by Kakutani, then by Gordon, as the space of continuous functions on a related space $T$ \cite {G66, K41}; the book \cite {D$^+$16} gives yet another variant of the construction of $T$ and of the 
isomorphism $C(S)''\approx C(T)$. This suggests that the bidual of a mapping space $\fX=C(S,X)$ should be $\fX''=C(T,X)$.
It turns out that the associations $(S,X)\mapsto C(S,X)=\fX$ and $(S,X)\mapsto C(T,X)=\fX''$ are parts of  (bi)functors between which
there is a natural transformation that embeds $\fX$ as a complex submanifold into $\fX''$. The question then would be whether bounded holomorphic functions on $\fX$ extend to $\fX''$. 

This is still not the question we will answer in this paper, though. With an application in mind, that we will discuss shortly, we rather consider
extension to a smaller manifold
\begin{equation}
B(S,X)=\{\fx:S\to X \text{ is Borel}\mid \fx(S)\subset X\text{ has compact closure}\}.
\end{equation}
(A map $\fx:S\to X$ is Borel if preimages of Borel subsets of $X$ are Borel.) The space of Borel maps in (0.1)
is also a complex Banach manifold. This is the space to which holomorphic functions 
on $C(S,X)\subset B(S,X)$ are to be extended.

It turns out that unlike in the Aron--Berner extension theorem, instead of holomorphic functions it is better to work with possibly multivalued
holomorphic functions, that is, holomorphic germs that admit analytic continuations along paths. We will define the relevant notions in section
7. For the moment we first submit that  the Aron--Berner extension provides for any holomorphic function germ $\bf f$ on $C(S,X)$ at some
$\fx_1$ a holomorphic germ $\bf f\abt$ on $B(S,X)$ (see section 5). We will denote the germ of a complex manifold $M$ at
some $p\in M$ by $(M,p)$ and, accordingly, a germ of a function on
 $M$ at $p\in M$ as ${\bf f}:(M,p)\to \bC$. 
\begin{thm}
Let $\fx_1\in C(S, X)$. If a holomorphic germ ${\bf f}:\big(C(S,X),\fx_1\big)\to\bC$ can be analytically continued along any path in $C(S,X)$ starting at $\fx_1$, and these continuations are uniformly bounded, then ${\bf f}\abt$ can be analytically
continued along any path in $B(S,X)$ starting at $\fx_1$.
\end{thm}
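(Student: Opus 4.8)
The plan is to analytically continue $\mathbf f\abt$ along a given path $\gamma\colon[0,1]\to B(S,X)$ with $\gamma(0)=\fx_1$ by subdividing $\gamma$ and extending the germ one coordinate chart at a time, the point being to drive the hypothesis on $\mathbf f$ through the Aron--Berner construction by means of the Davie--Gamelin theorem: the Aron--Berner extension of a holomorphic function bounded by a constant $M$ on a ball of radius $r$ in a Banach space is again holomorphic and bounded by $M$ on the ball of the same radius $r$ in the bidual. Since $\gamma([0,1])$ is compact in $B(S,X)$, the set $K=\overline{\bigcup_t\gamma(t)(S)}\subset X$ is compact; I would fix a finite cover of $K$ by coordinate balls, in which $TX$ is trivialized, so that every coordinate chart of $B(S,X)$ at a map with image near $K$ is modeled on an open subset of $B(S,\bC^n)$, the natural inclusion $B(S,\bC^n)\hookrightarrow C(S,\bC^n)''=C(T,\bC^n)$ being isometric. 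In the same vein $B(S,X)$ sits as a complex submanifold of the bidual manifold $C(T,X)$: every Borel $\fx\colon S\to X$ with relatively compact image lifts canonically to a continuous map $T\to X$ (the indicators of a Borel partition of $S$ subordinate to the cover become orthogonal idempotents, hence a clopen partition, in $C(T)$, and on each piece the coordinate functions extend continuously), and this embedding carries $B(S,X)$-charts into $C(T,X)$-charts. As $\mathbf f\abt$ is, as constructed in section~5, the restriction to $B(S,X)$ of the full Aron--Berner germ $\mathbf f\ABt$ on $C(T,X)$, it suffices to continue $\mathbf f\ABt$ along $\gamma$ regarded as a path in $C(T,X)$.

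As preliminaries I would first record that Aron--Berner extension commutes with analytic continuation along paths lying in $C(S,X)$: if $\mathbf f$ continues along such a path $c$ to a germ $\mathbf f_c$, then the continuation of $\mathbf f\ABt$ along $c$ in $C(T,X)$ is $(\mathbf f_c)\ABt$ --- this follows because $\mathbf g\mapsto\mathbf g\ABt$ is linear on Taylor coefficients and compatible with restriction to sub-balls, and because by hypothesis $\mathbf f$ is continued continuously in $C(S,X)$. Together with the uniform bound, the continuations $\mathbf f_c$ glue to a holomorphic function $\hat{\mathbf f}$, bounded by $M$, on a covering $p\colon\widehat M\to C(S,X)$; since $p$ is a covering, over a suitable coordinate ball about each point of $C(S,X)$ each sheet carries $\hat{\mathbf f}$ extended to the full ball with $|\hat{\mathbf f}|\le M$. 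Consequently every continuation germ of $\mathbf f$, at every point of $C(S,X)$ and along every path, is the germ of an honest holomorphic function bounded by $M$ on a full coordinate ball, of a radius bounded below uniformly for reference maps with image near $K$.

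I would then run the induction along a subdivision $0=t_0<\dots<t_m=1$. Assume $\mathbf f\ABt$ has been continued along $\gamma|_{[0,t_{j-1}]}$ to a germ at $\fx^{(j)}=\gamma(t_{j-1})$, and try to maintain the invariant that this germ is the restriction to $C(T,X)$ of the Davie--Gamelin extension of a bounded-by-$M$ continuation germ of $\mathbf f$ at some reference $\fy_j\in C(S,X)$, an extension defined and bounded by $M$ on a ball of a fixed radius $r_\ast>0$ about $\fy_j$ in suitable coordinates, with $\fx^{(j)}$ well inside that ball. Choosing the subdivision fine enough that $\gamma([t_{j-1},t_j])$ stays in the ball, the germ extends holomorphically over the whole arc, and one is reduced to re-establishing the invariant at $\fx^{(j+1)}=\gamma(t_j)$; after $m$ steps this would continue $\mathbf f\ABt$, hence $\mathbf f\abt$, along all of $\gamma$.

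The main obstacle is precisely this re-anchoring step. When $\fx^{(j+1)}$ is a genuinely Borel, non-continuous map --- equivalently, when it is an arbitrary point of $C(T,X)$ --- the ``$C(S,X)$-type'' reference maps near it no longer form a linear subspace of the model space, and, worse, a Borel map can be at positive distance from \emph{every} continuous map, so there need be no reference $\fy_{j+1}\in C(S,X)$ within a fixed radius of $\fx^{(j+1)}$ and the naive induction stalls. The way out is to drop the step-by-step re-centering and instead produce a single global object: a covering $q\colon\widehat N\to C(T,X)$ restricting to $p\colon\widehat M\to C(S,X)$ over $C(S,X)$, together with a holomorphic function $\hat{\mathbf f}\ABt$ on $\widehat N$, bounded by $M$, that extends $\hat{\mathbf f}$; lifting $\gamma$ to $\widehat N$ then furnishes the continuation, and Theorem 0.1 follows by restricting to $B(S,X)$. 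Building $\widehat N$ is natural using the (bi)functoriality of $(S,X)\mapsto C(S,X)$ and $(S,X)\mapsto C(T,X)$ stressed in the introduction, together with the compatibility of the covering $\widehat M$, of Aron--Berner extension, and of the inclusions $C(S,X)\subset B(S,X)\subset C(T,X)$ with these functors; the genuine work --- and the real content of the theorem --- is to verify that the Davie--Gamelin bound $M$, easy to control in one chart, survives the gluing of the local Aron--Berner extensions over all of $\widehat N$.
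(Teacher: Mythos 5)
You correctly isolate the central difficulty --- a Borel map can lie at positive distance from every continuous map, so one cannot keep re-anchoring local Aron--Berner extensions at reference points of $C(S,X)$ --- but your proposed resolution does not resolve it. Producing a covering $q:\widehat N\to C(T,X)$ (or over $B(S,X)$) carrying a holomorphic function bounded by $M$ that extends $\hat{\mathbf f}$ is not a tool for proving the theorem; it is a restatement of it, since the existence of such a covering with the path-lifting property is exactly the assertion that ${\bf f}\abt$ continues along every path. No mechanism is offered for building $\widehat N$, and the appeal to (bi)functoriality cannot supply one: the covering is not determined by $\widehat M\to C(S,X)$, because the monodromy genuinely changes in passing from $C(S,X)$ to $B(S,X)$ --- Example 11.1 of the paper exhibits a bounded $f$ that is single valued on $C(S,X)$ (so $\widehat M$ is trivial over the relevant component) while ${\bf f}\abt$ is multivalued on $B(S,X)$. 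So the step you describe as ``the genuine work'' is the entire content of the theorem, and it is left undone. (Your preliminary reductions are essentially fine: commutation of Aron--Berner extension with continuation along paths in $C(S,X)$ is the paper's Lemma 7.4, via Zalduendo, and the uniform bound does give, over each coordinate ball, honest bounded representatives of all continuation germs.)

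For comparison, the paper closes this gap with two ingredients absent from your sketch. First, a finite-dimensional approximation: for each Borel partition $\Pi$ of $S$ the maps constant on the pieces form submanifolds $\fB_\Pi\approx X^J$ whose union over refinements is dense in $\fB$, and a gluing lemma (Lemma 9.4c,d), resting crucially on the uniform bound via Cauchy--Lipschitz estimates, reduces continuation along arbitrary paths in $B(S,X)$ to continuation along paths that, after a short initial stretch near a continuous map, stay in some $\fB_\Pi$. Second, the key construction of Section 8: for such a path one builds a continuous family $\Psi_t$ of fiberwise biholomorphic maps from a finite-rank ball bundle $U\to S$ into $X\times\bC$, interpolating between the continuous reference map and the partition-constant path; pulling back along $\Psi_t^\circ$ places the germ on a fixed ball $\Gamma$ in the Banach space $B_A(V)$, where bounded continuation within the continuous sections yields a single bounded holomorphic function, whose Aron--Berner extension (with the bound preserved by Davie--Gamelin, and compatible with the pullback by the naturality Corollary 4.6, which uses \"Ulger regularity of $C_A(V)$) pushes forward to the desired continuation. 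Bound preservation is thus the easy part; what your proposal lacks is any device, like the partition submanifolds and the family $\Psi_t$, that converts continuation in $B(S,X)$ into a Banach-space extension problem anchored at continuous data.
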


Without the boundedness assumption the theorem would be false, already if $X=\bC$ and $S$ is either $[0,1]$, or it
consists of a convergent sequence and its limit. Interestingly, in the theorem even if the analytic continuations of $\bf f$ define a single valued 
function on $C(S,X)$, the continuations of $\bf f\abt$ may define a multi--valued function on $B(S,X)$. We will discuss this and related
examples in section 11.

Since $B(S,X)$ is connected, Theorem 0.1 has the paradoxical consequence that if the germ $\bf f$ has
uniformly bounded analytic continuations along paths in $C(S,X)$ starting at $\fx_1$, then it has {\sl canonical}
continuations to all of the possibly disconnected $C(S,X)$. 

Spaces of continuous maps are, of course, central objects in topology, while Borel maps occur rarely, if ever,
and for good reason: The topology of $B(S,X)$ is too directly related to the topology of $X$, see Lemma 10.2
(and (7.1), Lemma 9.4a). But this is precisely the reason why the connection between the two 
mapping spaces, described in Theorem 0.1, has implications about the continuous mapping spaces 
themselves:

\begin{thm}[Monodromy Theorem] 
Suppose $X$ is simply connected and $\fx_1\in C(S,X)$. Assume a 
holomorphic germ ${\bf f}:\big(C(S,X),\fx_1\big)\to\bC$ has analytic continuation along any path in $C(S,X)$ starting at $\fx_1$. If these 
continuations are uniformly bounded, then
\newline (a) they define a single valued holomorphic function on the component of $\fx_1$ in $C(S,X)$.
\newline (b) If, in addition, $X$ is compact, then this single valued function is in fact constant.
\end{thm}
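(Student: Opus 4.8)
The plan is to obtain both parts from Theorem 0.1 together with a single topological input. First I would record that $B(S,X)$ is connected (as already noted in the introduction) and, since $X$ is simply connected, that Lemma 10.2 — together with (7.1) and Lemma 9.4a — shows $B(S,X)$ is simply connected as well. Theorem 0.1 then tells us ${\bf f}\abt$ can be analytically continued along every path in $B(S,X)$ issuing from $\fx_1$, and the classical monodromy theorem, applied on the simply connected manifold $B(S,X)$, patches these continuations (which agree along homotopic paths) into a single valued holomorphic function $F\abt$ on all of $B(S,X)$ whose germ at $\fx_1$ is ${\bf f}\abt$.

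For part (a) I would restrict $F\abt$ to the complex submanifold $C(S,X)\subset B(S,X)$ and check that on the component of $\fx_1$ it represents the continuations of ${\bf f}$. Since $C(S,X)$ is a manifold, hence locally path connected, every $\fx$ in that component is joined to $\fx_1$ by a path $\gamma$ lying in $C(S,X)$. Analytic continuation commutes with restricting a germ from $B(S,X)$ to the submanifold $C(S,X)$, and the germ of $F\abt$ at $\fx_1$ restricts to ${\bf f}$ by the way ${\bf f}\abt$ was defined in section 5; therefore the continuation of ${\bf f}$ along $\gamma$ is the germ at $\fx$ of $F\abt|_{C(S,X)}$, independently of $\gamma$. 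Thus $F\abt|_{C(S,X)}$, restricted to the component of $\fx_1$, is the single valued holomorphic function asserted in (a).

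For part (b), with $X$ compact the manifold $B(S,X)$ consists of \emph{all} Borel maps $S\to X$, and it remains to show $F\abt$ is constant. For any finite partition $E_1,\dots,E_k$ of $S$ into Borel sets, the "simple maps" subordinate to it, namely $\sum_i x_i\mathbf 1_{E_i}$ with $x_i\in X$, form the image of a map $\iota\colon X^k\to B(S,X)$, and in suitable charts of section 2 this $\iota$ is $\bC$-linear followed by chart biholomorphisms, hence holomorphic. As $X$ is connected and compact, so is $X^k$, so $F\abt\circ\iota$ is constant; refining partitions (any two have a common refinement, whose simple maps contain those of the original two) shows $F\abt$ takes one and the same value $c$ on \emph{every} simple Borel map $S\to X$. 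Since simple Borel maps are dense in $B(S,X)$ — partition the compact $X$ into small Borel pieces — and $F\abt$ is continuous, $F\abt\equiv c$ on $B(S,X)$; restricting to $C(S,X)$ gives (b).

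The substantive content sits entirely in the inputs — Theorem 0.1 (the Aron--Berner extension, where the uniform boundedness hypothesis is indispensable, as the examples of section 11 show) and the topological Lemma 10.2 — which I am taking as given. With those in hand, the only points that will require care are the compatibility of analytic continuation with the restriction $B(S,X)\rightsquigarrow C(S,X)$ and with the construction of ${\bf f}\abt$, so that part (a) genuinely describes the continuations of ${\bf f}$; and, for part (b), producing enough compact complex submanifolds of $B(S,X)$. The strata of simple maps, each a power $X^k$ of the compact manifold $X$, do this precisely because the topology of $B(S,X)$ reflects that of $X$ so directly — the phenomenon highlighted in the introduction.
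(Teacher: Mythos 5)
Your proposal is correct and follows essentially the same route as the paper's proof of Theorem 10.1 (specialized to the trivial line bundle): simple connectivity of $B(S,X)$ via Lemma 10.2 plus Theorem 0.1 gives a single valued extension on $B(S,X)$, which restricts to $C(S,X)$ for (a), and for (b) constancy is obtained exactly as in the paper from the compact connected strata $\fB_\Pi\approx X^J$ of piecewise constant maps being dense (Lemma 9.4a,b).
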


We emphasize that $X$, and not as in the traditional Monodromy theorem the source manifold $C(S,X)$, is assumed to be simply connected.---Both theorems will be proved in greater generality in sections 9 and 10. 

That holomorphic functions (bounded or not) on certain mapping spaces must be constant for deeper reasons was first discovered by Dineen and Mellon \cite{DM98}. Subsequently with Szab\'o we generalized to spaces of `based' maps when $X$ is rationally connected, e.g., a
Grassmannian; if $S$ is a manifold, constancy follows even for spaces of $C^k$ maps \cite{L04, LS07}. Nonetheless, similarity with 
Theorem 0.2b is formal 
only, and the underlying mechanisms are different. That this must be so can be seen upon comparing what is covered here and there. On the one hand, the manifolds $X$ of Theorem 0.2 are vastly more general than the rationally connected manifolds of \cite{LS07}; on the other,
the requirement of boundedness and that we work with spaces of continuous, rather than $C^k$ maps, severely restricts the
scope of Theorem 0.2b, say. But this is inevitable. For general compact simply connected $X$ and a compact smooth manifold $S$ the
spaces $C^k(S,X)$ of $C^k$ maps may support nonconstant holomorphic functions when $k\ge 1$. For example, a Borel measure $\mu$ on the circle $S^1$ and a holomorphic section $\alpha$ of 
$(T^*X)^{\otimes m}$, not identically vanishing on multivectors of form 
$\xi\otimes\xi\otimes\dots\in (TX)^{\otimes m}$, induces a holomorphic function $a:C^1(S^1,X)\to\bC$,
$$
a(\fx)=\int_{S^1}\alpha(\dot\fx\otimes\dot\fx\otimes\cdots)\,d\mu,
$$
typically not even locally constant.

Related results have been proved by Aron, Galindo, Pinasco, and Zalduendo. In \cite{A$^+$16} they consider, for a compact space $S$, holomorphic functions on $C(S)$ that are invariant under the group of linear maps 
$C(S)\to C(S)$ induced by homeomorphisms $S\to S$.
Among other things they find that when $S$ is a circle or a closed disc, such functions must be constant. While their assumptions are very 
different from ours: here boundedness, there invariance, both proofs rely on the idea of Aron--Berner extension.

Contents. Section 1 discusses the notion of Borel vector bundles. These are needed to endow the space 
$B(S,X)$ with the structure of a complex Banach manifold. Sections 2 and 3 review and discuss basic properties of mapping spaces and of the Aron--Berner extension in Banach spaces.
Section 4 is about extension in local models of mapping spaces: from spaces of continuous sections of certain 
topological 
vector bundles to spaces of Borel sections. In particular, we prove a naturality property of this extension, 
Corollary 4.6. Section 5 then uses this naturality
to extend holomorphic germs ${\bf f}$ on $C(S,X)$ to holomorphic germs ${\bf f}\abt$ on $B(S,X)$. All of this is rather 
straightforward. 
Sections 6 and 7 discuss notions of holomorphic Banach bundles, flat norms, and analytic continuation. 
Theorem 7.5 generalizes Theorem 0.1 to sections of certain Banach bundles. Section 8
features a key result: The construction of a family of holomorphic maps into $X$ of  balls in finite
dimensional vector spaces.
These will be used in section 9 to reduce analytic continuation in $B(S,X)$ to analytic continuation in Banach spaces. 
The proofs of Theorems 0.1, 0.2, generalized, are given in sections 9 and 10. Section 11 contains examples that show that
the main theorems are optimal in certain ways.

During the preparation of this paper I have profited from discussion with Pablo Galindo.

Our basic reference to complex analysis in Banach spaces is Mujica's book \cite{M86}, and to infinite dimensional complex manifolds, 
\cite[Section 2]{L98}. The infinite dimensional complex manifolds that we will encounter are what \cite{L98} calls rectifiable, and modeled on
complex Banach spaces; all this means is that they are locally biholomorphic to open subsets of Banach spaces. We denote the set of holomorphic maps between complex manifolds $M,N$ by $\cO(M,N)$,
and by $\cO(M)$ when $N=\bC$.

\section{Borel vector bundles}

While continuous mapping spaces $C(S,X)$ are Banach manifolds modeled on spaces of continuous sections of topological
vector bundles, the local models of spaces $B(S,X)$ of Borel maps are spaces of sections of what we call Borel vector
bundles. In this section we develop the notion in the generality we will need. Related but different notions, such as measurable
fields and bundles of Banach, especially Hilbert, spaces have been introduced before.

Let $S$ be a topological space and $B(S)$ the algebra of bounded Borel functions $S\to\bC$. Consider a map $\pi:V\to S$
of sets, with each $V_s=\pi^{-1}s$, $s\in S$, endowed with the structure of a finite dimensional complex vector space. The
space of all sections of $\pi$, with pointwise addition and multiplication by bounded Borel functions, forms a $B(S)$--module.
By a Borel field of (finite dimensional) vector spaces we mean the datum of such $\pi:V\to S$ plus a $B(S)$--submodule
$\Xi$ of the module of all sections. As is customary, instead of $(\pi:V\to S,\Xi)$, 
often we just write $\pi:V\to S$, or even just $V$, for a Borel field of vector spaces, and $\Xi$ is only implied.
 If $\dim V_s$ is the same for all $s\in S$, we call this number the rank of $V$.

Suppose $T\subset S$ is Borel. We write $V|T$ first for $\pi^{-1}T$, but also for the Borel field 
$(\pi^{-1}T\to T,\Xi|T)$,
where $\Xi|T=\{\xi|T:\xi\in\Xi\}$. This is the restriction of $V$ to $T$.

A homomorphism of  Borel fields of vector spaces $(V\to S,\Xi)$, $(W\to S,H)$ is a map
$F:V\to W$, linear between $V_s$ and
$W_s$, $s\in S$, such that $F\circ\xi\in H$ for all $\xi\in\Xi$. A homomorphism is an isomorphism if it has 
an inverse, which
is also a homomorphism.

Any finite dimensional complex vector space $A$ defines a trivial Borel field of vector spaces $S\times A\to S$ and $\Xi$
consisting of sections that correspond to bounded Borel functions $S\to A$. Denote the space of linear maps between
vector spaces $A,B$ by $\Hom(A,B)$. Any bounded Borel map $f:S\to\Hom(A,B)$ induces a homorphism 
\[
F:S\times A\ni(s,a)\mapsto \big(s,f(s)a\big)\in S\times B
\]
of trivial Borel fields of vector spaces, and any homomorphism arises in this way.
\begin{defn} 
A Borel vector bundle is a Borel field of vector spaces that is isomorphic to a trivial Borel field of vector spaces.
\end{defn}
Admittedly, this does not have the ring of ``vector bundle'', that should be trivial only ``locally''. Still, if $S$ is the finite
union of Borel sets $S_j$, and for a Borel field of vector spaces $V\to S$ the restrictions $V|S_j$ are isomorphic to trivial
fields of the same rank, then $V$ itself is isomorphic to a trivial Borel field of vector spaces.

Any topological vector bundle $\pi:V\to S$ over a compact base, with equidimensional fibers, determines a Borel vector field
$(\pi:V\to S,\Xi)$: a section $\xi$ of $\pi$ is in $\Xi$ if in any topological trivialization over some compact $K\subset S$,
$V|K\approx K\times A$, it corresponds to a bounded Borel function $K\to A$.---Further examples of Borel vector
bundles can be obtained by pullback. Suppose $(\pi:V\to S,\Xi)$ is a Borel vector bundle, and $g:T\to S$ is a Borel
map of topological spaces. Then $g^*V=\coprod_{t\in T}V_{g(t)}$, with the obvious projection $\rho:g^*V\to T$,
and the
$B(T)$--submodule $H$ generated by sections of form $\eta(t)=\xi\big(g(t)\big)$, $\xi\in\Xi$, defines the pullback 
 Borel vector bundle $(g^*V\to T,H)$, also denoted $g^*V$. (Here we identify  fibers of the pullback of a
 bundle $V$ with the fibers of $V$ itself, something we will do elsewhere in the paper as well.)
 Equivalently, $H$ consists of those sections $\eta$ of $\rho$
for which the map $T\ni t\mapsto \eta(t)\in V_{g(t)}\subset V$ is bounded and Borel, these properties measured using any
trivialization $V\approx S\times A$. 

A norm on a trivial  Borel vector bundle $S\times A\to S$  is a Borel function $|\,\,|:S\times A\to[0,\infty)$ whose restrictions
to the fibers $\{s\}\times A$ define norms $|\,\,|_s$ on $A$, and there is a $c\in\bR$ such that
$|\,\,|_s\le c|\,\,|_t$ for all $s,t\in S$. Any isomorphism $F:S\times B\to S\times A$ pulls
back the norm $|\,\,|$ to a norm $|\,\,|\circ F$ on $S\times B\to S$. Therefore we can define the notion of a norm on a general
Borel vector bundle $V\to S$ as a function $V\to[0,\infty)$ that, under some (or an arbitrary) isomorphism
$V\approx S\times A$, corresponds to a norm on $S\times A\to S$. Such a norm is measurable with respect to the
Borel $\sigma$--algebra of $V$, defined as the pullback of the Borel $\sigma$--algebra
of $S\times A$ by any isomorphism $V\approx S\times A$. 

Given a Borel vector bundle $(V\to S,\Xi)$, it will be convenient to denote $\Xi$ 
by\footnote{That we use similar notation 
$B(S)$, $B(V)$---and later $C(S)$, $C(V)$---for rather different objects should not confuse the reader. In the former $S$ is a topological space and $B(S)$ the space bounded Borel functions on it; $V$ is Borel vector
bundle over $S$. Similarly, when $S$ is a compact Hausdorff space and $V\to S$ a topological
vector bundle, $C(S)$ is the space of continuous functions on $S$, and $C(V)$ is the space of continuous sections.} $B(V)$, and refer to it as the space of
bounded Borel sections of $V$. A norm $|\,\,|$ on $V$ turns $B(V)$ into a Banach space, with norm $||\xi||=\sup_S |\xi|$.
The topology on $B(V)$ is independent of the choice of $|\,\,|$. 

We finish this section by relating analysis on a Borel vector bundle $\pi:V\to S$ and on the Banach(able) space $B(V)$.
Given $U\subset V$, a norm $|\,\,|=p$ on $V$, and  $\var>0$, set
\[
U_{p,\var}=\{u\in U: p(u)<1/\var\text{ and }U\text{ contains all $v\in V_{\pi u}$ with }p(u-v)<\var\}.
\]
If $q$ is another norm on $V$, then $U_{p,\var}\subset U_{q,c\var}$ with some
$c>0$ independent of $\var$. Define 
\begin{equation} 
\hat U=\text{int}\,\{\xi\in B(V):\xi(S)\subset U\}=\bigcup_{\var>0}\{\xi\in B(V):\xi(S)\subset U_{p,\var}\}.
\end{equation}

Consider another Borel vector bundle $W\to S$ and a fiberwise $H:U\to W$, i.e., $H(U_s)\subset W_s$,
where $U_s=U\cap V_s$. Pick norms
$p,q$ on $V,W$.
\begin{defn}
We say that $H$ is of bounded type if $q$ is bounded on $H(U_{p,\var})$ for all $\var>0$.
\end{defn}
Clearly, bounded type are independent of the choice of $p,q$
\begin{lem}
Suppose the fibers $U_s\subset V_s$ are open. If  $H:U\to W$ is Borel, of bounded type, and 
$H|U_s$ are holomorphic for $s\in S$,  then
\[
H^\circ:\hat U\ni\xi\mapsto H\circ\xi\in B(W)
\]
is holomorphic.
\end{lem}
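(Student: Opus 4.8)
The plan is to verify that $H^\circ$ is a well--defined, locally bounded, G\^ateaux holomorphic map from the open set $\hat U\subset B(V)$ into $B(W)$, and then to invoke the standard characterization of holomorphy in Banach spaces: a locally bounded G\^ateaux holomorphic map is holomorphic, see \cite{M86}. First I would record that $\hat U$ is open by its very definition (1.1), and that $H^\circ$ does land in $B(W)$: given $\xi\in\hat U$, choose $\var>0$ with $\xi(S)\subset U_{p,\var}$; then $H\circ\xi$ is bounded because $q$ is bounded on $H(U_{p,\var})$ by the bounded type hypothesis, and $H\circ\xi$ is Borel as the composite of the Borel section $\xi$ with the Borel fiberwise map $H$.

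Next I would establish an elementary estimate on the sets $U_{p,\var}$. One checks straight from the definition that, if $\xi_0(S)\subset U_{p,\var}$ with $\var\le 1$ --- which is no loss of generality, since $U_{p,\var}\subset U_{p,\var'}$ whenever $\var'\le\var$ --- and $\sup_S p(\xi-\xi_0)<\var/2$, then $\xi(S)\subset U_{p,\var/2}$; likewise $(\xi_0+\lambda\eta)(S)\subset U_{p,\var/2}$ once $|\lambda|<\var/(2\sup_S p(\eta))$. Combined with bounded type this produces a $B(V)$--neighbourhood of any $\xi_0\in\hat U$ on which $H^\circ$ is bounded by $\sup_{H(U_{p,\var/2})}q<\infty$, so $H^\circ$ is locally bounded.

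For G\^ateaux holomorphy, fix $\xi_0\in\hat U$ and $\eta\in B(V)$, the case $\eta=0$ being trivial. By the estimate above, $g(\lambda):=H\circ(\xi_0+\lambda\eta)\in B(W)$ is defined and bounded, say by $M$, for $\lambda$ in a disc $D_r$ about $0$. For each fixed $s\in S$ the function $\lambda\mapsto g(\lambda)(s)=(H|U_s)(\xi_0(s)+\lambda\eta(s))$ is holomorphic on $D_r$, so it has a Taylor expansion $\sum_n a_n(s)\lambda^n$ with $a_n(s)\in W_s$ expressed by the Cauchy integral over a circle $|\mu|=\rho<r$. The Cauchy estimates give $|a_n(s)|_q\le M/\rho^n$ uniformly in $s$, so each $a_n$ is a bounded section of $W$; and $a_n$ is Borel because its Cauchy integral is a pointwise (in $s$) limit of Riemann sums, each a finite linear combination of the Borel sections $s\mapsto H(\xi_0(s)+\mu\eta(s))$. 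Hence $a_n\in B(W)$ with $\|a_n\|_q\le M/r^n$, so $\sum_n a_n\lambda^n$ converges in $B(W)$ for $|\lambda|<r$; evaluating at each $s$ identifies its sum with $g(\lambda)$. Thus $\lambda\mapsto g(\lambda)$ is a convergent power series with coefficients in $B(W)$, in particular holomorphic, so $H^\circ$ is G\^ateaux holomorphic, and the lemma follows.

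I expect the G\^ateaux holomorphy step to be the main obstacle --- concretely, the bookkeeping needed to see that the fiberwise Taylor coefficients $a_n$ really assemble into bounded Borel sections of $W$ and that the resulting $B(W)$--valued power series reproduces $g$. This is the point at which both the bounded type hypothesis (supplying the uniform bounds behind the uniform Cauchy estimates) and the precise definition of the Borel structure on the bundles are used in an essential way; the inclusions among the sets $U_{p,\var}$ are routine but must be handled with care, as these sets need not be fiberwise convex.
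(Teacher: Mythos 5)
Your proof is correct, but it takes a genuinely different route from the paper's. The paper verifies holomorphy directly: after reducing to a trivialization $V=S\times A$, $W=S\times B$, it uses Cauchy's formula along one--dimensional slices to show that the fiberwise first and second partials of $H$ along $A$ are Borel, bounded on each $U_{p,\var}$, and of bounded type; the first--order bounds give continuity of $H^\circ$, the second--order bounds give (via Taylor's theorem) that the directional derivative $dH^\circ(\xi;\eta)$ exists and is computed fiberwise as $d_AH(\xi(s);\eta(s))$; and then continuity in $(\xi,\eta)$ and complex linearity in $\eta$ of that directional derivative are checked by repeating the argument for $d_AH$. You instead invoke the classical criterion from \cite{M86} that a locally bounded G\^ateaux holomorphic map between Banach spaces is holomorphic, and prove G\^ateaux holomorphy by summing the full fiberwise Taylor series $\sum_n a_n(s)\lambda^n$ of $\lambda\mapsto H(\xi_0(s)+\lambda\eta(s))$, showing that each coefficient assembles into a bounded Borel section $a_n\in B(W)$ (Cauchy estimates for boundedness, Riemann--sum approximation of the Cauchy integral for Borel measurability) and that the resulting $B(W)$--valued series converges on a disc and reproduces $H^\circ(\xi_0+\lambda\eta)$. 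Your approach avoids explicit remainder estimates and never needs to exhibit the derivative of $H^\circ$; the paper's approach is more hands--on and in particular produces the explicit fiberwise formula for $dH^\circ$, which is then the object whose continuity it checks. Both the estimates on $U_{p,\var}$ (the $\var\le1$ normalization so that $p(u)<1/\var+\var/2<2/\var$, and the triangle inequality for the $\var$--ball condition) and the measurability argument for the $a_n$ are correct as you state them, and the reduction "$\eta=0$ trivial / $\eta\ne0$ divide by $\sup_S p(\eta)$" is handled cleanly.
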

\begin{proof}
That $H^\circ$ maps into $B(W)$ follows from the definition in a straightforward way. To prove holomorphy,
we can assume that $V=S\times A$,
$W=S\times B$ are trivial.
We will write $d_AH$ for the differential of $H$ along $A$. Cauchy's formula along 
one dimensional slices in $\{s\}\times A$ gives that partials of 
$H$ along $A$ are Borel and bounded on $U_{p,\var}$. Using this for first derivatives gives that $H^\circ$
is continuous; for second derivatives, that  the directional derivatives 
\[
dH^\circ(\xi;\eta)=\lim_{\bR\ni t\to 0}\big(H^\circ(\xi+t\eta)-H^\circ(\xi)\big)/t,
\]
 exist for any $\xi\in \hat U$ and $\eta\in B(V)$, and $dH^\circ(\xi;\eta)(s)$ can be computed as 
$d_AH\big(\xi(s);\eta(s)\big)$. Now $S\times TA\to S$ and
$S\times TB\to S$ have a natural structure of a Borel vector bundle. Since the fiberwise tangent bundle
$T_AU=\coprod_{s\in S}T\big(U\cap(\{s\}\times A)\big)$ is a Borel subset of $S\times TA$,
$dH_A:T_AU\to S\times TW$ 
is Borel, of bounded type, and holomorphic along the fibers, $dH^\circ(\xi;\eta)$ is 
a continuous function of $\xi,\eta$. It is also complex linear  in $\eta$, which proves that $H^\circ$ is holomorphic.
 \end{proof}
 
\section{The mapping spaces}
 
Henceforward we work with a fixed finite dimensional, connected complex manifold $X$ and a compact Hausdorff space $S$. Write $TX\to X$ for the {\sl real} tangent bundle, on which the complex
structure of $X$ induces the structure of a holomorphic vector bundle. As in the Introduction, we let
$$
B(S,X)=\{\fx:S\to X \text{ is Borel}\mid \fx(S)\subset X\text{ has compact closure}\},
$$
and endow it with the uniform topology. Basic neighborhoods of $\fx\in B(S,X)$ are determined by 
open neighborhoods $D\subset X\times X$ of the diagonal $\{(x,x):x\in X\}$, the corresponding 
open neighborhood of $\fx$ being
\begin{equation} 
D(\fx)=\{\fy\in B(S,X):(\fx\times\fy)(S)\text{ has compact closure in } D\}.
\end{equation}
Here $(\fx\times \fy)(s)=\big(\fx(s),\fy(s)\big)$.

This topology is metrizable. Take any metric $d$ on $X$ that induces its topology; then the metric
\begin{equation}
d_S(\fx,\fy)=\sup\{d\big(\fx(s),\fy(s)\big):s\in S\}
\end{equation}
induces the uniform topology of $B(S,X)$. (The $\sup$ in (2.2) is finite, because $\fx(S),\fy(S)$ are contained in some compact
$K\subset X$ and $d$ is continuous on $K\times K$.) The topology restricts to the compact--open topology on the subspace
$C(S,X)$ of continuous maps.

The definition of a manifold structure on $B(S,X)$ involves the induced Borel vector bundles
\[
\fx^*TX=\coprod_{s\in S} T_{\fx(s)}X\to S,\qquad \fx\in B(S,X),
\]
see section 1, and their spaces $B(\fx^*TX)$ of bounded Borel sections. The material in section 1
allows to treat topological vector bundles only over compact bases as Borel vector bundles, and
then pull them back by Borel maps. Here in general we have to take any  compact  
$K\subset X$ containing $\fx(S)$; then
$TX|K$ determines a Borel vector bundle and can be pulled back by $\fx$ to produce a Borel 
vector bundle, that we denoted $\fx^*TX$.

If $D\subset X\times X$, $F$ is a function defined on $D$, and $x\in X$, we let 
\begin{equation}
D^x=\{y\in X: (x,y)\in D\} \quad\text{and}\quad F^x=F(x,\cdot).
\end{equation}
\begin{lem}
There are a neighborhood $D\subset X\times X$ of the diagonal and a $C^\infty$ diffeomorphism $F$ between $D$ and a neighborhood of the zero section in $TX$ with the following properties:
\newline (a) $F^x$ maps $D^x$ biholomorphically on a convex open subset of $T_xX$;
\newline (b) $F^x(x)\in T_xX$ is the zero vector;
\newline (c) Identifying the tangent spaces of the vector space $T_xX$ with $T_xX$ itself, the differential 
$F_*^x|_x: T_xD^x=T_xX\to T_xX$ is the identity.

Moreover, if $X$ is relatively compact and open in another complex manifold $M$, then $F$ can be
chosen real analytic.
\end{lem}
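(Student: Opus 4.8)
The plan is to build, for every $x\in X$, a holomorphic embedding $F^x$ of a neighbourhood of $x$ in $X$ onto an open subset of the complex vector space $T_xX$, with $F^x(x)=0$ and $F^x_*|_x=\id$, depending $C^\infty$ (resp.\ real analytically) on $x$; then $D$ and $F$ drop out by routine tubular--neighbourhood bookkeeping. Indeed, the resulting map $F\colon(x,y)\mapsto F^x(y)$, defined on some open set whose intersection with the diagonal is an open neighbourhood of it, is fibred over the first projection since $F(x,y)\in T_xX$; it is therefore a local diffeomorphism near the diagonal (its fibre differential is $\id$ there and the composition with $TX\to X$ is a submersion), and, being injective on each fibre, injective near the diagonal. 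Shrinking, and replacing the image by $\{v\in TX:|v|<r(\pi v)\}$ for a $C^\infty$ (resp.\ real analytic) norm $|\ |$ on $TX$ and a suitable positive function $r$, produces $D$ and the diffeomorphism $F$; (b) and (c) then hold by construction, $F^x(D^x)=F(D)\cap T_xX$ is a ball, hence the convex open set required in (a), and $F^x$ is biholomorphic onto it because an injective holomorphic map is biholomorphic onto its image. So everything rests on producing the family $\{F^x\}$.

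For the $C^\infty$ statement I would patch affine charts. Take a locally finite cover $\{U_j\}$ of $X$ by domains of holomorphic charts $\varphi_j\colon U_j\to\bC^n$, and for $x,y\in U_j$ set
\[
h^j(x,y)=\big(\varphi_{j*}|_x\big)^{-1}\big(\varphi_j(y)-\varphi_j(x)\big)\in T_xX .
\]
This is \emph{holomorphic in $(x,y)$ jointly}, vanishes on the diagonal, and has $\partial_yh^j|_{(x,x)}=\id$. Choosing a $C^\infty$ partition of unity $\{\rho_j\}$ subordinate to $\{U_j\}$, put $F^x(y)=\sum_j\rho_j(x)\,h^j(x,y)$. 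For fixed $x$ this is a finite combination with constant coefficients of maps holomorphic in $y$, hence holomorphic; it vanishes at $y=x$; $\partial_yF^x|_x=\big(\sum_j\rho_j(x)\big)\id=\id$; the set of admissible $(x,y)$ is open and contains the diagonal by local finiteness; and $F$ is $C^\infty$ since the $h^j$ are. This gives the family, and with it the first part.

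For the real analytic refinement no partition of unity is available, so I would complexify a Riemannian exponential. Using relative compactness, fix a real analytic Riemannian metric $g$ on a neighbourhood $\Omega$ of $\overline X$ in $M$, and let $E(x,y)=\exp_x^{-1}(y)\in T_xM$, real analytic for $(x,y)$ near the diagonal of $\Omega$, with $E(x,x)=0$ and $E^x_*|_x=\id$ (the differential of $\exp_x$ at $0$ being the identity). The essential point is that this differential, being $\id$, is $\bC$--linear: $E^x$ is \emph{holomorphic to first order} at $x$, though not holomorphic elsewhere. Real analyticity allows complexification in each variable: $E$ extends to a map $\widehat E(x_1,x_2,y_1,y_2)\in T_{x_1}M$, holomorphic on a neighbourhood of $\{(x,\bar x,x,\bar x):x\in\overline X\}$ in $M\times\overline M\times M\times\overline M$ (with $\overline M$ the conjugate complex manifold and $\bar x\in\overline M$ the image of $x$), agreeing with $E$ when $x_2=\bar x_1$ and $y_2=\bar y_1$. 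Now set
\[
F^x(y)=\widehat E(x,\bar x,\,y,\,\bar x)\in T_xX .
\]
For fixed $x$ only $y_1=y$ varies, holomorphically, so $F^x$ is holomorphic; $F^x(x)=E(x,x)=0$; and in a holomorphic chart centred at $x$ one sees that freezing the fourth slot at $\bar x$ discards precisely the conjugate--linear part of $E^x_*|_x$, which is $0$, so $F^x_*|_x=\id$. Since $x\mapsto\bar x$ is real analytic, $F$ is real analytic in $(x,y)$; compactness of $\overline X$ makes all the above neighbourhoods uniform, so $F$ is defined near the whole diagonal of $X\times X$, and one concludes as in the first paragraph with a real analytic norm on $TX$ built from $g$.

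I expect the real analytic case to be the main obstacle: one must supply holomorphy in $y$ without a partition of unity, and the device above works only because the Riemannian exponential, though not holomorphic, has $\bC$--linear differential at its centre, so that the complexified map restricted back along $y_2=\bar x$ loses its antiholomorphic dependence to first order. Carrying out this first--order computation invariantly, and checking that the exponential charts and their complexifications can be chosen uniformly over the compact $\overline X$ so that $F$ genuinely lives on a neighbourhood of the diagonal, are the points that demand care; everything else is routine.
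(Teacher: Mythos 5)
Your proof is correct, but it takes a genuinely different route from the paper's, especially for the real analytic refinement. The paper disposes of the first ($C^\infty$) part by citing \cite[Lemma 1.1]{LS07} and of the second by citing \cite[Lemma 2.1]{L04}, a general tubular-neighbourhood lemma relating a neighbourhood of the graph of a real analytic map $\fx:V\to M$ in $V\times M$ to a neighbourhood of the zero section in $\fx^*TM$; this is applied after embedding $M$ in some $\bR^N$ by Grauert's theorem and taking $V$ a compact sublevel set. Your $C^\infty$ construction (patching the affine straightenings $h^j(x,y)=(\varphi_{j*}|_x)^{-1}(\varphi_j(y)-\varphi_j(x))$ with a partition of unity in $x$ only, so that holomorphy in $y$ is preserved) is the standard device and presumably close in spirit to \cite{LS07}. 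Your real analytic construction, however, is different in kind: rather than invoking a black-box tubular-neighbourhood lemma in an ambient $\bR^N$, you complexify the inverse Riemannian exponential $E(x,y)=\exp_x^{-1}(y)$ and restore holomorphy in $y$ by freezing the antiholomorphic slot $y_2$ at $\bar x$; the key observation that $E^x_*|_x=\id$ is $\bC$-linear, so that $\partial\widehat E/\partial y_2$ vanishes on the diagonal and hence $F^x_*|_x=\id$, is clean and correct. What your route buys is a self-contained, intrinsic construction that makes the mechanism visible (it isolates exactly where the antiholomorphic dependence is discarded); what the paper's route buys is brevity and reuse of an existing general lemma.

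Two small points you should make explicit. First, the existence of a \emph{real analytic} Riemannian metric on a neighbourhood of $\overline X$ is itself not free: the usual way to get one is to embed $M$ in some $\bR^N$ by Grauert's theorem and pull back the Euclidean metric, so your proof also ultimately relies on Grauert, just at a different spot than the paper's. (Alternatively, one may take a real analytic Hermitian metric, but that requires the same kind of approximation/embedding input.) Second, the ``routine tubular-neighbourhood bookkeeping'' in your first paragraph compresses a genuine, if standard, shrinking argument: $F^x$ is only known to be injective on an $x$-dependent neighbourhood of $x$, and one must verify that these neighbourhoods can be chosen so that $D=\coprod_x\{x\}\times D^x$ is open and $F|_D$ is globally injective; the usual compactness/uniform-ball argument does this, but it deserves a sentence. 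With those caveats noted, the argument is sound and fills in, from scratch, what the paper delegates to \cite{LS07,L04}.
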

The inverse of $F$ is thus as good a substitute for a holomorphic exponential map $TX\to X\times X$ as it 
gets. Denoting by pr$_2:X\times X\to X$ projection on the second factor, accordingly we define
\begin{equation} 
\exp=\text{pr}_2\circ F^{-1}:TX\to X,
\end{equation}
whose restriction to $T_xX$ is a biholomorphism $F^x(D^x)\to D^x$, inverse to $F^x$. 

\begin{proof} 
The first part is \cite[Lemma 1.1]{LS07}. The second part is a special case of \cite[Lemma 2.1]{L04}, 
that (when $k=\omega$) takes a compact real analytic manifold $V$, possibly with boundary, a real analytic map
$\fx:V\to M$, and relates neighborhoods of the graph of $\fx$ in $V\times M$ and of the zero section
in the induced bundle $\fx^*TM$.---Note, though, the different use of $X$ here and there.---To apply 
that result, following Grauert \cite{G58} embed our $M$ in
some $\bR^N$ as a closed real analytic submanifold, choose $V\subset M$ a suitable sublevel
set of the Euclidean norm function, and $\fx$ the embedding of $V$ in $M$.
\end{proof}

In fact, a minor modification of the proof in \cite{L04} gives that $F$ can be chosen real analytic for {\sl any}
$X$, whether $M$ as above exists or not. 
 
We are now in the position to define holomorphic charts that turn $B(S,X)$ into a complex manifold. Given $D,F$ as in the lemma, 
a coordinate neighborhood will be $D(\fx)$ of (2.1). The map
\begin{equation}
\varphi_\fx:D(\fx)\ni\fz\mapsto F\circ(\fx\times\fz)\in B(\fx^*TX)
\end{equation}
is a homeomorphism on a neighborhood of the zero section in the Banach space $B(\fx^*TX)$. Since the inverse of $\varphi_\fx$ is given by $\xi\mapsto \exp\circ\xi$,
to show that the local coordinates (2.5) are holomorphically related we need to check that for $\fx,\fy\in B(S,X)$ the map
\begin{equation}
\varphi_\fy\circ\varphi_\fx^{-1}: \xi\mapsto F\circ\big(\fy\times(\exp\circ\xi)\big)
\end{equation}
is holomorphic wherever defined on $B(\fx^*TX)$. This is a special case of the following result, that we will need later on as well.
\begin{lem}
Let $Y$ be another finite dimensional complex manifold, $u\subset S\times X$ and $U\subset S\times TX$ open, 
\[
\psi:u\to Y\quad\text{and}\quad \Psi:U\to TY
\]
continuous. Assume that $\Psi$ holomorphically maps the fibers $U\cap(\{s\}\times T_xX)$ to 
$T_{\psi(s,x)}Y$. Fix $\fx\in B(S,X)$ such that the closure of $(\id_S\times\fx)(S)$ is in $u$, and 
let $\fU\subset B(\fx^*TX)$ consist of
those sections $\xi$ for which the closure of $(\id_S\times\xi)(S)$ is contained in $U$. Then 
$\fy=\psi\circ(\id_S\times\fx)\in B(S,Y)$; for $\xi\in \fU$ 
\begin{equation}
\eta=\Psi\circ(\id_S\times\xi)
\end{equation}
is a bounded Borel section of $\fy^*TY$; and the map
\[
\Psi^\circ:\fU\ni\xi\mapsto \Psi\circ(\id_S\times\xi)\in B(\fy^* TY)
\]
is holomorphic.
\end{lem}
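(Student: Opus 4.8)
The plan is to reduce the holomorphy assertion to Lemma 1.3. The preliminary claims are routine. Since $\fx$ is Borel and $\overline{(\id_S\times\fx)(S)}$ is a compact subset of $u$, the composite $\fy=\psi\circ(\id_S\times\fx)$ is Borel and $\fy(S)$ has compact closure $\psi\big(\overline{(\id_S\times\fx)(S)}\big)$; hence $\fy\in B(S,Y)$ and $\fy^*TY$ is a well-defined Borel vector bundle. For $\xi\in\fU$ the set $(\id_S\times\xi)(S)$ has compact closure in $U$, so $\eta=\Psi\circ(\id_S\times\xi)$ is Borel with relatively compact image in $TY$, and $\eta(s)=\Psi\big(s,\xi(s)\big)\in T_{\psi(s,\fx(s))}Y=T_{\fy(s)}Y$ by the fiberwise hypothesis; thus $\eta$ is a bounded Borel section of $\fy^*TY$, and $\Psi^\circ$ maps $\fU$ into $B(\fy^*TY)$.

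For holomorphy it suffices to work near an arbitrary $\xi_0\in\fU$, and here Lemma 1.3 enters. Recall that $\fx^*TX=\coprod_{s}T_{\fx(s)}X$ sits naturally as a Borel subset of $S\times TX$, in fact of $S\times(TX|K)$ for any compact $K\supset\fx(S)$; fix on $\fx^*TX$ a norm $|\,\,|$ restricted from a continuous norm on $TX|K$. For any $\xi\in B(\fx^*TX)$ the image $\xi(S)$ then has compact closure in the locally compact space $S\times(TX|K)$; in particular $L:=\overline{\xi_0(S)}$ is a compact subset of $U$. Choose $\var_0>0$ so small that
\[
U^\sharp=\big\{(s,v)\in\fx^*TX:\ |v-\xi_0(s)|<\var_0\big\}
\]
has compact closure contained in $U$ — possible because $L\subset U$ is compact and $TX|K$ is locally compact. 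Note $U^\sharp$ is a Borel subset of $\fx^*TX$ with open fibers $U^\sharp_s\subset(\fx^*TX)_s$. Now $H:=\Psi|_{U^\sharp}$, regarded via $\fy^*TY\hookrightarrow S\times TY$ as the fiber-preserving map $(s,v)\mapsto\big(s,\Psi(s,v)\big)$, is Borel (being the restriction of the continuous $\Psi$), holomorphic on each fiber $U^\sharp_s$, and bounded on $U^\sharp$ (since $\Psi$ is continuous on $U$ and $\overline{U^\sharp}\subset U$ is compact) — hence of bounded type. By Lemma 1.3, $H^\circ:\widehat{U^\sharp}\to B(\fy^*TY)$ is holomorphic. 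Finally, any $\xi\in B(\fx^*TX)$ with $\|\xi-\xi_0\|<\var_0$ satisfies $\xi(s)\in T_{\fx(s)}X$ and $|\xi(s)-\xi_0(s)|\le\|\xi-\xi_0\|<\var_0$, so $\xi(S)\subset U^\sharp$ and $\overline{(\id_S\times\xi)(S)}\subset\overline{U^\sharp}\subset U$; hence this ball lies in $\widehat{U^\sharp}\cap\fU$, and on it $H^\circ(\xi)=H\circ\xi=\Psi\circ(\id_S\times\xi)=\Psi^\circ(\xi)$. Thus $\Psi^\circ$ coincides near $\xi_0$ with the holomorphic map $H^\circ$; as $\xi_0\in\fU$ was arbitrary, and the last estimate also shows that $\fU$ is open, $\Psi^\circ$ is holomorphic.

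The content is bookkeeping rather than a new idea, but two points want attention. First, Lemma 1.3 cannot be applied globally: in general $\fU\ne\widehat{U\cap\fx^*TX}$, and $\Psi$ need not be of bounded type on $U\cap\fx^*TX$ (the closure of that set in $S\times TX$ can meet $\partial U$ because $\fx$ is merely Borel), so one must localize to the relatively compact tube $U^\sharp$ around the graph of $\xi_0$. Second, one must keep the two ``fibrations'' of $S\times TX$ distinct — the projection onto $S$, over which the bundles $\fx^*TX,\fy^*TY$ live, and the bundle projection onto $X$, along whose fibers $\Psi$ is holomorphic — and verify carefully that $U^\sharp$ is genuinely a Borel set with open fibers and relatively compact closure in $U$, and that the Banach norm on $B(\fx^*TX)$ dominates fiberwise distances as used above. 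I expect this verification, together with the identification of the pulled-back Borel bundle inside $S\times TX$, to be the only real friction.
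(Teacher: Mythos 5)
Your proof is correct, but it takes the route the paper explicitly declines: the paper remarks that the lemma ``can be reduced to Lemma 1.3, but it is easier to write out a direct proof,'' and then argues directly---continuity of $\Psi^\circ$ from uniform continuity of $\Psi$ on a compact neighborhood $K\subset U$ of $\overline{(\id_S\times\xi_0)(S)}$, and complex differentiability from Cauchy estimates on a smaller compact $L\subset\mathrm{int}\,K$ together with Taylor's formula, exactly as in the proof of Lemma 1.3. You instead carry out the reduction, and your key extra step is the right one: since $\Psi$ need not be of bounded type on $U\cap\fx^*TX$ and $\fU$ is not of the form $\hat U$ for a single fiberwise-open set, you localize to the fiberwise $\var_0$--tube $U^\sharp$ around the graph of $\xi_0$, whose closure is compact and contained in $U$ (this uses that the norm-tube around the compact graph-closure is compact, which your local-compactness remark should be expanded to), so that $\Psi|U^\sharp$ is Borel, fiberwise holomorphic and bounded, Lemma 1.3 applies, and $H^\circ$ agrees with $\Psi^\circ$ on a ball about $\xi_0$ lying in $\hat{U^\sharp}\cap\fU$. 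What your route buys is that the Cauchy-estimate/Taylor argument is not repeated; what it costs is the measurability bookkeeping you flag (compatibility of the Borel structure of the pulled-back bundle with that induced from $S\times TX$), which the paper's direct proof avoids. The reduction is entirely in the spirit of the paper: the proof of Lemma 9.2 performs the same maneuver, feeding a map defined on a Borel subset of a bundle into Lemma 1.3.
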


It is of course possible that $\fU$ is empty, in which case the last statement is, well, vacuous.---The holomorphy of the map
(2.6) follows from Lemma 2.2 if we let $Y=X$, $\psi:S\times X\to X$ the projection, and 
$\Psi(s,v)=F\big(\pi v,\exp(v)\big)$, independently of $s$. 

\begin{proof}[Proof of Lemma 2.2] The lemma can be reduced to Lemma 1.3, but it is easier to
write out a direct proof, quite similar to the proof of Lemma 1.3.
That $\fy,\eta$ are Borel follows because the composition of Borel functions is Borel; they are 
also bounded, because $\psi,\Psi$ are bounded on the relatively compact sets $(\id_S\times\fx)(S)$, respectively, 
$(\id_S\times\xi)(S)$. Next, the  map $\Psi^\circ$ is continuous at any $\xi_0\in\fU$. This follows from the uniform continuity of $\Psi$ over a compact neighborhood $K\subset U$ of the closure of $(\id_S\times\xi_0)(S)$. 

To prove $\Psi^\circ$ is complex differentiable 
at $\xi_0$, we use the same $K$ and another compact neighborhood $L\subset\text{int}\, K$. Cauchy estimates then give that partial derivatives of $\Psi$ along the fibers of $TX$ are bounded on $L$. By Taylor's formula this implies that, denoting by $d_2\Psi$
the differential of $\Psi$ along the manifolds $\{s\}\times T_xX$, the derivative of $\Psi^\circ$
at $\xi_0$ in the direction $\xi\in B(\fx^*TX)$ 
\[
d\Psi^\circ(\xi_0;\xi)=\lim_{\bC\ni t\to 0}\big(\Psi^\circ(\xi_0+t\xi)-\Psi^\circ(\xi_0)\big)/t
\]
exists, and can be computed as
\[
d\Psi^\circ(\xi_0;\xi)(s)=d_2\Psi\big(s,\xi_0(s);\xi(s)\big),
\]
cf. (2.7). One now argues, as with $\Psi^\circ$ itself,  that $d\Psi^\circ(\xi_0;\cdot):B(\fx^*TX)\to B(\fy^*TY)$ is continuous. Hence
$\Psi^\circ$ is complex differentiable at $\xi_0$, and therefore holomorphic on $\fU$.
\end{proof}

As we saw, one consequence of Lemma 2.2 is that the  local coordinates $\varphi_\fx$, $\fx\in B(S,X)$, are holomorphically related, 
and endow $B(S,X)$ with the structure of a complex manifold, modeled on Banach spaces of form $B(\fx^*TX)$.

For example, if $S=\{s_1,\dots,s_m\}$ is finite, then
\[
B(S,X)\ni\fx\mapsto\big(\fx(s_1),\dots,\fx(s_m)\big)\in X^m
\]
is a biholomorphism. In general, the correspondence $(S,X)\mapsto B(S,X)$ is natural: a continuous map $a:T\to S$ and a 
holomorphic map $X\to Y$ induce, by composition, a holomorphic map $B(S,X)\to B(T,Y)$. More generally instead of a holomorphic 
map we can use a family of holomorphic maps $\Phi:S\times X\to Y$. We formulate the result only when $a:T=S\to S$ is the identity.
\begin{lem}
Let  $\Phi:S\times X\to Y$ be continuous. If for each $s\in S$ the map  $\Phi(s,\cdot)$ is holomorphic, then
\[
\Phi^\circ:B(S,X)\ni\fx\mapsto \Phi\circ(\id_S\times\fx)\in B(S,Y)
\]
is holomorphic.
\end{lem}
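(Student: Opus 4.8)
\section*{Proof proposal}

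The plan is to reduce Lemma 2.3 to Lemma 2.2 by passing to the holomorphic charts $\varphi_\fx$ of (2.5). Fix $\fx_0\in B(S,X)$ and put $\fy_0=\Phi^\circ(\fx_0)=\Phi\circ(\id_S\times\fx_0)$; as in the proof of Lemma 2.2, $\fy_0$ is a bounded Borel map, so $\fy_0\in B(S,Y)$. It suffices to show $\Phi^\circ$ is holomorphic near $\fx_0$. Work in the charts $\varphi_{\fx_0}$ on $D(\fx_0)$ and $\varphi_{\fy_0}$ on $D(\fy_0)$; since $\varphi_{\fx_0}^{-1}(\xi)=\exp\circ\xi$, the coordinate representation $\varphi_{\fy_0}\circ\Phi^\circ\circ\varphi_{\fx_0}^{-1}$ carries a section $\xi$ of $\fx_0^*TX$ near the zero section to the section
\[
\eta(s)=F\big(\fy_0(s),\,\Phi(s,\exp\xi(s))\big)
\]
of $\fy_0^*TY$, a formula that is legitimate exactly when $\big(\fy_0(s),\Phi(s,\exp\xi(s))\big)\in D$ for every $s$, which holds on a neighbourhood of $0$ because at $\xi=0$ this pair equals $(\fy_0(s),\fy_0(s))$, on the diagonal. (One must also shrink the neighbourhood so that $\exp\circ\eta=\Phi^\circ(\exp\circ\xi)$ remains in $D(\fy_0)$, so that $\varphi_{\fy_0}$ genuinely receives $\Phi^\circ$; this is routine.) Thus it remains to prove $\xi\mapsto\eta$ is holomorphic near $0$.

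The key observation is that $\xi\mapsto\eta$ is of the form $\Psi^\circ$ handled by Lemma 2.2. Take $\psi=\Phi:S\times X\to Y$ and, writing $\pi:TX\to X$ for the projection, define
\[
\Psi(s,v)=F\big(\Phi(s,\pi v),\,\Phi(s,\exp v)\big)\in T_{\Phi(s,\pi v)}Y
\]
on the open set $U\subset S\times TX$ of pairs $(s,v)$ for which $\exp v$ is defined and $\big(\Phi(s,\pi v),\Phi(s,\exp v)\big)\in D$. The point being finessed here is that $s\mapsto\fy_0(s)=\Phi(s,\fx_0(s))$ is only Borel and cannot be fed into $\Psi$ directly; but a section $\xi$ of $\fx_0^*TX$ satisfies $\pi(\xi(s))=\fx_0(s)$, so $\Phi(s,\pi\xi(s))=\fy_0(s)$ and $\Psi\circ(\id_S\times\xi)=\eta$. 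Now $\Psi$ is continuous (as $\pi,\exp,\Phi,F$ are), and along each fibre $U\cap(\{s\}\times T_xX)$ it is the composite of $\exp$ restricted to $T_xX$, the holomorphic map $\Phi(s,\cdot)$, and the biholomorphism $F^{\Phi(s,x)}$, so it holomorphically maps that fibre into $T_{\Phi(s,x)}Y=T_{\psi(s,x)}Y$. Moreover $\psi\circ(\id_S\times\fx_0)=\fy_0$ and the closure of $(\id_S\times\fx_0)(S)$ lies in $S\times X$ trivially, while a tube–lemma argument shows the associated $\fU\subset B(\fx_0^*TX)$ is a neighbourhood of $0$. Hence Lemma 2.2 gives that $\Psi^\circ$, i.e.\ $\xi\mapsto\eta$, is holomorphic there; composing with the biholomorphic charts $\varphi_{\fx_0}^{-1}$ and $\varphi_{\fy_0}$ shows $\Phi^\circ$ is holomorphic near $\fx_0$. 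As $\fx_0$ was arbitrary, $\Phi^\circ\in\cO\big(B(S,X),B(S,Y)\big)$.

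Once Lemma 2.2 is available this is essentially bookkeeping; the only substantive step is the replacement of the Borel function $\fy_0$ by the continuous expression $\Phi(s,\pi v)$, which is precisely what makes the continuity hypotheses on $\psi$ and $\Psi$ in Lemma 2.2 hold. I expect the main obstacle to be the verification that all the domains involved ($U$, $\fU$, and the chart domains $D(\fx_0)$, $D(\fy_0)$) fit together on a genuine neighbourhood of $\fx_0$ — but each of these checks runs exactly parallel to computations already carried out in the proofs of Lemmas 2.1 and 2.2, so no new difficulty arises.
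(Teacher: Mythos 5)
Your proof is correct and follows the paper's own route: pass to the charts $\varphi_{\fx_0}$, $\varphi'_{\fy_0}$ and reduce to Lemma 2.2 by exhibiting the coordinate representation as $\Psi^\circ$ for a suitable continuous, fiberwise holomorphic $\Psi$, taking $\psi=\Phi$. In fact your formula $\Psi(s,v)=F'\big(\Phi(s,\pi v),\Phi(s,\exp v)\big)$ corrects what appears to be a typo in the paper, which writes $\Psi(s,v)=F'\big(\pi v,\Phi(s,\exp v)\big)$; there $\pi v\in X$ rather than $Y$, so it cannot serve as the first argument of $F'$ (whose domain is $D'\subset Y\times Y$), and your accompanying observation — that the Borel map $\fy_0(s)=\Phi(s,\fx_0(s))$ cannot be fed into $\Psi$ directly and must be replaced by the continuous expression $\Phi(s,\pi v)$, which agrees with $\fy_0(s)$ along sections of $\fx_0^*TX$ — is exactly the point that forces $\Psi$ to have this shape.
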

\begin{proof}
Let $D'\subset Y\times Y$ and $F':D'\to TY$ be as in Lemma 2.1, but with $Y$ rather than $X$. If 
$\varphi_\fx$ is a local coordinate on $B(S,X)$ as in (2.5), $\fy=\Phi^\circ(\fx)$, and $\varphi'_\fy$ is a local coordinate on
$B(S,Y)$ at $\fy$, we need to to check that the map 
\[
\varphi'_\fy\circ\Phi^\circ\circ\varphi_\fx^{-1}:
\xi\mapsto F'\circ\big(\fy\times\Phi\circ(\id_S\times\exp\circ\xi  )\big)\in B(\fy^*TY)
\]
is holomorphic where defined on $B(\fx^*TX)$. But this follows from Lemma 2.2 if we choose 
$\Psi(s,v)=F'\big(\pi v,\Phi(s,\exp v)\big)$.
\end{proof}

The complex structure on $B(S,X)$ was defined in terms of $D\subset X\times X$ and $F:D\to TX$. Since 
Lemma 2.3 applies with any choice of $D,F$ (and $D',F'$), it follows by taking $Y=X$ and $\Phi(s,x)=x$ that,
in the end, different choices of $D,F$ give rise to the same complex structure on $B(S,X)$.

Continuous maps $S\to X$ form a complex submanifold $C(S,X)\subset B(S,X)$, modeled on Banach spaces
$C(\fx^*TX)$ of continuous sections, $\fx\in C(S,X)$. It is closed as a subset of $B(S,X)$, but not a direct submanifold:
while locally the pair $C(S,X)\subset B(S,X)$ is biholomorphic to the pair $C(\fx^*TX)\subset B(\fx^*TX)$, the space
$C(\fx^*TX)$ will not in general have a closed complement in $B(\fx^*TX)$. It is this circumstance that makes even the
local holomorphic extension problem from $C(S,X)$ to $B(S,X)$ difficult (but solved by the Aron--Berner extension theorem).

An $\fx_0\in C(S,X)$ and a closed $A\subset S$ determine spaces of based maps
\begin{equation*}
\fB=B(S,X,A,\fx_0)\subset B(S,X)\quad\text{and}\quad \fC=C(S,X,A,\fx_0)\subset C(S,X)\cap\fB.
\end{equation*}
$\fB=\{\fx\in B(S,X):\fx|A=\fx_0|A\}$, and $\fC$ denotes the connected component of $\fx_0$ in 
$ \{\fx\in C(S,X):\fx|A=\fx_0|A\}$.
These are closed complex submanifolds of $B(S,X)$, resp.  $C(S,X)$, modeled on subspaces
\[
B_A(\fx^*TX)\subset B(\fx^*TX),\quad\text{respectively}\quad C_A(\fx^*TX)\subset C(\fx^*TX),
\]
of sections that vanish on $A$. 
It is the manifolds 
$\fB$ and $\fC\subset\fB$ that are the targets of our investigations.

\section{Aron--Berner extension}

Let $\fX,\fY$ be Banach spaces, $x_0\in\fX$, $r>0$, and 
$$
B=\{x\in\fX:||x-x_0||<r\},\qquad B''=\{\xi\in\fX'':||\xi-x_0||<r\}.
$$ 
In this section we review how to extend a bounded $f\in\cO(B,\fY)$ to a bounded function 
$f\ABt\in\cO(B'',\fY'')$, with the 
same bound. 

The procedure starts with a(n always continuous) $k$--linear form $T:\fX^{\oplus k}\to\bC$, $k=0,1,\dots$,
and constructs its $k$--linear Aron--Berner extension $\bar T:\fX''^{\oplus k}\to\bC$; the association
$T\mapsto\bar T$ is linear. When $k=0$, $T$ is constant, and the
extension is the same constant; when $k=1$, $\bar T:\fX''\to\bC$ is the second transpose of $T$. 
In general, to define 
$\bar T$ by recurrence, suppose $\bar T$ exists for $k$--linear operators $T$; then take a $(k+1)$--linear
$T$. For any choice of $x_1,\dots,x_k\in\fX$ the second transpose of $T(x_1,\dots,x_k,\cdot):\fX\to\bC$ is
a linear form $T''(x_1,\dots,x_k,\cdot):\fX''\to\bC$. For each $\xi\in\fX''$ 
$$
T_\xi=T''(\cdot,\dots,\cdot,\xi):\fX^{\oplus k}\to\bC
$$ 
is $k$--linear, and depends linearly on $\xi$. The extensions $\bar T_\xi:\fX''^{\oplus k}\to \bC$ then define
\begin{equation} 
\bar T(\xi_1,\dots,\xi_{k+1})=\bar T_{\xi_{k+1}}(\xi_1,\dots,\xi_k), \qquad \xi_j\in\fX''.
\end{equation}

More generally, if $T:\fX^{\oplus k}\to\fY$ is a $k$--linear operator, for each $l\in\fY'$ we consider the
$k$--linear form $T_l=lT$. The extensions constructed above, for each $\xi_j\in\fX''$ 
produce a linear form $l\mapsto \bar T_l(\xi_1,\dots,\xi_k)$ on $\fY'$, i.e., an element 
$\bar T(\xi_1,\dots,\xi_k)\in\fY''$. This defines the $k$--linear extension $\bar T:\fX''^{\oplus k}\to\fY''$ of $T$.
The standard cautionary remark here is that if the arguments of $T$ are permuted to produce a $k$--linear
operator $T_1$, then $\bar T_1$ is not in general the same as $\bar T$ with arguments correspondingly
permuted. 

The extension $\bar T:\fX''\to\fY''$ of a linear operator $T$ is continuous when both $\fX'',\fY''$ are endowed with the
weak* topology. The extension of multilinear operators is continuous only in a weaker sense. Let us say that a
function $F:\fX''^{\oplus k}\to\fY''$ is slightly continuous to mean that for any $j=1,\dots,k$, if 
$\xi_1,\dots,\xi_{j-1}\in\fX$ and $\xi_{j+1},\dots,\xi_k\in \fX''$ are fixed, then 
$F(\xi_1,\dots,\xi_j,\dots,\xi_k)\in \fY''$ depends continuously on $\xi_j\in\fX''$ in the weak* topology of
$\fX'',\fY''$. The following is not new:

\begin{lem}
The extension $\bar T:\fX''^{\oplus k}\to\fY''$ of a $k$--linear $T:\fX^{\oplus k}\to\fY$ is slightly continuous.
\end{lem}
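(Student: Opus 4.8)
The plan is to prove slight continuity by induction on $k$, following the same recursive scheme that defined $\bar T$. The base cases are immediate: for $k=0$ the extension is constant, hence trivially slightly continuous, and for $k=1$ the extension $\bar T=T''$ is the second transpose of a continuous linear operator, so it is weak*-to-weak* continuous, which is slight continuity in the single available slot. It therefore suffices to treat the inductive step, assuming the statement for all $k$-linear operators (valued in any Banach space) and deducing it for a $(k+1)$-linear $T:\fX^{\oplus(k+1)}\to\fY$.

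Recall the defining formula (3.1): for $\xi_{k+1}\in\fX''$ one forms the $k$-linear $T_{\xi_{k+1}}(\cdot,\dots,\cdot)=T''(\cdot,\dots,\cdot,\xi_{k+1})$, which depends linearly on $\xi_{k+1}$, and sets $\bar T(\xi_1,\dots,\xi_{k+1})=\overline{T_{\xi_{k+1}}}(\xi_1,\dots,\xi_k)$. Now fix a slot $j\in\{1,\dots,k+1\}$. First I would dispose of the last slot, $j=k+1$: here $\xi_1,\dots,\xi_k$ are to be held fixed in $\fX$ (not merely $\fX''$), and we must show $\xi_{k+1}\mapsto \overline{T_{\xi_{k+1}}}(\xi_1,\dots,\xi_k)$ is weak*-continuous. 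Since $\xi_1,\dots,\xi_k\in\fX$, the extension $\overline{T_{\xi_{k+1}}}$ evaluated on those $\fX$-arguments reduces, by construction of the Aron--Berner extension (the $\fX$-arguments ``pass through'' untouched in each recursion step), simply to $T_{\xi_{k+1}}(\xi_1,\dots,\xi_k)=T''(\xi_1,\dots,\xi_k,\xi_{k+1})$. But $T(\xi_1,\dots,\xi_k,\cdot):\fX\to\fY$ is a continuous linear operator, so its second transpose $T''(\xi_1,\dots,\xi_k,\cdot):\fX''\to\fY''$ is weak*-to-weak* continuous, giving the claim for $j=k+1$.

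For a slot $j\le k$, fix $\xi_1,\dots,\xi_{j-1}\in\fX$, fix $\xi_{j+1},\dots,\xi_{k+1}\in\fX''$, and vary $\xi_j\in\fX''$. With $\xi_{k+1}$ now frozen, $T_{\xi_{k+1}}:\fX^{\oplus k}\to\fY$ is a fixed continuous $k$-linear operator, and $\bar T(\xi_1,\dots,\xi_{k+1})=\overline{T_{\xi_{k+1}}}(\xi_1,\dots,\xi_k)$; the inductive hypothesis applied to $T_{\xi_{k+1}}$ says exactly that this depends weak*-continuously on $\xi_j$ when $\xi_1,\dots,\xi_{j-1}\in\fX$ and $\xi_{j+1},\dots,\xi_k\in\fX''$ are fixed — which is our situation. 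This closes the induction for the scalar-valued case; the passage to a general Banach-space-valued $T$ is handled as in the paper's construction, by testing against $l\in\fY'$: weak*-continuity of $\fY''$-valued maps is precisely continuity of $l\mapsto$ pairing for each fixed $l$, and $\overline{T_l}=\overline{(lT)}$ is slightly continuous by the scalar case, uniformly enough to conclude.

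The main obstacle — really the only subtle point — is the bookkeeping in the $j=k+1$ step: one must be sure that evaluating the Aron--Berner extension $\overline{T_{\xi_{k+1}}}$ on arguments $\xi_1,\dots,\xi_k$ that already lie in $\fX$ returns the original value $T_{\xi_{k+1}}(\xi_1,\dots,\xi_k)$, so that the dependence on $\xi_{k+1}$ collapses to the manifestly weak*-continuous second transpose $T''(\xi_1,\dots,\xi_k,\cdot)$. This is a standard compatibility property of the construction (the extension restricts to the original form on $\fX^{\oplus k}$), but it should be invoked explicitly. Everything else is a routine unwinding of (3.1) against the inductive hypothesis.
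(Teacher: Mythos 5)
Your argument is correct and follows essentially the same route as the paper: induction on $k$ with the scalar reduction via $l\in\fY'$, the slot $j\le k$ handled by the inductive hypothesis applied to $T_{\xi_{k+1}}$ through (3.1), and the slot $j=k+1$ collapsing (since $\xi_1,\dots,\xi_k\in\fX$) to the second transpose $T''(x_1,\dots,x_k,\cdot)$, which is weak*-continuous. The only cosmetic remark is that no ``uniformity'' is needed in the vector-valued reduction, since $\langle\bar T(\xi_1,\dots,\xi_k),l\rangle=\bar T_l(\xi_1,\dots,\xi_k)$ holds by the very definition of the $\fY''$-valued extension.
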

\begin{proof}
It suffices to prove when $\fY=\bC$, that can be done by induction. When $k=0$, $\bar T$ is constant. When
$k=1$, $\bar T$ is the second transpose of $T$, and the claimed continuity follows directly from the 
definitions. Suppose the lemma holds for a certain $k\ge 1$ and let $T$ be $(k+1)$--linear. To show
that $\bar T$ is slightly continuous, we have to check a certain continuity property for all $j=1,\dots,k+1$.
When $j\le k$, this property follows from the induction hypothesis as $\bar T_{\xi_{k+1}}$ is slightly
continuous, cf. (3.1). If $j=k+1$, and $\xi_i=x_i\in\fX$, $i\le k$, then again
$\bar T(x_1,\dots,x_k,\xi_{k+1})=T_{\xi_{k+1}}(x_1,\dots,x_k)=T''(x_1,\dots,x_k,\xi_{k+1})$ depends continuously on $\xi_{k+1}\in\fX''$.
\end{proof}

Since $\fX$ is weak* dense in $\fX''$ (Goldstine's theorem, \cite[p. 424]{DS88}), it follows that
in the weak* topologies of $\fX'',\fY''$
\begin{equation} 
\bar T(\xi_1,\dots,\xi_k)=\lim_{x_1\to\xi_1}\lim_{x_2\to\xi_2}\dots\lim_{x_k\to\xi_k} T(x_1,\dots,x_k), \qquad 
\xi_1,\dots,\xi_k\in\fX'',
\end{equation}
which is often taken as the definition of $\bar T$, e.g., in \cite{AB78}.

We return to the construction of the Aron--Berner extension of $f$. The construction is compatible with
translations; for this reason we assume that the center of the balls $B,B''$ is $x_0=0$.
Any homogeneous polynomial
$P:\fX\to\fY$ can be represented as $P(x)=T(x,\dots,x)$ with a unique symmetric multilinear 
$T:\fX^{\oplus k}\to\fY$; we define $P\ABt(\xi)=\bar T(\xi,\dots,\xi)\in\fY''$, a homogeneous
polynomial on $\fX''$.
The correspondence $P\mapsto P\ABt$ can be extended by linearity to the space of all polynomials
$P:\fX\to\fY$. We need the fact that, using $\|\,\,\|$ for norm on $\fY$ and on its iterated dual spaces,
\begin{equation}
\sup_{B''}||P\ABt||=\sup_B||P||.
\end{equation}
This was proved by Davie and Gamelin \cite{DG89} when $\fY=\bC$. Carando in \cite{C99} observed that
(3.3) holds for homogeneous polynomials; that it is true in general easily follows from \cite{DG89}. Indeed,
assume that $\sup_B||P||=1$. If $l\in\fY'$ has norm $\le 1$, then $P_l=lP:\fX\to\bC$ is a polynomial,
\(
\sup_B|P_l|\le 1,\text{ and } P_l\ABt=lP\ABt.
\)
In the latter formula $l$ acts on $\fY''$ by transposition. By \cite{DG89}, then
\begin{equation}
\sup_{B''}|lP\ABt|\le 1\quad\text{for}\quad l\in\fY',\,\, ||l||\le 1.
\end{equation}
But by Goldstine's theorem \cite[p. 424]{DS88} the unit ball of $\fY'$ is weak* dense in the unit ball
of $\fY'''$, so that (3.4) holds for $l\in\fY'''$. Hence $\sup_{B''}||P\ABt||\le 1$. The opposite inequality being
obvious, (3.3) holds.

The following result, a slight improvement on \cite{AB78,DG89}, is known in the field:
\begin{thm}
Let $f\in\cO(B,\fY)$ be bounded, $f=\sum_{k=0}^\infty P_k$ its expansion in homogeneous polynomials,
$\deg P_k=k$. Then the series
\[
f\ABt=\sum_{k=0}^\infty P_k\ABt
\]
converges uniformly on concentric balls $B_\rho''\subset B''$ of radius $\rho<r$; it represents the Aron--Berner
extension $f\ABt\in\cO(B'',\fY'')$ of $f$. In addition,
\begin{equation}
\sup_{B''}||f\ABt||=\sup_B||f||.
\end{equation}
\end{thm}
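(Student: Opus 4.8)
The plan is to reduce everything to the polynomial identity (3.3), working with the homogeneous expansion and the linearity of the map $P\mapsto P\ABt$. By the translation-invariance already noted I take $x_0=0$, and write $M=\sup_B\|f\|$. First I would record Cauchy's estimates: for $x\in\fX$ with $\|x\|<r$ the map $\lambda\mapsto f(\lambda x)$ is holomorphic on a disc containing $\{|\lambda|\le1\}$, and
\[
P_k(x)=\frac{1}{2\pi}\int_0^{2\pi}f(e^{i\theta}x)\,e^{-ik\theta}\,d\theta,
\]
so $\sup_B\|P_k\|\le M$. Since $P_k\ABt$ is homogeneous of degree $k$, homogeneity together with (3.3) gives $\sup_{B_\rho''}\|P_k\ABt\|=(\rho/r)^k\sup_{B''}\|P_k\ABt\|=(\rho/r)^k\sup_B\|P_k\|\le(\rho/r)^kM$ for every $\rho<r$. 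Hence $\sum_kP_k\ABt$ converges uniformly on each concentric ball $B_\rho''$, $\rho<r$; as a locally uniform limit of $\fY''$-valued holomorphic maps is holomorphic and $\bigcup_{\rho<r}B_\rho''=B''$, the sum $f\ABt$ belongs to $\cO(B'',\fY'')$ and $\sum_kP_k\ABt$ is its expansion in homogeneous polynomials. Because the Aron--Berner extension of a symmetric multilinear operator restricts to that operator on $\fX^{\oplus k}$, one has $P_k\ABt|_\fX=P_k$, so $f\ABt$ restricts to $f$ on $B$; in particular $\sup_{B''}\|f\ABt\|\ge\sup_B\|f\|$, which is one half of (3.5).

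The reverse inequality $\sup_{B''}\|f\ABt\|\le M$ is the only step that needs an idea, since summing the series for $f\ABt$ term by term merely gives the worthless bound $M/(1-\rho/r)$ on $B_\rho''$. The device is to replace partial sums by a positive summability method. Let $C_nf=\sum_{k=0}^n(1-\tfrac{k}{n+1})P_k$ be the $n$th Fej\'er mean of the expansion of $f$. Inserting $f(e^{i\theta}x)=\sum_kP_k(x)e^{ik\theta}$ into the integral against the Fej\'er kernel $K_n(\theta)=\sum_{|j|\le n}(1-\tfrac{|j|}{n+1})e^{ij\theta}$ and using that $\theta\mapsto f(e^{i\theta}x)$ has only nonnegative Fourier modes, one finds $C_nf(x)=\tfrac{1}{2\pi}\int_0^{2\pi}f(e^{i\theta}x)K_n(\theta)\,d\theta$ for $\|x\|<r$; since $K_n\ge0$ and $\tfrac{1}{2\pi}\int_0^{2\pi}K_n=1$, this yields $\sup_B\|C_nf\|\le M$. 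Now $C_nf$ is a polynomial and $P\mapsto P\ABt$ is linear, so $(C_nf)\ABt=\sum_{k=0}^n(1-\tfrac{k}{n+1})P_k\ABt$ is precisely the $n$th Fej\'er mean of the homogeneous expansion of $f\ABt$; hence $\sup_{B''}\|(C_nf)\ABt\|=\sup_B\|C_nf\|\le M$ by (3.3). Finally, for fixed $\xi\in B''$ the series $\sum_kP_k\ABt(\xi)$ converges (its partial sums are uniformly Cauchy on $B_\rho''$ for $\|\xi\|<\rho<r$), and Ces\`aro summation is regular, so $(C_nf)\ABt(\xi)\to f\ABt(\xi)$; therefore $\|f\ABt(\xi)\|=\lim_n\|(C_nf)\ABt(\xi)\|\le M$, and taking the supremum over $\xi\in B''$ completes (3.5). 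Being holomorphic on $B''$, bounded by $\sup_B\|f\|$, and restricting to $f$, the map $f\ABt$ is the asserted Aron--Berner extension.

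I expect the crux to be the introduction of the Fej\'er means in the second paragraph: one has to find a summability method that at once does not increase the supremum over $B$ --- which forces a nonnegative kernel, and here exploits the absence of negative frequencies in $f$ so that the full Fej\'er kernel may be used --- is compatible with the linearity of the Aron--Berner construction, and is regular enough to recover $f\ABt$ pointwise in the limit. Once these means are in hand, everything else is routine Cauchy-estimate bookkeeping.
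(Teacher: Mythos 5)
Your proof is correct, and it takes a genuinely different route from the paper's for the one nontrivial step, the bound $\sup_{B''}\|f\ABt\|\le\sup_B\|f\|$. The paper applies (3.3) directly to the partial sums $\sum_{k=0}^m P_k$, writes $\sup_{B''_\rho}\|\sum_{k=0}^m P_k\ABt\|\le\sup_B\|\sum_{k=0}^m P_k\|$, and then lets $m\to\infty$ and $\rho\to r$. Read literally this is a little slippery, since $\sup_B\|\sum_{k=0}^m P_k\|$ need not tend to $\sup_B\|f\|$ (partial sums of a bounded power series are not uniformly dominated by the function on the open ball); the intended step is to invoke (3.3) on the scaled ball $B_\rho$, where the partial sums do converge uniformly, giving $\sup_{B''_\rho}\|f\ABt\|\le\sup_{B_\rho}\|f\|\le\sup_B\|f\|$ and then let $\rho\to r$. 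Your Fej\'er-mean device sidesteps this entirely: the Ces\`aro means $C_nf$ are exact integrals against the nonnegative kernel $K_n$, hence are bounded by $\sup_B\|f\|$ pointwise on all of $B$, while $(C_nf)\ABt$ is still a Ces\`aro mean of the expansion of $f\ABt$ and so converges to $f\ABt$ by regularity. This is a more robust version of the same underlying idea (reduce to the polynomial equality (3.3)), at the cost of introducing the summability argument; the paper's route is shorter but relies on the implicit rescaling of the ball. Both are fine.
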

\begin{proof}
$P_k$ is given by
\[
P_k(x)=\frac 1{2\pi}\int_0^{2\pi} f(e^{it}x)e^{-ikt}\,dt.
\]
This and (3.3) imply
\[
\sup_{B''}||P_k\ABt||=\sup_B||P_k||\le\sup_B||f||.
\]
 This in turn implies, in view of the homogeneity of $P_k\ABt$, that $\sum_kP_k\ABt$ converges uniformly on 
  $B''_\rho\subset B''$, $\rho<r$, hence represents a holomorphic function $f\ABt:B''\to\fY''$. 
  The estimate (3.3), with $P$ a partial sum, yields
 \[
 \sup_{B''_\rho}\Big\|\sum_{k=0}^mP_k\ABt\Big\|\le\sup_{B} \Big\|\sum_{k=0}^m P_k\Big\|.
 \]
 Letting first $m\to\infty$ then $\rho\to r$, (3.5) follows.
 \end{proof}

Of course, $f\ABt$ is not the unique holomorphic extension of $f$ to $B''$. But it is natural in a 
sense. To formulate a theorem, we need the notion of (Arens) regularity:
\begin{defn}
A Banach space $\fY$ is regular if for any Banach space $\fZ$ (or, equivalently, only for $\fZ=\bC$), 
the extensions of any bilinear operator $T:\fY\oplus\fY\to\fZ$ and of
$T_1(x,y)=T(y,x)$ satisfy $\bar T_1(\xi,\eta)=\bar T(\eta,\xi)$ for all $\xi,\eta\in\fY$.
\end{defn}

Induction then shows that for any $k$--linear $T:\fY^{\oplus k}\to\fZ$ and any permutation $\sigma$
of $\{1,\dots,k\}$, the operator $T_1(x_1,\dots,x_k)=T(x_{\sigma(1)},\dots,x_{\sigma(k)})$ satisfies
$\bar T_1(\xi_1,\dots,\xi_k)=\bar T(\xi_{\sigma(1)},\dots,\xi_{\sigma(k)})$. Since 
$\bar T(\cdot,\xi_2,\dots,\xi_k)$ is
always  continuous in the weak* topologies, in regular spaces $\bar T$ is separately continuous in all its 
variables. \"Ulger shows in \cite{U87} that any C* algebra is regular; later on we will need a minor 
generalization of this. 
\begin{thm} 
In addition to assumptions and notation above, let $y_0\in \fY$, $s>0$, and
\(
B_\fY=\{y\in\fY:||y-y_0||<s\}.
\)
Suppose $\fZ$ is a third Banach space, $f:B\to B_\fY$ is holomorphic, $f(x_0)=y_0$, and $g:B_\fY\to\fZ$
is bounded and holomorphic. Then $(g\circ f)\ABt=g\ABt\circ f\ABt$, provided $g$ is a first degree 
polynomial, or $\fY$ is regular.
\end{thm}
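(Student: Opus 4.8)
The plan is to prove the composition formula $(g\circ f)\ABt = g\ABt\circ f\ABt$ by reducing everything to statements about multilinear operators and their Aron--Berner extensions, exploiting the two hypotheses separately.

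\textbf{The linear case.} First I would dispose of the case where $g$ is a first degree polynomial, say $g(y) = L(y-y_0) + z_0$ with $L:\fY\to\fZ$ linear. Since translations are built into the construction and commute with everything in sight, it is enough to treat $g = L$ linear. Here $g\ABt = \bar L:\fY''\to\fZ''$ is the second transpose, and $g\circ f = L\circ f$ has homogeneous expansion $\sum_k L\circ P_k$ where $f = \sum_k P_k$. So the claim reduces to $(L\circ P)\ABt = \bar L\circ P\ABt$ for a single homogeneous polynomial $P$ of degree $k$ with symmetric multilinear $T$; and this in turn reduces to $(L\circ T)^{-} = \bar L\circ \bar T$ at the multilinear level, which one proves by an easy induction on $k$ using the recursive definition (3.1) together with the compatibility of second transposes with composition by a linear map. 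This matches the observation already used in the paper around (3.4), where $l\in\fY'$ is pushed through.

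\textbf{The regular case.} When $\fY$ is regular, I would again pass to $\fZ = \bC$ (composing with $l\in\fZ'$ and using weak* density of the unit ball of $\fZ'$ in that of $\fZ'''$, exactly as in the derivation of (3.5)), and then expand both sides in homogeneous polynomials. Writing $g = \sum_m Q_m$ and $f - y_0 = \sum_{k\ge 1} P_k$, the polynomial $g\circ f$ has a homogeneous expansion whose degree-$n$ part is a finite sum of terms $Q_m(P_{k_1}(x),\dots)$ built by symmetrizing a multilinear operator obtained by plugging the symmetric forms of the $P_{k_i}$ into the symmetric $m$-linear form $R$ of $Q_m$. The core identity to establish is then: for multilinear $R:\fY^{\oplus m}\to\bC$ and multilinear $T_i:\fX^{\oplus k_i}\to\fY$, the extension of the composite $(x_1,\dots) \mapsto R\big(T_1(\dots),\dots,T_m(\dots)\big)$ equals $\bar R\big(\bar T_1(\dots),\dots,\bar T_m(\dots)\big)$. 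One proves this by iterated weak* approximation, moving the $\fX$-variables to $\fX''$ one at a time via Goldstine and formula (3.2): each limit can be pushed through $T_i$ because its second transpose is weak* continuous in the slot being moved, and then pushed through $R$ because, $\fY$ being regular, $\bar R$ is \emph{separately} weak* continuous in each of its $m$ arguments (this is precisely the point recorded after Definition 3.4). Summing over $n$ and invoking the uniform convergence of all the Aron--Berner series on slightly smaller balls (Theorem 3.3) gives $(g\circ f)\ABt = g\ABt\circ f\ABt$ on $B''_\rho$ for every $\rho < r$, hence on $B''$.

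\textbf{Main obstacle.} The delicate point is the bookkeeping in the regular case: verifying that after moving all $\fX$-variables to $\fX''$ one indeed lands on $\bar R\big(\bar T_1,\dots,\bar T_m\big)$ with the $\fY''$-arguments in the \emph{correct} slots, given the paper's standing warning that Aron--Berner extension does not commute with permutation of arguments. Regularity is exactly what neutralizes this: it guarantees $\bar R$ is symmetric-invariant under permutations (the induction after Definition 3.4) and separately weak* continuous, so the order in which the limits are taken is immaterial. Keeping track of which intermediate forms are multilinear over $\fX$ versus over $\fY$, and applying slight continuity (Lemma 3.1) versus full separate continuity at the right stage, is where care is needed; the convergence of the polynomial series, by contrast, is routine given Theorem 3.3 and the estimate (3.3).
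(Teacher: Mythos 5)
Your proposal follows essentially the same route as the paper's proof: expand $f$ and $g$ in homogeneous polynomials, reduce to the multilinear level, compute the extension of each composite term by iterated weak* limits via Goldstine and (3.2), and use regularity of $\fY$ (separate weak* continuity of the extended outer form) to pass the limits through, finishing with (3.3)/(3.5) to sum the series; the bookkeeping you flag is settled in the paper by exploiting the symmetry of the inner multilinear forms to restrict to shuffle permutations, so that within each block the limits are taken in the canonical Aron--Berner order. Your explicit treatment of the first degree case is the ``much simplified'' computation the paper alludes to, so the two arguments coincide in substance.
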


\begin{proof} We will prove under the assumption that $\fY$ is regular. The other case follows by similar, but 
much simplified calculations. By translations we can arrange that $x_0=0$ and $f(x_0)=y_0=0$.
First we assume that $f$ and $g$ are polynomials, $g$ homogeneous of degree $k$. This means that
there are symmetric multilinear operators $T:\fY^{\oplus k}\to\fZ$ and $U_i:\fX^{\oplus i}\to\fY$,
$i=0,1,\dots,h$, such that
\begin{gather}
g(y)=T(y,\dots,y),\qquad f(x)=\sum_{i=0}^h U_i(x,\dots,x),\qquad\text{and therefore}\nonumber\\
(g\circ f)(x)=T\big(f(x),\dots,f(x)\big)=
\sum T\big(U_{j_1}(x,\dots,x),U_{j_2}(x,\dots,x),\dots\big),
\end{gather}
summation over $j_1,\dots,j_k\le h$. The function (3.6) is a polynomial in $x$. Its $j$'th
degree component is obtained from the symmetrization of the $j$--linear operator
\begin{equation*}
S_j(x_1,\dots,x_j)=\sum_{j_1+\dots+j_k=j}T\big(U_{j_1}(x_1,\dots,x_{j_1}),U_{j_2}(x_{j_1+1},\dots,x_{j_1+j_2}),\dots\big)
\end{equation*}
by setting $x=x_1=\dots=x_j$. The symmetrization $S_{j,\text{sym}}(x_1,\dots,x_j)$ is
\begin{equation} 
\dfrac1{j!}\sum_\sigma \sum_{j_1+\dots+j_k=j}T\big(U_{j_1}(x_{\sigma(1)},\dots,x_{\sigma(j_1)}),
U_{j_2}(x_{\sigma(j_1+1)},\dots,x_{\sigma(j_1+j_2)}),\dots\big),
\end{equation} 
the first sum over all permutations $\sigma$ of $\{1,\dots,j\}$. Now 
\begin{equation}
\begin{gathered} 
g\ABt(\eta)=\bar T(\eta,\dots,\eta), \qquad f\ABt(\xi)=\sum_i\bar U_i(\xi,\dots,\xi), \qquad\text{and by (3.2)}\\
(g\circ f)\ABt(\xi)=\sum_j\lim_{x_1\to\xi}\lim_{x_2\to\xi}\dots\lim_{x_j\to\xi} S_{j,\text{sym}}(x_1,\dots,x_j).
 \end{gathered}
 \end{equation}
We compute this iterated limit for the generic term in (3.7). Since each $U_i$ is symmetric, we may restrict
attention to $\sigma$ such that $\sigma(1)<\sigma(2)<\dots<\sigma(j_1)$, 
$\sigma(j_1+1)<\dots<\sigma(j_1+j_2)$, and so on (shuffles). Then the arguments of $T$ in this generic term
tend to $\bar U_{j_1}(\xi,\dots,\xi)$, $\bar U_{j_2}(\xi,\dots,\xi)$, etc., and since $T$ is separately continuous,
the limit of the generic term is $\bar T\big(\bar U_{j_1}(\xi,\dots,\xi),\bar U_{j_2}(\xi,\dots\xi),\dots\big)$. 
In light of (3.8) and (3.7) therefore the limit in (3.8) is indeed 
$\bar T\big(\sum_0^h\bar U_i(\xi,\dots,\xi),\dots,\sum_0^h\bar U(\xi,\dots,\xi)\big)$.

Next we let $f$ be arbitrary, but keep  $g$ $k$--homogeneous. We expand 
$f$ in a homogeneous series. The partial sums $f_j$ of the series, polynomials, tend to 
$f$ as $j\to\infty$, uniformly on concentric balls $\subset B$ of radius $<r$. By (3.5) and by 
what we have proved, 
\[
(g\circ f)\ABt=\lim_{j\to\infty}(g\circ f_j)\ABt=\lim_{j\to\infty}g\ABt\circ f_j\ABt=g\ABt\circ f\ABt.
\]

Finally, a general bounded holomorphic $g:B_\fY\to\fZ$ can be expanded in a homogeneous series
$g=\sum_k g_k$, converging uniformly on concentric balls $\subset B_\fY$ of radius $<s$. Again using 
(3.5), on some neighborhood of $0\in B''$
\[
(g\circ f)\ABt=\lim_{k\to\infty}\big(\sum_{j=0}^k g_j\circ f)\ABt=\lim_{k\to\infty}\sum_{j=0}^kg_j\ABt\circ f\ABt=
g\ABt\circ f\ABt,
\]
and $(g\circ f)\ABt=g\ABt\circ f\ABt$ follows by analytic continuation.
\end{proof}

\section{Extensions from certain $C(S)$--modules} 

In this section we restrict ourselves to certain Banach spaces $\fX$ that are modules over $C(S)$. These modules arise
from a topological complex vector bundle $V\to S$, of finite rank, over the compact space $S$; they are the space 
$C(V)$ of continuous sections of $V$ and certain subspaces. We embed the space $B(V)$ of bounded Borel sections
into the bidual of $C(V)$, and study the restriction $f\abt$ to $B(V)$ (and subspaces) of the Aron--Berner extension 
$f\ABt$ from $C(V)$ 
(and subspaces). The extension $f\mapsto f\abt$ has a certain naturality property. Since the mapping spaces $\fB,\fC$ are 
modeled on modules studied here, the results of this section directly apply to provide local Aron--Berner--type 
extensions in those mapping spaces. 

By a norm on $V$ we mean a continuous function $|\,\,|:V\to[0,\infty)$ that restricts to a vector space norm on each fiber $V_s$, $s\in S$.
Given a closed $A\subset S$ and a norm on $V$, we consider the Banach spaces
\[
B_A(V)=\{\text{bounded Borel sections $\xi$ of }V: \xi|A=0\},\qquad ||\xi||=\sup_S|\xi|,
\]
and $C_A(V)\subset B_A(V)$ the subspace of continuous sections. Multiplication by functions turns these spaces into modules
over the Banach algebra $C(S)$.  Thus $C_\emptyset(V)=C(V)$, $B_\emptyset(V)=B(V)$.

The main point will be the construction of a canonical embedding $B_A(V)\to C_A(V)''$. We start by describing
the dual $C_A(V)'$ in terms of measures. Let $M(S\setminus A)$ denote the space of complex valued regular Borel measures on $S\setminus A$. This is also a $C(S)$--module: the product $fm$ of $f\in C(S)$
with a measure $m$ is given by
\[
(f m)(T)=\int_T f\,dm,\qquad T\subset S\setminus A\text{ Borel};
\]
thus $d(f m)/dm=f$.  
Let $C_0(S\setminus A)$ be
the space of continuous complex functions on the locally compact space $S\setminus A$ that vanish 
at infinity, and 
$C_A(S)\subset C(S)$ the ideal of functions that vanish on $A$. The map
\[
C_A(S)\ni f\mapsto f|(S\setminus A)\in C_0(S\setminus A)
\]
is an isometric isomorphism. Let furthermore $V^*\to S$ be the bundle dual to $V$, endowed with the dual norm $|\,\,|^*$, 
and $\langle\,,\rangle$  the duality pairing between $V^*$ and $V$. The tensor product
\[
C(V^*)\otimes_{C(S)} M(S\setminus A)
\]
is the $C(S)$--module freely generated by $C(V^*)\times M(S\setminus A)$, modulo the the submodule generated by
\begin{equation} 
(fg_1+g_2,m)-f(g_1,m)-(g_2,m),\quad (g,fm_1+m_2)-f(g,m_1)-(g,m_2),
\end{equation}
$f\in C(S)$, $g_1,g_2,g\in C(V^*)$, $m_1,m_2,m\in M(S\setminus A)$. Write $g\otimes m$ for the class of
$(g,m)$. 
\begin{lem} 
Associating with $t=\sum_1^kg_j\otimes m_j\in C(V^*)\otimes_{C(S)}M(S\setminus A)$ the linear form
\begin{equation} 
C_A(V)\ni\eta\mapsto\int_{S\setminus A}\sum_{j=1}^k\langle g_j,\eta\rangle\,dm_j\in\bC
\end{equation}
defines an isomorphism
\begin{equation} 
C(V^*)\otimes_{C(S)}M(S\setminus A)\overset{\approx}\longrightarrow C_A(V)'
\end{equation}
of $C(S)$--modules.
\end{lem}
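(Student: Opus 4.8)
The plan is to identify $C_A(V)'$ concretely and then match it with the tensor product. First I would observe that $C_A(V)$ is isometrically isomorphic to $C_0(S\setminus A, V|(S\setminus A))$, the space of continuous sections of $V$ over the locally compact space $S\setminus A$ that vanish at infinity; under this identification the map $C_A(S)\ni f\mapsto f|(S\setminus A)$ referenced in the text is exactly the scalar case. By the vector-valued Riesz representation theorem (see e.g.\ the treatment of $C_0(\Omega,E)'$ for finite-dimensional $E$, or one can trivialize locally and patch with a partition of unity), the dual of $C_0(S\setminus A, V|(S\setminus A))$ is the space of $V^*$-valued regular Borel measures on $S\setminus A$ of finite total variation, the pairing being $\eta\mapsto\int_{S\setminus A}\langle\,\cdot\,,\eta\rangle$. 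So the content of the lemma is that the natural map sending $t=\sum g_j\otimes m_j$ to the $V^*$-valued measure $\mu$ with $d\mu = \sum g_j\,dm_j$ (i.e.\ $\langle g_j\rangle$ integrated against $m_j$) gives an isomorphism $C(V^*)\otimes_{C(S)}M(S\setminus A)\to \{V^*\text{-valued measures}\}$.

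The key steps, in order: (1) Check the displayed map (4.5) is well defined, i.e.\ the two families of relations in (4.4) are killed — this is immediate from $d(fm)/dm = f$ and bilinearity of the integral. (2) Show it lands in $C_A(V)'$ with the stated boundedness — clear since $|\int\langle g_j,\eta\rangle dm_j|\le (\sum\|g_j\|_\infty |m_j|)\,\|\eta\|$. (3) Surjectivity: given $\ell\in C_A(V)'$, represent it by a $V^*$-valued regular Borel measure $\mu$ on $S\setminus A$ of finite variation (vector Riesz), write $\mu = \phi\cdot|\mu|$ with a Borel section $\phi$ of $V^*$ of unit length $|\mu|$-a.e., approximate $\phi$ in $L^1(|\mu|)$ by continuous sections $g$ of $V^*$ (Lusin-type argument on the compact space $S$), and note $g\otimes |\mu|$ maps close to $\ell$; a small completeness/limit argument then produces an exact preimage, or better, first reduce by a partition of unity subordinate to trivializing opens so that $\mu$ is a finite sum of scalar measures times fixed sections, which is visibly in the image of (4.5). (4) Injectivity: if $\sum g_j\otimes m_j$ maps to $0$, the associated $V^*$-valued measure vanishes; one then uses the module relations — pulling scalar functions across, and the fact that $C(V^*)$ is a finitely generated projective $C(S)$-module (Serre--Swan), so $C(V^*)\otimes_{C(S)}M(S\setminus A)$ injects into a finite sum of copies of $M(S\setminus A)$ via a local frame — to conclude the tensor is $0$.

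I expect the main obstacle to be step (3)/(4) done cleanly, i.e.\ controlling the tensor product $C(V^*)\otimes_{C(S)}M(S\setminus A)$ without fussing over completed vs.\ algebraic tensor products. The cleanest route, which I would take, is to use a finite open cover $\{U_\alpha\}$ of $S$ over which $V^*$ is trivial, with a subordinate partition of unity $\{\chi_\alpha\}\subset C(S)$; then every element of the tensor can be rewritten (using the relations (4.4)) as $\sum_\alpha \sum_i (\chi_\alpha e_i^\alpha)\otimes m_i^\alpha$ where $e_i^\alpha$ is a local frame for $V^*$ over $U_\alpha$ extended by $\chi_\alpha$, so that the module is spanned over $C(S)$ — in fact over $\bC$ after absorbing the scalars into the measures — by finitely many "coordinate" summands. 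This makes both the Riesz-representation matching and the kernel computation into the scalar statement $C_0(S\setminus A)'\cong M(S\setminus A)$ applied coordinatewise, with the partition-of-unity bookkeeping being the only real work. I would flag that one must check the resulting finite presentation is independent of choices, but this follows from functoriality of the pairing (4.5).
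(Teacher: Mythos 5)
Your main route---cover $S$ by finitely many opens over which $V^*$ is trivial, take local dual frames and a subordinate partition of unity, and reduce both surjectivity and injectivity to the scalar Riesz theorem for $C_0(S\setminus A)'\approx M(S\setminus A)$ applied coordinatewise---is exactly what the paper does, and it is correct. The alternative first sketch via vector-valued Riesz and a Lusin/completeness argument is shakier (the algebraic $C(S)$-tensor product has no obvious topology in which that limit argument would close), but you correctly flag it as the weaker option and fall back on the partition-of-unity reduction.
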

\begin{proof}
To see that different representations of $t$ produce the same linear form (4.2) we have to check that (4.2) gives zero when the representations correspond to the
vectors in (4.1); and this is obvious. Thus (4.2) indeed defines a homomorphism (4.3). 

Next we verify that (4.3) is surjective. Let $l\in C_A(V)'$. Suppose $U\subset S$ is open and $V$ is trivial 
over a neighborhood 
of $\bar U$. Fix $\xi_1,\dots,\xi_r\in C(V)$ and $g_1,\dots,g_r\in C(V^*)$ that form dual frames of $V,V^*$ over  $U$.
If $\chi\in C(S)$ is supported in $U$, Riesz's theorem, generalized \cite{R87}, implies that the linear forms
\[
C_0(S\setminus A)\approx C_A(S)\ni f\mapsto l(\chi f\xi_i\big)\in\bC
\]
can be represented as $l(\chi f\xi_i)=\int_{S\setminus A}f\, dm_i$ with $m_i\in M(S\setminus A)$.
Hence
\[
l(\chi\eta)=\sum_{i=1}^r l\big(\chi\langle g_i,\eta\rangle\xi_i\big)=\sum_{i=1}^r\int_{S\setminus A}\langle g_i,\eta\rangle\,dm_i,
\qquad \eta\in C_A(V).
\]
If $\fU$ is a finite cover of $S$ consisting of $U$ as above, $\chi=\chi_U$ form a subordinate
partition of unity, and we choose 
$r=r^U$, $\xi_i=\xi_i^U$, $g_i=g_i^U$, $m_i=m_i^U$ accordingly, then
\[
l(\eta)=\sum_{U\in\fU}l(\chi_U\eta)=\int_{S\setminus A}\sum_{U\in\fU}\sum_{i=1}^{r^U}\langle g_i^U,\eta\rangle\,dm_i^U,\qquad \eta\in C_A(V).
\]
Thus (4.3) sends $\sum_U\sum_i g_i^U\otimes m_i^U$ to $l$, and is therefore surjective.

Finally, suppose that $t=\sum h_j\otimes m_j$ is in the kernel of
(4.3). Assume first that each $h_j$ is supported in an
open $U$, and there are $\xi_1,\dots,\xi_r\in C(V)$, $g_1,\dots,g_r\in C(V^*)$ that form dual frames over $U$. Then
each $h_j$ is in the submodule generated by the $g_i$, and $t$ can be written as $\sum_i g_i\otimes n_i$, with $n_i\in M(S\setminus A)$ supported in $U$. Thus for any $f_1,\dots,f_r\in C_A(S)$ and $\eta=\sum_i f_i\xi_i\in C_A(V)$
\[
\sum_i\int_{S\setminus A}f_i\,dn_i=\int_{S\setminus A}\sum_i\langle g_i,\eta\rangle\,dn_i=0.
\]
Hence each $n_i=0$ and $t=0$. Now with a general $ t=\sum_j h_j\otimes m_j$ in the kernel of (4.3) and with a 
sufficiently fine partition of unity $\chi_U$, $U\in\fU$, we have
\(
t=\sum_U\chi_Ut=0
\)
by what we have already proved, since $\chi_Ut $ is also in the kernel.
\end{proof}

Next we define a bilinear pairing $P:B_A(V)\times C_A(V)'\to\bC$. If $\eta\in B_A(V)$ and $l\in C_A(V)'$ corresponds, under (4.3), to $t=\sum_j g_j\otimes m_j\in C(V^*)\otimes_{C(S)}M(S\setminus A)$, set
\begin{equation} 
P(\eta,l)=\int_{S\setminus A}\sum_j\langle g_j,\eta\rangle\,dm_j.
\end{equation}
Again, different representations of $t$ give the same integral in (4.4).
\begin{lem} 
$B_A(V)\ni\eta\mapsto P(\eta,\cdot)\in C_A(V)''$ is an isometric embedding, an extension of the canonical embedding $C_A(V)\to C_A(V)''$.
\end{lem}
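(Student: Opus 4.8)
The plan is to verify three things in turn: that $P(\eta,\cdot)$ is a well-defined element of $C_A(V)''$ for each $\eta\in B_A(V)$; that the map $\eta\mapsto P(\eta,\cdot)$ restricts on $C_A(V)$ to the canonical embedding into the bidual; and that it is isometric. The first point is mostly bookkeeping already begun in the text: the integral in (4.4) vanishes on each generator (4.1), so it depends only on the class $t$ and not on the representation $\sum_j g_j\otimes m_j$, hence $P(\eta,\cdot)$ is a well-defined linear functional on $C(V^*)\otimes_{C(S)}M(S\setminus A)\cong C_A(V)'$. That it is bounded, with $\|P(\eta,\cdot)\|\le\sup_S|\eta|$, I would get by estimating the integral: writing $l$ in the form (4.2), the defining formula for $P$ gives $|P(\eta,l)|\le\int_{S\setminus A}\big|\sum_j\langle g_j,\eta\rangle\big|\,d|m|$ where $|m|$ is the relevant total-variation measure, and pointwise $\big|\sum_j\langle g_j(s),\eta(s)\rangle\big|\le|\eta(s)|^{**}\cdot(\text{something bounded})$ — more precisely, I would choose the representative of $t$ adapted to a trivialization and compare with the norm of $l$ itself, whose formula (4.2) applied to continuous test sections $\eta$ with $\|\eta\|\le1$ controls the same measures.

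For the second point, if $\eta\in C_A(V)$ then formula (4.4) literally coincides with formula (4.2) evaluated at $\eta$, which is exactly the value $\langle l,\eta\rangle$ of the functional $l$ corresponding to $t$. Thus $P(\eta,l)=\langle l,\eta\rangle$ for all $l$, which says precisely that $P(\eta,\cdot)$ is the image of $\eta$ under the canonical embedding $C_A(V)\hookrightarrow C_A(V)''$. This also upgrades the norm bound: since $P$ extends the canonical embedding, $\|P(\eta,\cdot)\|\ge\sup\{|\langle l,\eta'\rangle|:\ldots\}$ already on continuous sections, but that is not yet the isometry statement for Borel $\eta$.

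The main obstacle is the isometry for a genuinely discontinuous $\eta\in B_A(V)$, i.e.\ showing $\|P(\eta,\cdot)\|\ge\sup_S|\eta|$. Given $\delta>0$, pick $s_0\in S\setminus A$ with $|\eta(s_0)|>\sup_S|\eta|-\delta$, and pick $g_0\in V_{s_0}^*$ with $|g_0|^*\le1$ and $\langle g_0,\eta(s_0)\rangle$ nearly equal to $|\eta(s_0)|$. Extend $g_0$ to a continuous section $g\in C(V^*)$ with $|g|^*\le1$ everywhere (possible over a trivializing neighborhood, cut off by a bump function). Now I want a regular Borel measure $m$ on $S\setminus A$, of total mass $1$, concentrated near $s_0$, so that the functional $l$ represented by $g\otimes m$ has norm $\le1$ (immediate from (4.2) since $|g|^*\le1$ and $|m|(S\setminus A)=1$) while $P(\eta,l)=\int\langle g,\eta\rangle\,dm$ is close to $|\eta(s_0)|$. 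The delicate issue is that $\eta$ is only Borel, so $\langle g,\eta\rangle$ need not be continuous near $s_0$ and a point mass $m=\delta_{s_0}$ would pick out exactly $\langle g(s_0),\eta(s_0)\rangle$ — which is in fact fine, since a Dirac mass is a regular Borel measure of mass $1$. Taking $m=\delta_{s_0}$, one gets $P(\eta,l)=\langle g(s_0),\eta(s_0)\rangle$ directly, which exceeds $\sup_S|\eta|-2\delta$. Letting $\delta\to0$ gives $\|P(\eta,\cdot)\|\ge\sup_S|\eta|=\|\eta\|$, and combined with the bound from the first paragraph this yields the isometry.
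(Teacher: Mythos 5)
Your lower-bound argument (via a Dirac mass at $s_0$) is essentially the paper's, and it is correct; the paper just uses Hahn--Banach to pick $\lambda\in V_{s_0}^*$ with $|\lambda(v)|\le|v|$ and equality at $v=\eta(s_0)$, hitting $|\eta(s_0)|$ exactly rather than up to $\delta$, but that is cosmetic. The statement that $P(\eta,\cdot)$ restricts on $C_A(V)$ to the canonical embedding is likewise immediate, as you say.

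The upper bound $\|P(\eta,\cdot)\|\le\sup_S|\eta|$ is where your sketch has a genuine gap. You propose to estimate $|P(\eta,l)|$ directly from a chosen representation $t=\sum_j g_j\otimes m_j$ of $l$, getting something like $\sum_j(\sup_S|g_j|^*)\,|m_j|(S\setminus A)\cdot\sup_S|\eta|$, and then to argue that ``the norm of $l$ itself \dots controls the same measures.'' But this last step is exactly what fails: the operator norm $\|l\|$ on $C_A(V)$ does \emph{not} control $\sum_j(\sup_S|g_j|^*)\,|m_j|(S\setminus A)$ for a general representation, and the isomorphism (4.3) is algebraic, not an isometry for any obvious tensor-product norm. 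Cancellations among the terms $g_j\otimes m_j$ can make $\|l\|$ much smaller than any such sum, and ``adapting the representative to a trivialization'' does not repair this. The missing idea is the one the paper uses: for the \emph{fixed} functional $l$ (with whatever representation), invoke Lusin's theorem to produce a continuous $\xi\in C_A(V)$ with $\sup_S|\xi|\le\sup_S|\eta|$ and $\sum_j(\sup_S|g_j|^*)\int_{S\setminus A}|\xi-\eta|\,d|m_j|<\varepsilon$. Then $|P(\eta,l)-P(\xi,l)|<\varepsilon$, and since $\xi$ is continuous one may use $|P(\xi,l)|=|l(\xi)|\le\|l\|\sup_S|\xi|\le\|l\|\sup_S|\eta|$. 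This replaces the uncontrollable tensor representation by the genuine operator norm of $l$ acting on a continuous section, and letting $\varepsilon\to0$ closes the argument. Without some such approximation step your upper-bound estimate does not go through.
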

\begin{proof}
The statement about the extension is immediate from the definition of the canonical embedding; which embedding is isometric. Next, fix $\eta\in B_A(V)$ and  
$\sum_j g_j\otimes m_j\in C(V^*)\otimes_{C(S)}M(S\setminus A)$. Given $\varepsilon>0$, it follows from 
Lusin's theorem that there is a
$\xi\in C_A(V)$ such that
\[
\sum_j\sup_S|g_j|^*\int_{S\setminus A} |\xi-\eta|\,d|m_j|<\varepsilon\qquad\text{and}
\qquad \sup_S|\xi|\le\sup|\eta|.
\]
Hence $|P(\xi,l)-P(\eta,l)|=\big|\int_{S\setminus A}\sum_j\langle g_j,\xi-\eta\rangle\,dm_j\big|<\varepsilon$. 
Since $||P(\xi,\cdot)||=\sup_S|\xi|$, we obtain $||P(\eta,\cdot)||\le\sup_S|\eta|$.

To prove the opposite inequality, let $s\in S\setminus A$ and $\delta_s$ the Dirac measure at $s$. Choose a linear $\lambda: V_s\to\bC$ such that
\[
|\lambda(v)|\le |v|\quad\text{ for all }v\in V_s,\quad\text{ with equality if } v=\eta(s),
\]
and define $l(\xi)=\lambda\big(\xi(s)\big)$, $\xi\in C_A(V)$. Thus $l\in C(V)'$ corresponds under (4.3) to $g\otimes\delta_s$ with an arbitrary $g\in C_A(V^*)$ such that $g(s)=\lambda$. Hence $|P(\eta,l)|=|\eta(s)|$. Since $||l||\le 1$, we have $||P(\eta,\cdot)||\ge|\eta(s)|$, and this being so for any $s\in S\setminus A$, 
in fact $||P(\eta,\cdot)||\ge \sup_S|\eta|$.
This completes the proof.
\end{proof}

Identifying $B_A(V)$ with its image in $C_A(V)''$ allows us to define a version of the Aron--Berner extension 
from $C_A(V)$ to $B_A(V)$. If $\Gamma\subset B_A(V)$ is a ball with center in $C_A(V)$ 
and $f\in\cO\big(\Gamma\cap C_A(V),\fY\big)$ is bounded, the
extension $f\ABt$ discussed in section 3 can be restricted to $\Gamma$ to produce a holomorphic
\begin{equation}
f\abt:\Gamma\to\fY'',
\end{equation}
with the same sup norm. Similarly, if $T:C_A(V)^{\oplus k}\to\fY$ is $k$--linear, the restriction to 
$B_A(V)^{\oplus k}$ of its Aron--Berner extension $\bar T$ will be denoted $T\abt$. 
\begin{thm}
In addition to $V\to S$, consider another topological complex vector bundle 
$W\to S$, $\xi_0\in C_A(V)$, an open
neighborhood $U\subset V$ of ${\xi_0(S)}$, and a ball $\Gamma\subset B_A(V)$ about $\xi_0$
such that $\xi\in\Gamma$ implies $\overline{\xi(S)}\subset U$. Suppose a continuous $H:U\to W$ maps
fibers $U\cap V_s$ holomorphically into corresponding fibers $W_s$. Then the map
\[
H^\circ:\Gamma\ni\xi\mapsto H\circ\xi\in B(W)
\] 
is holomorphic.

If, furthermore, $H^\circ$ is bounded on $\Gamma\cap C_A(V)$, then
\begin{equation} 
\big(H^\circ |\Gamma\cap C_A(V)\big)\abt=H^\circ.
\end{equation}
\end{thm}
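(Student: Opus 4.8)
The plan is to reduce the identity $(4.6)$ to a statement about the Aron--Berner extension of \emph{scalar} multilinear forms on $C_A(V)$ that are built fibrewise from a measure, and to prove that statement by induction on the number of variables, using the recursion $(3.1)$ together with the descriptions of $C_A(V)'$ and of the embedding $B_A(V)\to C_A(V)''$ from Lemmas 4.1, 4.2. To set up: translating (the extension is translation invariant), I may assume the center of $\Gamma$ is $\xi_0=0$, so $\Gamma=\{\xi\in B_A(V):\|\xi\|<\rho\}$, and testing ``$\xi\in\Gamma\Rightarrow\overline{\xi(S)}\subset U$'' against Borel sections supported at a single point of $S\setminus A$ gives $\{v\in V:|v|<\rho,\ \pi v\notin A\}\subset U$. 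Put $f=H^\circ|\Gamma\cap C_A(V)$; since $H$ and continuous sections are continuous, $f$ is a bounded element of $\cO(\Gamma\cap C_A(V),C(W))$, so $f\abt:\Gamma\to C(W)''$ is defined, and since $B(W)=B_\emptyset(W)$ embeds isometrically into $C_\emptyset(W)''=C(W)''$ by Lemma 4.2, it suffices to show $\langle f\abt(\xi),l\rangle=\langle H^\circ(\xi),l\rangle$ for every $\xi\in\Gamma$ and every $l\in C(W)'$. Fix $l$, corresponding by Lemma 4.1 (applied to $W$, $A=\emptyset$) to some $\sum_j h_j\otimes m_j$, so that $\langle H^\circ(\xi),l\rangle=\sum_j\int_S\langle h_j,H\circ\xi\rangle\,dm_j$ by $(4.4)$; since $l$ is a first degree polynomial, Theorem 3.4 gives $\langle f\abt(\xi),l\rangle=(l\circ f)\abt(\xi)$. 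So the task is to compare, on $\Gamma$, the Aron--Berner extension of $\xi\mapsto\sum_j\int_S\langle h_j,H\circ\xi\rangle\,dm_j$ from $\Gamma\cap C_A(V)$ with the same expression read directly on $\Gamma$.

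Next I would pass to homogeneous expansions. Using the holomorphy of $H^\circ$ on $\Gamma$ (the first assertion of the theorem), write $H^\circ=\sum_i Q_i$, its expansion at $0$, converging in $B(W)$ uniformly on balls of radius $<\rho$; evaluating the Cauchy integral defining $Q_i$ at a point $s\in S$ gives $Q_i(\xi)(s)=R_{i,s}(\xi(s),\dots,\xi(s))$, where $R_{i,s}$ is the $i$-th symmetric Taylor coefficient of the fibre map $H|U_s$ at $0$, jointly Borel (indeed continuous) in $s$ and the vector arguments. Restricting to continuous sections and applying $l$, the homogeneous expansion of $l\circ f$ is $\sum_i\phi_i$ with $\phi_i(\xi)=\langle Q_i(\xi),l\rangle=T^{(i)}(\xi,\dots,\xi)$, where $T^{(i)}(\eta_0,\dots,\eta_{i-1})=\sum_j\int_S\langle h_j(s),R_{i,s}(\eta_0(s),\dots,\eta_{i-1}(s))\rangle\,dm_j(s)$ is a bounded symmetric $i$-linear form on $C_A(V)$. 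By Theorem 3.2, $(l\circ f)\abt=\sum_i\phi_i\abt$ with $\phi_i\abt(\xi)=\overline{T^{(i)}}(\xi,\dots,\xi)$; and since the embedding $B(W)\to C(W)''$ is isometric (Lemma 4.2), $\sum_i\langle Q_i(\xi),l\rangle=\langle H^\circ(\xi),l\rangle$ for $\xi\in\Gamma$. Thus everything reduces --- summand by summand, by linearity of $T\mapsto\bar T$ --- to the following claim.

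\emph{Claim.} If $\sigma_s:V_s^{\oplus k}\to\bC$, $s\in S$, are bounded symmetric $k$-linear forms depending Borel-measurably on $(s,v_0,\dots,v_{k-1})$, and $m\in M(S\setminus A)$, then the bounded $k$-linear form $T(x_0,\dots,x_{k-1})=\int_{S\setminus A}\sigma_s(x_0(s),\dots,x_{k-1}(s))\,dm(s)$ on $C_A(V)$ has $\bar T(\xi_0,\dots,\xi_{k-1})=\int_{S\setminus A}\sigma_s(\xi_0(s),\dots,\xi_{k-1}(s))\,dm(s)$ for all $\xi_0,\dots,\xi_{k-1}\in B_A(V)$. --- This is where the work lies and where the only genuine subtlety occurs. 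For $k=1$, $\sigma$ is a bounded Borel section of $V^*$ and $\bar T$ is the second transpose of $T$, so what is needed is that $\eta\mapsto\int_{S\setminus A}\langle\sigma,\eta\rangle\,dm$ lies in $C_A(V)'$ and that its canonical extension acts on $\xi\in B_A(V)$ by the pairing $(4.4)$; this is a mild strengthening of Lemma 4.1, whose proof (localize by a partition of unity over trivializing opens and write $\sigma$ in a continuous frame with bounded Borel coefficients, noting that a bounded Borel density times a regular measure is regular) goes through with $\sigma$ only Borel rather than continuous.

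For the inductive step, $(3.1)$ gives $\bar T(\xi_0,\dots,\xi_k)=\bar T_{\xi_k}(\xi_0,\dots,\xi_{k-1})$, and feeding $\xi_k\in B_A(V)$ into the second transpose of the linear form $y\mapsto\int_{S\setminus A}\langle\sigma_s(x_0(s),\dots,x_{k-1}(s),\cdot),y(s)\rangle\,dm$ yields, by the strengthened Lemma 4.1, the value $\int_{S\setminus A}\sigma_s(x_0(s),\dots,x_{k-1}(s),\xi_k(s))\,dm$ --- that is, $T_{\xi_k}$ again has the shape in the Claim, now with $\sigma_s$ replaced by $\sigma_s(\,\cdot\,,\dots,\,\cdot\,,\xi_k(s))$. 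This is precisely why the Claim must be stated for Borel $\sigma_s$: substituting the Borel section $\xi_k$ destroys continuity in $s$ after a single step, so the induction cannot be carried out within the class of continuous data --- one must allow Borel fibrewise forms from the outset, which in turn forces the Borel version of Lemma 4.1 above. The induction hypothesis applied to $T_{\xi_k}$, together with the symmetry of $\sigma_s$, closes the step; and then summing over $i$ gives $(l\circ f)\abt(\xi)=\sum_i\langle Q_i(\xi),l\rangle=\langle H^\circ(\xi),l\rangle$ for every $\xi\in\Gamma$, which is what we wanted.
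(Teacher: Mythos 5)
Your proof is correct, but it takes a genuinely different route from the paper's. The paper proves (4.6) by expanding $H$ fiberwise into homogeneous pieces $H_k$, associating with each a symmetric fiberwise $k$-linear bundle map $J:V^{\oplus k}\to W$, and invoking Lemma 4.4: $(J^\circ|C_A(V)^{\oplus k})\abt = J^\circ$. That lemma is proved by a soft density argument --- one checks $J^\circ$ and its Aron--Berner extension are both ``slightly continuous'' for the weak* topology inherited from $C_A(V)''$, and they agree on the dense subset $C_A(V)^{\oplus k}$. You instead pair $H^\circ$ against an arbitrary $l\in C(W)'$, reduce to scalar multilinear forms of the shape $T=\int_{S\setminus A}\sigma_s(\,\cdot\,(s),\dots,\,\cdot\,(s))\,dm$, and compute $\bar T$ directly by induction on $k$ using the recursion (3.1), feeding each new Borel argument through a ``Borel version'' of the $P$-pairing (4.4). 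Both routes work; yours is more computational but isolates very concretely why Borel fiberwise data are forced on you (the first substitution $\xi_k\in B_A(V)$ already destroys continuity of the induced density $\sigma_s(\cdot,\dots,\cdot,\xi_k(s))$), whereas the paper's density-of-$C_A(V)$ argument absorbs the Borel data one slot at a time and never has to restate Lemma 4.1 for Borel densities. Your ``strengthened Lemma 4.1'' does hold and is needed in your scheme: it amounts to noting that $T(\eta)=\int\langle\sigma,\eta\rangle\,dm$ with $\sigma\in B(V^*)$ corresponds locally to $\sum_i g_i\otimes(a_im)$ with continuous $g_i$ and bounded Borel $a_i$, and that $a_im$ remains regular (being absolutely continuous with respect to a regular measure with bounded Borel density), so the pairing $P(\xi,T)$ is again $\int\langle\sigma,\xi\rangle\,dm$ --- you should make that regularity remark explicit, as (4.4) is only defined via regular measures. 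Two small remarks: you do not actually address the first assertion (holomorphy of $H^\circ$ on $\Gamma$), which the paper obtains from Lemma 1.3 and which you then use tacitly to justify the expansion $H^\circ=\sum Q_i$; and the paper's alternative of proving (4.6) only near $0$ and then invoking analytic continuation on the connected ball $\Gamma$ lets one sidestep the uniform-estimate bookkeeping you carry out near the boundary of $\Gamma$.
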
 

To prove Theorem 4.3 we need a lemma.
\begin{lem}
Suppose $J:V^{\oplus k}\to W$ is a continuous fiber map, fiberwise $k$--linear. Define a $k$--linear operator 
$J^\circ:B_A(V)^{\oplus k}\ni(\eta_1,\dots,\eta_k)\mapsto J\circ(\eta_1\times\dots\times\eta_k)\in B_A(W)$. Then 
\begin{equation*}
(J^\circ|C_A(V)^{\oplus k})\abt=J^\circ.
\end{equation*}
\end{lem}
\begin{proof}
In this proof we will use the topologies on $B_A(V),B_A(W)$ inherited from the weak* topologies on
$C_A(V)'',C_A(W)''$. By construction, these are induced by the pairings 
$P$ in (4.4), respectively the corresponding $Q:B_A(W)\times C_A(W)'\to\bC$.
First we show that $J^\circ(\eta_1,\dots,\eta_k)\in B_A(W)$ depends continuously on each $\eta_j\in B_A(V)$.
Let's prove for $j=1$; fix $\eta_2,\dots,\eta_k$.  We need to show that 
$Q\big(J^\circ (\eta_1,\dots\,\eta_k),l\big)$, for any $l\in C_A(W)'$, depends continuously on $\eta_1$. The map
\[
K:V\ni v\mapsto J\big(v,\eta_2(s),\dots,\eta_k(s)\big)\in W,\qquad v\in V_s,\quad s\in S,
\]
is a bounded Borel homomorphism $V\to W$, with adjoint $K^*:W^*\to V^*$. If $l$ corresponds to
$\sum_i h_i\otimes m_i\in C(W^*)\otimes_{C(S)}M(S\setminus A)$, cf. (4.3), then
\begin{align*}
Q\big(J^\circ(\eta_1,\dots,\eta_k),l\big)=
\int_{S\setminus A}\sum_i\langle h_i,K\circ\eta_1\rangle\,dm_i
=\int_{S\setminus A}\sum_i\langle K^*\circ h_i,\eta_1\rangle\, dm_i.
\end{align*}
The last expression is $l_1(\eta_1)$ with a certain $l_1\in C_A(V)'$, and continuity follows.

In particular, $J^\circ$ is slightly continuous, as is $(J^\circ |C_A(V)^{\oplus k})\abt$ by Lemma 3.1. Since the 
two agree on $C_A(V)^{\oplus k}$, and $C_A(V)$ is dense in $B_A(V)$, they agree everywhere.
\end{proof}

\begin{proof}[Proof of Theorem 4.3]  By restricting to components of $S$, we can assume that
$V$ and $W$ have equidimensional fibers. Then Lemma 1.3 implies that $H^\circ$ even has a
holomorphic extension to a neighborhood of $\Gamma$ in $B(V)$. 

Next we turn to (4.6). By translations we can arrange that $\xi_0=0$ and 
$H^\circ(\xi_0)=0$. In a neighborhood of the zero section of $V$ expand $H$ in a uniformly convergent 
series $H=\sum_{k=0}^\infty H_k$, with fiberwise $k$--homogeneous $H_k:V\to W$. Each $H_k$ is 
induced by a symmetric $J:V^{\oplus k}\to W$ as in Lemma 4.4, $H_k(v)=J(v,\dots,v)$ for $v\in V$. By the
lemma and by (3.5), in a neighborhood of $0\in\Gamma$
\[
\big(H^\circ|\Gamma\cap C_A(V)\big)\abt=\sum_k \big(H_k^\circ|\Gamma\cap C_A(V)\big)\abt=
\sum_k H_k^\circ=H^\circ.
\]
By analytic continuation therefore $\big(H^\circ|\Gamma\cap C_A(V)\big)\abt=H^\circ$ on all of 
$\Gamma$.
\end{proof}

We return to the notion of regularity, Definition 3.3.
\begin{lem} 
The spaces $C_A(V)$ are regular.
\end{lem}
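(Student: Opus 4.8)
The plan is to realize each $C_A(V)$ as a complemented subspace of a $C^*$-algebra and then to combine \"Ulger's theorem \cite{U87} with the elementary fact that regularity passes to complemented subspaces.

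I would begin with two preliminary remarks. First, regularity in the sense of Definition 3.3 depends only on the underlying topological vector space: by (3.2) the extension $\bar T$ is determined from $T$ purely through the weak* topologies of $\fX''$ and $\fY''$, neither of which changes if the space is given an equivalent norm. Since over the compact space $S$ any two continuous fibrewise norms on $V$ are equivalent, and hence induce equivalent norms on $C_A(V)$, we may use whatever norm on $V$ is convenient. Second, if $F$ is a complemented closed subspace of a regular Banach space $E$, with bounded projection $\pi\colon E\to F$, then $F$ is regular. Indeed, given a bounded bilinear $\phi\colon F\oplus F\to\bC$, the map $\Phi(a,b)=\phi(\pi a,\pi b)$ is a bounded bilinear form on $E\oplus E$, so $\bar\Phi_1(\xi,\eta)=\bar\Phi(\eta,\xi)$ for all $\xi,\eta\in E''$; and since $\pi''\colon E''\to F''$ is weak*-continuous (it is an adjoint) and satisfies $\pi''\circ j''=\id_{F''}$ for the inclusion $j\colon F\hookrightarrow E$, unwinding the iterated-limit description (3.2) of $\bar\Phi$ and $\bar\phi$ shows that $\bar\Phi$ and $\bar\Phi_1$ restrict on $F''\oplus F''$ to $\bar\phi$ and $\bar{\phi_1}$. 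Hence $\bar{\phi_1}(\xi,\eta)=\bar\phi(\eta,\xi)$ for $\xi,\eta\in F''$, which is the regularity of $F$.

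Next I would exhibit the ambient $C^*$-algebra. Let $r$ be the rank of $V$, write $\underline{\bC}=S\times\bC$, and fix a Hermitian metric on the rank $(r+1)$ bundle $V\oplus\underline{\bC}$, so that each fibre $\End(V_s\oplus\bC)$ becomes a $C^*$-algebra isomorphic to $M_{r+1}(\bC)$. Then $\cA=C_A\big(\End(V\oplus\underline{\bC})\big)$, the continuous sections vanishing on $A$ equipped with fibrewise multiplication and involution and the sup norm, is a $C^*$-algebra. Let $p$ and $q$ be the self-adjoint sections of $\End(V\oplus\underline{\bC})$ that are fibrewise the orthogonal projections onto $V_s$ and onto $\bC$; they are multipliers of $\cA$, and $\pi\colon a\mapsto paq$ is a norm-one idempotent of $\cA$ whose range $p\cA q$ is isometrically $C_A\big(\Hom(\underline{\bC},V)\big)=C_A(V)$, since $\Hom(\underline{\bC},V)$ is canonically $V$. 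Thus $C_A(V)$, renormed equivalently, is a complemented subspace of the $C^*$-algebra $\cA$. (Alternatively, since $S$ is compact, $V$ is a subbundle of a trivial bundle $\underline{\bC^N}$, and then $C_A(V)$ is complemented in $C_A(S,\bC^N)\cong C_0\big((S\setminus A)\times\{1,\dots,N\}\big)$, a commutative $C^*$-algebra; either description will do.)

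Finally, by \"Ulger's theorem \cite{U87} the $C^*$-algebra $\cA$ is regular, so by the second remark $C_A(V)$ is regular, and by the first remark this conclusion is independent of the norm originally fixed on $V$. The substantive input here is \"Ulger's theorem; the only step requiring a genuine check is the assertion in the second remark that $\bar\Phi$ and $\bar\Phi_1$ restrict correctly on $F''\oplus F''$, which is routine once (3.2) is written out, using only that $\pi\circ j=\id_F$ (whence $\pi''\circ j''=\id_{F''}$) together with the weak*-continuity of $\pi''$.
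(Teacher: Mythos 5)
Your proof is correct but takes a genuinely different route from the paper's. Both arguments rest on \"Ulger's regularity of $C^*$-algebras, but the reduction step differs. The paper observes that $C_A(S)^{\oplus q}\approx C_{A_q}(S_q)$ is itself a $C^*$-algebra, exhibits $C_A(V)$ as a \emph{quotient} of it via $(f_1,\dots,f_q)\mapsto\sum f_i\xi_i$ for a spanning family $\xi_1,\dots,\xi_q\in C(V)$, and then cites \"Ulger's Corollary~2.4 for the fact that regularity passes to quotients. You instead realize $C_A(V)$ as a \emph{complemented subspace} of a $C^*$-algebra (either $C_A\big(\End(V\oplus\underline{\bC})\big)$ via $a\mapsto paq$, or $C_A(S,\bC^N)$ after embedding $V$ in a trivial bundle) and supply your own proof that regularity passes to complemented subspaces. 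Your heredity lemma is sound: with $j\colon F\hookrightarrow E$, $\pi\colon E\to F$, $\pi\circ j=\id_F$, one checks directly from the recursive definition (3.1) (or from (3.2)) that for $\Phi(a,b)=\phi(\pi a,\pi b)$ one has $\bar\Phi(\xi,\eta)=\bar\phi(\pi''\xi,\pi''\eta)$, so $\bar\Phi$ and $\bar\Phi_1$ restrict along $j''$ to $\bar\phi$ and $\bar{\phi_1}$, and regularity of $E$ implies that of $F$. The trade-off: the paper's route is shorter because it can quote \"Ulger for the quotient step, while yours is more self-contained (you prove the heredity lemma) at the cost of constructing the auxiliary bundle of endomorphism algebras; your alternative via $V\hookrightarrow\underline{\bC^N}$ is essentially dual to the paper's surjection $C_A(S)^{\oplus N}\twoheadrightarrow C_A(V)$. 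One small point worth noting: you correctly flag at the outset that regularity depends only on the isomorphism class of the Banach space, which justifies using whichever fibrewise norm on $V$ is convenient; the paper leaves this implicit.
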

\begin{proof}
As a C* algebra, $C_A(S)$ is regular by \cite[Theorem 2.8]{U87}. (That theorem is about regularity of
an arbitrary bilinear operator $X\times X\to X$, when $X$ is a C* algebra. However, the justification
given there applies to any bilinear operator on $X$ with values in any Banach space.)
It follows that $C_A(S)^{\oplus q}$ is also regular 
for any $q\in\bN$, because if $S_q$ is the disjoint union of $q$ copies of $S$ and $A_q\subset S_q$ is
the disjoint union of $q$ copies of $A$, then $C_A(S)^{\oplus q}\approx C_{A_q}(S_q)$. Now
$C_A(V)$ is a quotient of $C_A(S)^{\oplus q}$ for some $q$. Indeed, if
$\xi_1,\dots,\xi_q\in C(V)$ span each fiber $V_s$, then
\[
C_A(S)^{\oplus q}\ni (f_1,\dots,f_q)\mapsto\sum_{i=1}^qf_i\xi_i\in C_A(V)
\]
is surjective. Since regularity passes down to quotients according to \"Ulger, \cite[Corollary 2.4]{U87},
$C_A(V)$ is indeed regular.
\end{proof}

Putting this, Theorem 3.4, and Theorem 4.3 together we obtain:
\begin{cor} 
Suppose that in Theorem 4.3 $H^\circ(\Gamma)$ is contained in a ball $\Delta\subset B_A(W)$, 
centered at $H^\circ(\xi_0)$. If $g\in\cO(\Delta\cap C_A(W),\fY)$ is bounded, then
\begin{equation}
(g\circ H^\circ)\abt=g\abt\circ H^\circ.
\end{equation}
 \end{cor}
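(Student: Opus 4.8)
The plan is to combine the three results assembled just before the corollary, exactly as the sentence preceding it advertises. Set $\fY=\bC$ throughout (the general case follows by applying the $\bC$-case to $l\circ g$ for each $l\in\fY'$, using Goldstine's theorem as in the proof of (3.3), or one simply runs the same argument verbatim with $\bC$ replaced by $\fY$). First I would record the setup: by Theorem 4.3, $H^\circ:\Gamma\to B(W)$ is holomorphic and, since $H^\circ(\Gamma)\subset\Delta\subset B_A(W)$, in fact $H^\circ$ maps $\Gamma$ into $B_A(W)$; moreover (4.6) gives $\big(H^\circ|\Gamma\cap C_A(V)\big)\abt=H^\circ$. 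Write $f=H^\circ|\Gamma\cap C_A(V)\in\cO(\Gamma\cap C_A(V),B_A(W))$, a holomorphic map between balls in the Banach spaces $C_A(V)$ and $C_A(W)$ sending the center $\xi_0$ to the center $H^\circ(\xi_0)$ of $\Delta$. Then $f\ABt:\Gamma''\to C_A(W)''$ restricts on $\Gamma\subset B_A(V)$ to $f\abt=H^\circ$.

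Next I would apply Theorem 3.4 with $\fX=C_A(V)$, $\fY=C_A(W)$ (which is regular by Lemma 4.5), $\fZ=\bC$, the map $f$ as above, and $g$ the given bounded holomorphic function on $\Delta\cap C_A(W)$. Theorem 3.4 yields the Banach-space identity $(g\circ f)\ABt=g\ABt\circ f\ABt$ on a ball about $\xi_0$ in $C_A(V)''$. Restricting this identity to $\Gamma\subset B_A(V)$: the left side becomes $(g\circ f)\abt$ by definition of the $\abt$-notation (restriction of $\ABt$ to the Borel subspace), which is the same as $\big((g\circ H^\circ)|\Gamma\cap C_A(V)\big)\abt$; and this in turn equals $(g\circ H^\circ)\abt$ since $g\circ H^\circ$ already denotes the relevant object on $\Gamma$. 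The right side restricted to $\Gamma$ is $g\ABt\big(f\ABt(\xi)\big)=g\ABt\big(f\abt(\xi)\big)=g\ABt\big(H^\circ(\xi)\big)$ for $\xi\in\Gamma$; since $H^\circ(\xi)\in\Delta\cap B_A(W)$, this is $g\abt\big(H^\circ(\xi)\big)$ by definition of $g\abt$. That gives (4.7).

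The one genuine wrinkle is bookkeeping about domains. Theorem 3.4 delivers the composition identity only near the centers (it is proved by a homogeneous-series argument and then analytic continuation), whereas (4.7) is asserted on all of $\Gamma$; so I would note that both sides of (4.7) are holomorphic on the connected open set $\Gamma$ — the left side because $g\circ H^\circ$ is a composition of the holomorphic $H^\circ$ (Theorem 4.3) with the bounded holomorphic $g$, hence bounded, hence has a well-defined $\abt$; the right side because $g\abt\in\cO(\Delta,\fY)$ (Theorem, §3) and $H^\circ$ is holomorphic into $\Delta$ — and they agree near $\xi_0$, so they agree on $\Gamma$ by the identity theorem for holomorphic functions on Banach manifolds. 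One should also check the innocuous point that $g\circ H^\circ$ is bounded on $\Gamma\cap C_A(V)$ so that $(g\circ H^\circ)\abt$ makes sense: this holds because $g$ is bounded on $\Delta\cap C_A(W)\supset H^\circ(\Gamma\cap C_A(V))$.

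I do not expect any real obstacle here; the corollary is exactly the concatenation of Lemma 4.5 (regularity of $C_A(V)$), Theorem 4.3 (holomorphy of $H^\circ$ and the identity $\big(H^\circ|\Gamma\cap C_A(V)\big)\abt=H^\circ$), and Theorem 3.4 (naturality $(g\circ f)\ABt=g\ABt\circ f\ABt$ for $f$ between balls in regular Banach spaces), so the only work is translating between the $\ABt$-statements in §3 and the restricted $\abt$-statements, and the domain bookkeeping in the last paragraph.
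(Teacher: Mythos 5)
Your proposal is correct and is essentially the paper's own argument: the corollary is stated there without a separate proof, precisely as the concatenation of Lemma 4.5 (regularity of $C_A(W)$), Theorem 3.4 applied to $f=H^\circ|\Gamma\cap C_A(V)$, and the identity (4.6) of Theorem 4.3, followed by restriction from the bidual balls to $\Gamma$ and $\Delta$. Your extra bookkeeping (boundedness of $g\circ H^\circ$ on $\Gamma\cap C_A(V)$, and extending the near-center identity to all of $\Gamma$ by the identity theorem) fills in exactly the details the paper leaves implicit.
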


 \section{Extending germs in mapping spaces} 
 
 The naturality property in Corollary 4.6 makes it possible to generalize the Aron--Berner extension from Banach 
spaces to Banach manifolds that are mapping spaces. Given $\fx_0\in C(S,X)$ and a closed $A\subset X$, as
in section 2, 
\(
\fB=\{\fx\in B(S,X):\fx|A=\fx_0|A\},
\)
and $\fC$ is  the connected component of $\fx_0$ in$ \{\fx\in C(S,X):\fx|A=\fx_0|A\}$.

At this point we are extending germs of holomorphic functions only. Suppose $\fx_1\in\fC$, 
$\fY$ is a Banach space, and ${\bf f}:(\fC,\fx_1)\to\fY$ is a holomorphic germ. Let $D\subset X\times X$ and
$F:D\to TX$ be as in Lemma 2.1. Suppose $\fx\in\fC$ is such that $\fx_1$ is contained in
\begin{equation}
D_A(\fx)=\{\fz\in\fB:(\fx\times\fz)(S)\text{ has compact closure in }D\}\subset D(\fx),
\end{equation}
cf. (2.1). Thus $D_A(\fx)$ is a coordinate neighborhood in $\fB$, with local coordinate the
restriction of $\varphi_\fx$ of (2.5),
\begin{equation}
\psi_\fx:D_A(\fx)\ni\fz\mapsto F\circ(\fx\times\fz)\subset B_A(\fx^*TX).
\end{equation}

Say, $\bf f$ is the germ of a bounded holomorphic function $f:V\to\fY$. The pullback 
$f\circ\psi_\fx^{-1}$ is holomorphic in a neighborhood of $\psi_\fx(\fx_1)\in C_A(\fx^* TX)$; we can chose this 
neighborhood to be of form $\Gamma\cap C_A(\fx^*TX)$ with $\Gamma\subset B_A(\fx^* TX)$ a ball
centered at $\psi_\fx(\fx_1)$. As we saw in section 4, $f\circ\psi_\fx^{-1}$ then has an Aron--Berner type 
extension to a holomorphic $(f\circ\psi_\fx^{-1})\abt:\Gamma\to\fY''$. The germ of $(f\circ\psi_\fx^{-1})\abt$ at 
$\psi_\fx(\fx_1)$ is independent of which ball $\Gamma$ is chosen.
\begin{lem}
The germ of $(f\circ\psi_\fx^{-1})\abt\circ\psi_\fx$ at $\fx_1$ is independent of the choice of $\fx$.
\end{lem}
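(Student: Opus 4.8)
The plan is to compare the germs obtained from two coordinate neighborhoods $D_A(\fx)$ and $D_A(\fy)$ with $\fx_1$ in their intersection, and show the resulting germs of $(f\circ\psi_\fx^{-1})\abt\circ\psi_\fx$ and $(f\circ\psi_\fy^{-1})\abt\circ\psi_\fy$ at $\fx_1$ agree. The transition between the two charts is the map $\psi_\fy\circ\psi_\fx^{-1}$, which by (the proof of) Lemma 2.2 is holomorphic where defined on $B_A(\fx^*TX)$, and which restricts to a holomorphic map between the corresponding spaces $C_A(\fx^*TX)$ and $C_A(\fy^*TX)$ of continuous sections. Concretely, writing $\Psi(s,v)=F\big(\fy(s),\exp v\big)$ (a continuous family of maps holomorphic in the fiber variable $v$), the transition map is the operator $\Psi^\circ$ of Corollary 4.6 / Theorem 4.3, i.e.\ it has the form $H^\circ$ for a fiberwise-holomorphic $H=\Psi$ restricted to a suitable fiberwise-open $U\subset\fx^*TX$.

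The key step is then to apply Corollary 4.6 with $V=\fx^*TX$, $W=\fy^*TX$, $H=\Psi$, $\xi_0=\psi_\fx(\fx_1)$, and $g=f\circ\psi_\fy^{-1}$ restricted to a ball $\Delta\cap C_A(\fy^*TX)$ about $H^\circ(\xi_0)=\psi_\fy(\fx_1)$ containing the image of a small enough $\Gamma\cap C_A(\fx^*TX)$ under $H^\circ$. Since $f$ is bounded, $g$ is bounded, and the hypotheses of Corollary 4.6 are met: $C_A(\fy^*TX)$ is regular by Lemma 4.5, and $H^\circ(\Gamma)$ lies in $\Delta$ after shrinking. Corollary 4.6 gives $(g\circ H^\circ)\abt=g\abt\circ H^\circ$ as maps on $\Gamma$. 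Now $g\circ H^\circ=(f\circ\psi_\fy^{-1})\circ(\psi_\fy\circ\psi_\fx^{-1})=f\circ\psi_\fx^{-1}$ on $\Gamma\cap C_A(\fx^*TX)$, so $(g\circ H^\circ)\abt=(f\circ\psi_\fx^{-1})\abt$; and $g\abt=(f\circ\psi_\fy^{-1})\abt$ by definition. Thus $(f\circ\psi_\fx^{-1})\abt=(f\circ\psi_\fy^{-1})\abt\circ H^\circ$ near $\psi_\fx(\fx_1)$. Composing on the right with $\psi_\fx$ and using $H^\circ\circ\psi_\fx=\psi_\fy$ (this is just $\psi_\fy\circ\psi_\fx^{-1}\circ\psi_\fx=\psi_\fy$, valid as germs at $\fx_1$), we get $(f\circ\psi_\fx^{-1})\abt\circ\psi_\fx=(f\circ\psi_\fy^{-1})\abt\circ\psi_\fy$ as germs at $\fx_1$, which is the claim.

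The main obstacle is bookkeeping rather than conceptual: one must verify that the transition map $\psi_\fy\circ\psi_\fx^{-1}$ genuinely has the shape $H^\circ$ required by Theorem 4.3 and Corollary 4.6 — in particular that its defining $\Psi(s,v)=F\big(\fy(s),\exp v\big)$ is continuous, is fiberwise holomorphic, and maps a fiberwise-open neighborhood $U$ of $\xi_0(S)$ into $\fy^*TX$ — and then that the balls $\Gamma\subset B_A(\fx^*TX)$ and $\Delta\subset B_A(\fy^*TX)$ can be chosen so that $\xi\in\Gamma\Rightarrow\overline{\xi(S)}\subset U$ and $H^\circ(\Gamma)\subset\Delta$, centered correctly. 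All of this is routine once one recalls from section 2 that $\psi_\fx=\varphi_\fx|D_A(\fx)$ and from Lemma 2.1 the properties of $F$ and $\exp$; no new ideas are needed. The only point requiring care is that Corollary 4.6 delivers $(g\circ H^\circ)\abt=g\abt\circ H^\circ$ only after confirming regularity of the target module $C_A(\fy^*TX)$, which is exactly Lemma 4.5, and boundedness of $g$, which follows from boundedness of $f$.
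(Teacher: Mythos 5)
Your proposal is correct and follows essentially the same route as the paper: identify the transition map $\psi_\fy\circ\psi_\fx^{-1}$ with an operator $H^\circ$ coming from the fiberwise-holomorphic map $H(v)=F\big(\fy(pv),\exp v\big)$, then apply Corollary 4.6 (with regularity supplied by Lemma 4.5 and boundedness of $g=f\circ\psi_\fy^{-1}$) to get $(f\circ\psi_\fx^{-1})\abt=(f\circ\psi_\fy^{-1})\abt\circ\psi_\fy\circ\psi_\fx^{-1}$ near $\psi_\fx(\fx_1)$. The bookkeeping about choosing $\Gamma$, $\Delta$, and the neighborhood $U$ matches the paper's argument, so no gap remains.
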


\begin{proof}
Suppose $\fx_1\in D_A(\fy)$ with some $\fy\in\fC$, and $g=f\circ\psi_\fy^{-1}$ is bounded and holomorphic on 
$\Delta\cap C_A(\fy^*TX)$, where $\Delta\subset B_A(\fy^*TX)$ is a ball about $\psi_\fy(\fx_1)$. The composition
$h=\psi_\fy\circ\psi_\fx^{-1}$ is holomorphic in a neighborhood of $\psi_\fx(\fx_1)\in B_A(\fx^*TX)$; 
we can choose $\Gamma$ above contained in this neighborhood, and so that $h(\Gamma)\subset \Delta$.
(5.2) implies
\[
h(\xi)=F\circ\big(\fy\times(\exp\circ\xi)\big),\qquad \xi\in\Gamma,
\]
see (2.4). Denote the bundle projection $\fx^*TX\to S$ by $p$, and for $v\in\fx^*TX$ in a neighborhood $U$ of 
$\psi_\fx(\fx_1)(S)$ set
\[
H(v)=F\big(\fy(pv),\exp(v)\big)\in\fy^* TX.
\]
This $H$ is continuous and fiberwise holomorphic, as in Theorem 4.3, and $H^\circ$ of that theorem agrees with $h$
(on a sufficiently small $\Gamma$). By Corollary 4.6, on $\Gamma$
\[
(f\circ\psi_\fy^{-1})\abt\circ\psi_\fy\circ\psi_\fx^{-1}=g\abt\circ H^\circ=
(g\circ h)\abt=(f\circ\psi_\fx^{-1})\abt,
\]
and indeed $(f\circ\psi_\fy^{-1})\abt\circ\psi_\fy=(f\circ\psi_\fx^{-1})\abt\circ\psi_\fx$ in a neighborhood
of $\fx_1$.
\end{proof}

Thus we can make the following definition:
\begin{defn}
If ${\bf f}:(\fC,\fx_1)\to\fY$ is a holomophic germ, then ${\bf f}\abt:(\fB,\fx_1)\to\fY''$  stands for its extension
$({\bf f}\circ\psi_\fx^{-1})\abt\circ\psi_\fx$. Here $\fx\in\fC$ is arbitrary as long as  $\fx_1\in D_A(\fx)$, 
and $\psi_\fx$
is the local coordinate (5.2).
\end{defn}

\section{Banach bundles and flat norms} 

In this section we introduce/review the notions in the title. The main result, Theorem 6.3, is that over a 
simply connected Banach manifold a holomorphic Banach bundle with a flat norm is trivial. Norms on
vector bundles are also called Finsler metrics; we have already discussed them for finite rank bundles.
We will not need the notion of general norms here, and will define only flat norms.
\begin{defn} 
(a) A holomorphic Banach bundle is a holomorphic map $\pi:E\to M$ of complex Banach manifolds, with each
fiber $E_x=\pi^{-1}x$, $x\in M$, endowed with the structure of a complex vector space. It is required that
for each $x\in M$ there be a neighborhood $U\subset M$, a complex Banach space $(W,\|\,\,\|)$, and a
biholomorphism $g:\pi^{-1}U\to U\times W$ (local trivialization), that for all $y\in U$ maps $E_y$ linearly
to $\{y\}\times W$.

(b) A flat norm on $E\to M$ is a function $p:E\to[0,\infty)$ such that the local trivializations $g$ above can
be chosen to satisfy
\[
p(e)=||w||\qquad\text{if}\quad e\in\pi^{-1}U,\quad g(e)=(y,w).
\]
\end{defn}
To describe the structure of flat normed holomorphic Banach bundles we need the following result.
If $W$ is a Banach space, $\End W$ is the space of continuous linear operators
$W\to W$, endowed with the operator norm, and $\GL(W)\subset\End W$ the set of invertibles. 
\begin{lem} 
Let $U$ be a connected open subset of a complex Banach space $Z$, $(W,||\,\,||)$ another complex Banach 
space, and $F:U\to\GL(W)$ holomorphic. If $||F(\zeta)w||=||w|| $ for all $\zeta\in U$ and $w\in W$, then
$F$ is constant.
\end{lem}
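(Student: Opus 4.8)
The plan is to show that the holomorphic derivative of $F$ vanishes identically, so that $F$ is locally constant, and then invoke connectedness of $U$. Fix $\zeta_0\in U$ and a tangent vector $z\in Z$; I want to prove that the operator $A=F(\zeta_0)^{-1}DF(\zeta_0)z\in\End W$ is zero. The isometry hypothesis $\|F(\zeta)w\|=\|w\|$ for all $\zeta$ means that $\zeta\mapsto F(\zeta_0)^{-1}F(\zeta)$ is a holomorphic map from a neighborhood of $\zeta_0$ into the group of linear isometries of $W$ onto itself, sending $\zeta_0$ to the identity; differentiating along $z$ produces the one-parameter family $t\mapsto \exp(tA)$ (restricting to a complex line through $\zeta_0$ and using that a holomorphic map into $\GL(W)$ whose derivative at a point is $A$ agrees to first order with $e^{tA}$), and I claim $e^{tA}$ must be an isometry for all real $t$. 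To see this directly without the exponential, it is cleanest to argue as follows.

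Choose a complex line $\ell\colon t\mapsto \zeta_0+tz$ and consider $G(t)=F(\zeta_0)^{-1}F(\zeta_0+tz)$, holomorphic in $t$ near $0$, with $G(0)=\id$ and $\|G(t)w\|=\|w\|$ for all real (indeed complex, small) $t$ and all $w$. Since each $G(t)$ is a surjective isometry, $G(t)^{-1}=G(-t)'$ is also defined and $\|G(t)^{-1}w\|=\|w\|$. Now fix $w\in W$ with $\|w\|=1$ and fix $l\in W'$ with $\|l\|=1$ and $l(w)=1$. The scalar function $\phi(t)=l\big(G(t)w\big)$ is holomorphic near $0$, $\phi(0)=1$, and $|\phi(t)|\le\|l\|\,\|G(t)w\|=1$ for all $t$ in a real neighborhood of $0$; since $\phi$ is holomorphic and bounded by $1$ in modulus on a real segment through a point where it equals $1$, the maximum principle (applied on a small complex disc, using $|\phi|\le 1$ on the real diameter together with holomorphy) — more simply: $\phi(t)$ is holomorphic with $\Ree\phi(t)\le|\phi(t)|\le 1$ near $t=0$ real, and $\Ree\phi(0)=1$, so $\Ree\phi$ has an interior maximum, forcing $\phi$ constant $=1$ on a neighborhood. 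Hence $l\big(G(t)w\big)=1=l(w)$ for all such $t$.

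From $l\big(G(t)w\big)=l(w)$ for every norming functional $l$ of $w$ I want to conclude $G(t)w=w$. This is where the main obstacle lies: in a general Banach space a vector is not determined by its values under its own norming functionals. The fix is to differentiate in $t$ first and use the full strength of the isometry condition. Write $G'(0)=B\in\End W$. Differentiating $l(G(t)w)=l(w)$ gives $l(Bw)=0$ for every norming functional $l$ of $w$, i.e. for every $w$, $\Ree l(Bw)=0$ whenever $l$ norms $w$. By a standard consequence of the isometry property of the one-parameter family (equivalently, by the characterization of generators of isometry groups, or directly: $\|w+tBw\|=\|G(t)w\|+o(t)=\|w\|+o(t)$, so the right and left Gateaux derivatives of the norm at $w$ in the direction $Bw$ both vanish), one gets $\Ree l(Bw)=0$ for all $w$ and all norming $l$; replacing $w$ by $iw$ and using that $l$ norming $iw$ corresponds to $-il$-type functionals, one upgrades this to $l(Bw)=0$ for all norming $l$. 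Applying this with $w$ replaced by $w_1+w_2$, $w_1-w_2$, etc., and using that norming functionals separate points enough after these combinations — concretely, for any $w_1,w_2$ pick $l$ norming $w_1+w_2$; one deduces $l(Bw_1)+l(Bw_2)=0$, and running over a rich enough family of such pairs, together with the Hahn--Banach theorem, forces $B=0$.

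Since $z\in Z$ and $\zeta_0\in U$ were arbitrary, $DF\equiv 0$ on $U$, so $F$ is locally constant, hence constant by connectedness of $U$. The step I expect to be genuinely delicate is the passage from ``$l(Bw)=0$ for all $w$ and all norming functionals $l$ of $w$'' to ``$B=0$'': this is the infinitesimal version of the Mazur--Ulam/Banach--Stone circle of ideas, and the honest way to do it is to go back to the finite increments and use that each $G(t)$ (and $G(t)^{-1}$) is a surjective isometry fixing, in the limit, enough functionals — or, alternatively, to avoid the issue entirely by noting that $G(t)$ surjective isometry with $G(0)=\id$ implies, via the holomorphic functional calculus, $G(t)=e^{tB}$ with $e^{tB}$ isometric for all real $t$, whence $\|e^{tB}\|=\|e^{-tB}\|=1$ for all real $t$ forces the spectrum of $B$ to be purely imaginary and, combined with holomorphy of $t\mapsto e^{tB}$ in a complex neighborhood (so $\|e^{tB}\|$ bounded for $t$ in a complex disc), forces $B=0$ by a Liouville-type argument on $t\mapsto l(e^{tB}w)$. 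I would present the argument in this last form, as it is the shortest and uses only the one-variable maximum principle plus boundedness.
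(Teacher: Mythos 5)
Your argument is correct and genuinely useful up to a point: restricting to a complex line, setting $G(t)=F(\zeta_0)^{-1}F(\zeta_0+tz)$, and applying the maximum principle to $\phi(t)=l\big(G(t)w\big)$ (note $|\phi|\le 1$ holds on a full complex disc, not just a real segment, since the isometry hypothesis holds on all of $U$) does yield $l\big(G(t)w\big)=l(w)$, hence $l(Bw)=0$ with $B=G'(0)$, for every norming pair ($\|w\|=\|l\|=1$, $l(w)=1$). But the final step --- passing from ``the spatial numerical range of $B$ is $\{0\}$'' to ``$B=0$'' --- is a genuine gap, and you correctly sensed it. The Hahn--Banach/``rich family of pairs'' finish cannot work as written, because the functional $l$ depends on $w$, so the relations $l_w(Bw)=0$ do not linearize; indeed no purely formal bookkeeping can succeed, since the analogous real statement is false (rotation by $90^\circ$ in Euclidean $\mathbb R^2$ has $\langle Bw,w\rangle=0$ for all $w$ but $B\neq0$). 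In a complex Banach space the implication is true, but it is the Bohnenblust--Karlin/Lumer theorem ($\|B\|\le e\,v(B)$, numerical radius an equivalent norm), a nontrivial result you would have to cite or prove. Your proposed alternative finish is also flawed: $t\mapsto G(t)$ is merely a holomorphic curve of isometries, not a one-parameter group, so there is no fixed $B$ with $G(t)=e^{tB}$; moreover boundedness of $\|e^{tB}\|$ on a complex disc is automatic for any $B$, and real-parameter isometry groups have nonzero generators (e.g.\ $B=i\,\id$), so the ``Liouville-type'' conclusion does not follow from what you wrote.

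For comparison, the paper avoids numerical-range theory entirely: it averages $F(e^{2\pi i\tau}\zeta)$ against the Fej\'er kernel to show that the first-order affine family $P+\zeta Q$ (with $P=F(0)$, $2Q=\dot F(0)$) is itself isometric for small $\zeta$, passes to transposes to get $\|(P'+\zeta Q')u\|=\|u\|$ on $W'$, and then kills $Q'$ on extreme points of the dual unit ball (an extreme point cannot be the midpoint of $(P'\pm\zeta Q')u$), finishing with Krein--Milman and weak* continuity. So your route can be completed, but only by importing the numerical-radius theorem; otherwise you need a structural trick like the paper's extreme-point argument to handle exactly the obstacle you identified.
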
 
\begin{proof}
It suffices to prove when $\dim Z=1$, the general case then follows by restricting to one dimensional slices.
So, let $Z=\bC$. Denoting derivative by a dot, we will show that $\dot F(\zeta)=0$ for arbitrary $\zeta\in U$.
To simplify notation, we assume $\zeta=0$, and denote $F(0)=P$, $\dot F(0)=2Q$. Thus
$F(\zeta)=P+2Q\zeta+\dots$ for small $\zeta\in \bC$. Let
\[
K(\tau)=1+\cos 2\pi\tau=1+(e^{2\pi i\tau}+e^{-2\pi i\tau})/2\ge 0,\qquad\tau\in\bR,
\]
be the first Fej\'er kernel. For $w\in W$ and small $\zeta$
\[
||(P+\zeta Q)w||=\big\|\int_0^1 K(\tau)F( e^{2\pi i\tau}\zeta)w\, d\tau\big\|\le 
\int_0^1 K(\tau)||F( e^{2\pi i\tau}\zeta)w||\, d\tau=||w||=||Pw||.
\]
Since the left hand side is a subharmonic function of $\zeta$, the maximum principle implies
$||(P+\zeta Q)w||=||w||$, when $|\zeta|<r$ with some $r>0$.

Denote the norm on the dual space $W'$ also by $||\,\,||$, and the transpose of $P,Q$ by $P',Q'\in\End W'$;
thus  $P'$ is an isometry. Furthermore
\begin{equation} 
||(P'+\zeta Q')u||=||u||,\qquad |\zeta|<r,\, u\in W',
\end{equation}
as well. Indeed, with $\langle\,,\rangle$ the pairing between $W'$ and $W$,
\[
||(P'+\zeta Q')u||=\sup_{||w||=1}\big|\langle (P'+\zeta Q')u,w\rangle\big|=
\sup_{||w||=1}\big|\langle u,(P+\zeta Q)w\rangle\big|\le ||u||=||P'u||.
\]
Again, the left hand side is a subharmonic function of $\zeta$, and (6.1) follows.

Suppose $u$ is an extreme point of the closed unit ball $\Delta\subset W'$. Then
$P'u\in \Delta$ is also an extreme point. Comparing this with (6.1), we conclude $Q'u=0$. Since in the
weak* topology $\Delta$ is compact, and by the Krein--Milman theorem it is the closed convex hull of its extreme points, $Q'v=0$ follows for all $v\in \Delta$. Hence $\dot F(0)=2Q=0$.
\end{proof}
\begin{thm} 
Suppose $\pi:E\to M$ is a holomorphic Banach bundle over a connected, simply connected base, and
$p$ is a flat norm on $E$. Then $(E,p)$ is globally trivial: there is a Banach space $(W,||\,\,||)$ and a
biholomorphism $g:E\to M\times W$  that maps each $E_x$ linearly to $\{x\}\times W$, and satisfies
\[
p(e)=||w||\qquad\text{if}\quad e\in E,\, g(e)=(x,w).
\]
\end{thm}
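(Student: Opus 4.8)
The plan is to produce $g$ by a monodromy (parallel transport) construction, the point being that a flat norm forces the transition cocycle of $E\to M$ to be \emph{locally constant}, after which simple connectivity does the rest. First I would fix a model space. Cover $M$ by trivializing neighborhoods; on a nonempty overlap of two of them the transition map is, at each point, a linear isometry of the corresponding model spaces, so those model spaces are isometrically isomorphic. Since $M$ is connected (and locally path connected, being locally biholomorphic to open subsets of Banach spaces), chaining such isomorphisms along a path shows that all the model spaces --- equivalently, all the fibers $E_x$ with their flat norm --- are isometrically isomorphic to one Banach space $(W,\|\,\,\|)$. After post--composing each local trivialization by a fixed isometry, I may assume all local trivializations are biholomorphisms $g_\alpha:\pi^{-1}U_\alpha\to U_\alpha\times W$ carrying $p$ fiberwise to $\|\,\,\|$. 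Then for $U_\alpha\cap U_\beta\ne\emptyset$ the transition $g_\alpha\circ g_\beta^{-1}$ has the form $(y,w)\mapsto(y,F_{\alpha\beta}(y)w)$ with $F_{\alpha\beta}:U_\alpha\cap U_\beta\to\GL(W)$ holomorphic and, since both trivializations are $p$--isometric, fiberwise norm--preserving; so Lemma 6.2 (applied on each connected component of the overlap) gives that $F_{\alpha\beta}$ is locally constant.

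Next I would define parallel transport. Given $x_0\in M$ and a path $\gamma:[0,1]\to M$ from $x_0$ to $x$, compactness yields a subdivision $0=t_0<\dots<t_n=1$ with $\gamma([t_{k-1},t_k])\subset U_{\alpha_k}$ for suitable indices. Each $F_{\alpha_k\alpha_{k+1}}$ is constant near $\gamma(t_k)$, hence has a well--defined value there; composing these transition isomorphisms together with the two endpoint trivializations $g_{\alpha_1},g_{\alpha_n}$ produces a linear isometry $P^\gamma:E_{x_0}\to E_x$. The substantive step is to check that $P^\gamma$ is unchanged under refinement of the subdivision and under a homotopy of $\gamma$ rel endpoints: one subdivides a homotopy $H:[0,1]^2\to M$ into small squares each landing in a single $U_\alpha$, and verifies that passing between adjacent squares does not alter the transport precisely because the $F_{\alpha\beta}$ are locally constant. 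This bookkeeping is the one place needing care, but it is entirely standard --- it is the usual proof that a flat bundle over a simply connected base is trivial. Since $M$ is simply connected, every path from $x_0$ to $x$ gives the same isometry, which I denote $P_x:E_{x_0}\to E_x$.

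Finally I would set $g|_{E_x}=P_x^{-1}:E_x\to W$ and $g(e)=\big(\pi(e),\,g|_{E_{\pi(e)}}(e)\big)$; by construction $g$ is linear and $\|\,\,\|$--isometric on each fiber, with $p(e)=\|w\|$ when $g(e)=(x,w)$. Holomorphy is then immediate: transporting inside a single $U_\alpha$ involves no transition maps, so for $y\in U_\alpha$ one has $P_y=P_x\circ(\text{``identity in the }g_\alpha\text{--chart''})$, and consequently $g=(\id_{U_\alpha}\times c_\alpha)\circ g_\alpha$ on $\pi^{-1}U_\alpha$ for a fixed $c_\alpha\in\GL(W)$. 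Since $g_\alpha$ is a biholomorphism and $(\id\times c_\alpha)$ is a biholomorphism of $U_\alpha\times W$, $g$ is a biholomorphism on $\pi^{-1}U_\alpha$, hence a global biholomorphism $E\to M\times W$, which is the asserted trivialization.

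I expect the only real obstacle to be the homotopy--invariance bookkeeping in the monodromy step; once Lemma 6.2 delivers local constancy of the cocycle and the model space $W$ is fixed, the rest is forced. (Equivalently, one can phrase the whole argument sheaf--theoretically: the presheaf assigning to $U$ the set of $p$--isometric holomorphic trivializations $\pi^{-1}U\to U\times W$ is a sheaf of torsors under the locally constant sheaf of groups $\underline{\mathrm{Isom}(W)}$, and a torsor under a locally constant sheaf of groups over a connected, locally path connected, simply connected space admits a global section; that section is $g$.)
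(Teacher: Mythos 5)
Your proposal is correct, and it is essentially the paper's argument in a different (more explicit) dress. Both proofs pivot on Lemma 6.2 and on simple connectivity of $M$: the content is the classical statement that a flat bundle over a connected, simply connected base is trivial. The paper packages the monodromy very compactly by putting a Hausdorff topology on $E$ in which the images of horizontal sections are the basic open sets, observing that $\pi:E\to M$ is then a covering map, and quoting that a covering of a connected, simply connected space is trivial; the global horizontal sections through each point then read off the trivialization $g(e)=(\pi(e),h_e(a))$ with $W=E_a$. You instead fix the model $W$ at the outset, normalize the local trivializations to be $p$--isometric, use Lemma 6.2 to make the Čech cocycle locally constant, and carry out the parallel-transport/homotopy-invariance bookkeeping by hand (Lebesgue-number subdivision of paths and of a square homotopy). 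The paper's covering-space phrasing hides exactly that bookkeeping inside the abstract triviality of coverings, so nothing substantive differs; your more elementary route simply does explicitly what the paper invokes implicitly, and your closing sheaf-torsor remark is yet a third, equivalent formulation. One small point worth keeping in mind in your write-up: when you conclude $g=(\id\times c_\alpha)\circ g_\alpha$ on $\pi^{-1}U_\alpha$, you need $U_\alpha$ (or the relevant connected component of it) to be connected so that the ``identity in the $g_\alpha$--chart'' transport from $x$ to $y$ is well defined along a path in $U_\alpha$; this is harmless since one can always shrink to connected trivializing neighborhoods, but it should be said.
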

This is a standard consequence of Lemma 6.2. The proof depends on the following notion:
\begin{defn} 
Suppose $U\subset M$ is connected open and $E|U$ admits a trivialization 
$g:E|U\to U\times W$ as in Definition 6.1b. A section
$h$ of $E|U$ is called horizontal if $g\circ h:U\to U\times W$ is a constant section. (Lemma 6.2 implies that
horizontality of $h$ is independent of the choice of the local trivialization $g$.) A section $h$ over a general
open subset of $M$ is called horizontal if it is locally such.
\end{defn}
\begin{proof}[Proof of Theorem 6.3]
Define a Hausdorff topology on $E$ by declaring that a neighborhood basis of $e\in E$ consists of sets
of form $h(U)$, where $U\subset M$ is a neighborhood of $\pi (e)$ and $h$ is a horizontal section of
$E|U$. In this topology $\pi:E\to M$ is a covering, which must be trivial since $M$ is connected and simply
connected. This means that through every $e\in E$ there passes a unique global horizontal section $h_e$. Fix
an $a\in M$ and let $(W,||\,||)=(E_a,p|E_a)$. This is a Banach space, and
\[
g:E\ni e\mapsto \big(\pi(e),h_e(a)\big)\in M\times W
\]
is the global trivialization sought.
\end{proof}

If $E\to M$ is a holomorphic Banach bundle with a flat norm $p$ over a connected base,
the biduals of the fibers $E_x$ constitute a holomorphic Banach bundle $E''\to M$, on which the biduals of
the norms $p|E_x$ define a flat norm $p''$. Local trivializations $g$ of $(E,p)$ as in Definition 6.1b give
rise to the corresponding local trivializations $g''$ of $(E'',p'')$: $g''(y)$ is just the second transpose of $g(y)$,
$y\in U$.

These considerations lead to the Aron--Berner
extension of germs of holomorphic sections of holomorphic Banach bundles. Suppose that $\fX$ is a Banach
space, $B\subset\fX$ is a ball centered at 
$x\in\fX$,  $B''\subset \fX''$ is the bidual ball, and $(E,p)\to B''$ is a holomorphic Banach
bundle with a flat norm. Under the trivialization $g$ of Theorem 6.3, a bounded holomorphic section $f$ of 
$E|B$ induces
a bounded holomorphic section $g\circ f$ of $B\times W\to B$, i.e., a bounded $\phi\in\cO(B, W)$. The 
corresponding trivialization $g''$ of $(E'',p'')$ and the Aron--Berner
extension $\phi\ABt\in\cO(B'',W'')$, in turn, induce a bounded holomorphic section $f\ABt$ 
of $(E'',p'')$. Lemma 6.2 implies that any another norm respecting trivialization $E\to B''\times W$ is of form 
$g_1=(\id_{B''}\times T)\circ g$, with $T\in \End W$. This means that the corresponding trivialization of $E''$
is $g''_1=(\id_{B''}\times T'')\circ g''$. Since the corresponding $\phi_1=T\circ\phi$, by Theorem 3.4 
$\phi_1\ABt=T''\circ\phi\ABt$; hence $f\ABt$ is independent of the choice of $g$.

More generally, given a flat holomorphic Banach bundle $(E,p)$ over some open set in $\fX''$, and a germ
$\bf f$ of a local holomorphic section of $E|\fX$ at some $x\in\fX$, the same prescription defines 
the extension ${\bf f}\ABt$, a germ of a section of $E''$. One can even consider bundles over more general 
manifolds. Combining the above construction with the discussion of mapping spaces $\fC\subset\fB$ in
section 5, given a flat holomorphic Banach bundle $(E,p)\to\fB$, with any $E|\fC$ valued holomorphic germ
$\bf f$ at some $\fx\in\fC$ we can associate its Aron--Berner--type extension ${\bf f}\abt$, a holomorphic
germ at $\fx$, valued in $E''$. 

 \section{Analytic continuation and extension} 
 
 Consider a holomorphic Banach bundle $E\to M$ over a connected base, 
 and let $\cE\to M$ denote the sheaf of $E$ valued  
 holomorphic germs. Suppose $a<b$ are real numbers and $r:[a,b]\to M$ a path (that is, a continuous map). 
 \begin{defn}
 Given a germ ${\bf f}\in\cE$ at $r(a)$, its analytic continuation along the path $r$ is a continuous lift
 $[a,b]\ni t\mapsto {\bf f}_t\in\cE_{r(t)}$ of $r$ such that ${\bf f}_a=\bf f$.
 \end{defn}
Recall that continuity at $t\in[a,b]$ means that for all
$\tau\in[a,b]$ sufficiently close to $t$ the ${\bf f}_\tau$ are germs of the same local section of $E$
about $r(t)$. For this reason analytic continuation can be defined by a partition
$a=t_0<t_1<\dots<t_k=b$, open neighborhoods $U_j\subset M$ of $r[t_{j-1},t_j]$, $j=1,\dots,k$, and holomorphic sections $g_j$ of $E|U_j$ such that $g_j=g_{j+1}$ on a neighborhood of $r(t_j)$, and $\bf f$
is the germ of $g_1$ at $r(a)$.

This shows that analytic continuation along $r$ is unique; and that if the path is slightly perturbed, keeping its
endpoints fixed, then the continuation of $\bf f$ at $r(b)$ is not going to change. It follows that if a germ 
${\bf f}\in\cE_x$ admits
analytic continuation along every path in $M$ starting at $x$, then the analytic continuation at $y\in M$ 
depends only on the homotopy class of the path connecting  $x$ and $y$ along which we continue. In 
particular, if $M$ is also simply connected, then the analytic continuations define a holomorphic section
$f$ of $E$: $f(y)\in E_y$ will be the value at $y$ of the analytic continuation from $x$ to $y$.

Later on we will need the following. 

\begin{lem}
Suppose $P:M\to N$ is a holomorphic  and open map of complex Banach manifolds, $F\to N$ is a holomorphic Banach 
bundle, $[a,b]\ni t\mapsto x_t\in M$ is a path, and ${\bf g}_t$ are $F$ valued holomorphic germs at $P(x_t)\in N$. Let
${\bf k}_t$ be the pullback of ${\bf g}_t$ along $P$ to $x_t$, holomorphic $P^*F$ valued germs. If ${\bf k}_t$ 
depend continuously on $t$, then so do ${\bf g}_t$.
\end{lem}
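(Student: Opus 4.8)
The plan is to exploit that $P$ is open to manufacture, locally around each parameter value, holomorphic sections of $F$ over $N$ out of the continuously varying pullback sections of $P^*F$ over $M$. Fix $t_0\in[a,b]$. Since continuity of ${\bf k}_t$ means that near $t_0$ all the ${\bf k}_t$ are germs of one fixed local holomorphic section $\kappa$ of $P^*F$ defined on a neighborhood $U$ of $x_{t_0}$ in $M$, it suffices to produce from $\kappa$ a local holomorphic section $\gamma$ of $F$ defined on a neighborhood of $P(x_{t_0})$ whose pullback along $P$ agrees, near $x_{t_0}$, with $\kappa$; then for $t$ close to $t_0$ the germ ${\bf g}_t$, being the unique germ whose $P$-pullback is ${\bf k}_t=$ germ of $\kappa$ at $x_t$, must equal the germ of $\gamma$ at $P(x_t)$, and $P(x_t)\to P(x_{t_0})$ gives the desired continuity. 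A standard compactness argument on $[a,b]$ then upgrades this local statement to continuity of $t\mapsto{\bf g}_t$ throughout.

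The heart of the matter is the local construction of $\gamma$. Recall that $P^*F$ over $U$ has fibers $(P^*F)_x=F_{P(x)}$, so a section $\kappa$ of $P^*F$ is literally a map $x\mapsto\kappa(x)\in F_{P(x)}$. The first step is to pass to a local picture: shrinking $U$, trivialize $F$ over a neighborhood $U'$ of $P(x_{t_0})$ in $N$ by a biholomorphism $F|U'\approx U'\times W$, choose a chart of $N$ identifying $U'$ with an open subset of a Banach space, and a chart of $M$ identifying $U$ with an open subset of a Banach space, so that $P$ becomes a holomorphic map between open subsets of Banach spaces and $\kappa$ becomes a holomorphic map $U\to W$. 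The claim to prove is then: if $P\colon U\to U'$ is holomorphic and open, and $\kappa\colon U\to W$ is holomorphic, and $\kappa$ is constant on the fibers of $P$ (which is exactly the condition that the germs ${\bf k}_x$ for nearby $x$ are pullbacks of well-defined germs ${\bf g}_{P(x)}$ of $F$, i.e.\ depend only on $P(x)$), then $\kappa$ factors as $\gamma\circ P$ with $\gamma\colon U'\to W$ holomorphic near $P(x_{t_0})$.

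The subtle point — and the step I expect to be the main obstacle — is verifying that $\kappa$ is genuinely constant on fibers of $P$ near $x_{t_0}$, and then that the induced factorization $\gamma$ is holomorphic rather than merely set-theoretic. For the first: the hypothesis is that ${\bf k}_t$ is the $P$-pullback of a germ ${\bf g}_t$ at $P(x_t)$; since pullback of germs along $P$ is injective on germs (two germs of $F$ at the same point with equal $P$-pullbacks are equal, as $P$ is open hence its image contains a neighborhood), the germ ${\bf g}_t$ is well defined from ${\bf k}_t$, and for $t,t'$ near $t_0$ with $P(x_t)=P(x_{t'})$ one gets, from ${\bf k}_t,{\bf k}_{t'}$ both being germs of $\kappa$, that $\kappa$ has the same germ at $x_t$ and $x_{t'}$ after pushing down — but to get honest constancy on a whole fiber one argues locally along the fiber using connectedness of fibers of the open map near a point, or more robustly one defines $\gamma$ directly by $\gamma(y)=\kappa(x)$ for any $x\in P^{-1}(y)\cap U$ and checks well-definedness and holomorphy by choosing, via the open mapping theorem / a local holomorphic right inverse or at least a local holomorphic section $\sigma\colon U'\to U$ of $P$ with $\sigma(P(x_{t_0}))=x_{t_0}$ when one is available, and setting $\gamma=\kappa\circ\sigma$; one then checks $\gamma\circ P=\kappa$ near $x_{t_0}$ using that $\kappa$ agrees with $\gamma\circ P$ on the image $\sigma(U')$ and both are holomorphic, invoking the identity theorem after verifying the two sides have the same germ at $x_{t_0}$, which holds since the germ of $\kappa$ at $x_{t_0}$ pulls down consistently. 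If a holomorphic local section $\sigma$ of the open map $P$ need not exist in the Banach-manifold generality at hand, the fallback is to work directly with germs: define ${\bf g}_{t_0}$ as the (unique) germ at $P(x_{t_0})$ with $P^*{\bf g}_{t_0}={\bf k}_{t_0}$, let $\gamma$ be a representative, and prove that for $t$ near $t_0$ the germ of $\gamma$ at $P(x_t)$ pulls back to the germ of $\kappa$ at $x_t$, hence equals ${\bf g}_t$ — this uses only that $\kappa$ represents all nearby ${\bf k}_t$ and that $P$-pullback on germs is injective, and it is the cleanest route, avoiding any appeal to right inverses. Continuity of $t\mapsto {\bf g}_t$ at $t_0$ is then immediate, and since $t_0$ was arbitrary we are done.
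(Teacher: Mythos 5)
Your concluding ``fallback'' argument---take a representative $\gamma$ of ${\bf g}_{t_0}$, observe that $\gamma\circ P$ represents ${\bf k}_{t_0}$ and hence agrees with $\kappa$ on a fixed neighborhood of $x_{t_0}$ containing $x_t$ for $t$ near $t_0$, and use openness of $P$ to see that pullback of germs along $P$ is injective, so that ${\bf g}_t$ is the germ of $\gamma$ at $P(x_t)$---is exactly the paper's proof and is complete. The earlier detour through fiber-constancy of $\kappa$ or a holomorphic right inverse $\sigma$ of $P$ (which need not exist for an open holomorphic map of Banach manifolds) is unnecessary, and you rightly discard it in favor of the germ argument.
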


\begin{proof}
To prove continuity at some $t\in[a,b]$, let $V\subset N$ be a neighborhood of $P(x_t)$ and $g$ a holomorphic section 
of $F|V$ whose germ at $P(x_t)$ is ${\bf g}_t$. Thus $g\circ P$ is a holomorphic section of $P^*F|P^{-1}V$ whose
germ at $x_t$ is ${\bf k}_t$. For $\tau\in[a,b]$ close to $t$, ${\bf k}_\tau$ is the germ of $g\circ P$ at $x_\tau$. As $P$
is open, this implies that the germ of $g$ at $P(x_\tau)$ is ${\bf g}_\tau$; which shows that $t\mapsto g_t$ is indeed continuous.
\end{proof}

In what follows, when we speak of analytic continuation of a germ ${\bf f}\in\cE_x$ along an arbitrary
path, we of course mean paths that start at $x$. Let $p$ be a flat norm on $E$. 
\begin{defn} 
We say that a germ ${\bf f}\in\cE_x$ admits bounded analytic continuation along every path if first, it
admits analytic continuations $t\mapsto{\bf f}_t\in \cE_{r(t)}$ along every path $r$ and second, there is a bound 
$C$ such that for all such continuations $p\big({\bf f}_t(r(t))\big) \le C$.
\end{defn}
If $M$ is simply connected, bounded analytic continuation along every path simply means that $\bf f$
extends to a holomorphic section $f$ of $E$ and $p(f)$ is bounded.

Let us return to mapping spaces of finite dimensional connected complex manifolds $X$. As earlier, given a
compact Hausdorff space $S$, a closed $A\subset S$, and a continuous $\fx_0:S\to X$, we consider the
spaces
\begin{equation} 
\fB=B(S,X,A,\fx_0)\supset\fC=C(S,X,A,\fx_0);
\end{equation}
$\fB=\{\fx\in B(S,X):\fx|A=\fx_0|A\}$ and $\fC$ is the connected component of $\fx_0$ in 
$\fB\cap C(S,X)$.
Let furthermore $E\to\fB$ be a holomorphic Banach bundle.  In section 6 we  
associated with an $E|\fC$ valued holomorphic germ $\bf f$ at some $\fy\in \fC$ 
its Aron--Berner type extension ${\bf f}\abt$, an $E''$ valued germ at $\fy$.
\begin{lem}
If $t\mapsto{\bf f}_t$ is the analytic continuation of $\bf f$ along a path in $\fC$, then $t\mapsto{\bf f}_t\abt$ 
is the continuation of ${\bf f}\abt$ along the same path.
\end{lem}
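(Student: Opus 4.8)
The plan is to reduce the statement to a local assertion: since analytic continuation is a purely local notion (determined by a chain of local sections agreeing near the relevant points), it suffices to show that along a single short subpath, contained in one coordinate chart, the extension operation ${\bf f}\mapsto{\bf f}\abt$ carries the germs ${\bf f}_t$ into germs ${\bf f}_t\abt$ that depend continuously on $t$ in the same sense, i.e.\ that locally they are all germs of one and the same holomorphic section of $E''$. So first I would cover the path $r:[a,b]\to\fC$ by finitely many parameter intervals $[t_{j-1},t_j]$ such that $r[t_{j-1},t_j]$ lies in a coordinate neighborhood $D_A(\fx^{(j)})$ as in (5.1), with the additional property that the single holomorphic section $g_j$ of $E|U_j$ realizing the continuation on that subpath has bounded $p$--norm (shrinking $U_j$ if necessary). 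It is enough to treat one such subinterval.

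Second, on that subinterval I would pass to the local coordinate $\psi_{\fx}$ of (5.2), trivialize $E$ by Theorem 6.3 over a ball $\Gamma\subset B_A(\fx^*TX)$, and thereby realize $g_j\circ\psi_\fx^{-1}$ as a bounded holomorphic $W$--valued function $\phi$ on $\Gamma\cap C_A(\fx^*TX)$. For each $t$ in the subinterval the germ ${\bf f}_t$ corresponds (via $\psi_\fx$ and $g$) to the germ of $\phi$ at $\psi_\fx(r(t))\in C_A(\fx^*TX)$; its Aron--Berner extension ${\bf f}_t\abt$ corresponds, by Definition 5.2 together with the bundle construction at the end of section 6, to the germ of $\phi\abt=(\phi\circ\text{incl})\ABt|_\Gamma$ at $\psi_\fx(r(t))$, now viewed as a point of $B_A(\fx^*TX)\subset C_A(\fx^*TX)''$. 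But $\phi\abt$ is a single holomorphic function on $\Gamma$ valued in $W''$, independent of $t$; hence all the germs ${\bf f}_t\abt$, $t\in[t_{j-1},t_j]$, are germs of one fixed holomorphic section of $E''$ near the points $r(t)$. The map $t\mapsto\psi_\fx(r(t))$ is continuous, so $t\mapsto{\bf f}_t\abt$ is a continuous lift of $r$ over this subinterval; and ${\bf f}_a\abt={\bf f}\abt$ by construction, so by uniqueness of analytic continuation $t\mapsto{\bf f}_t\abt$ is the continuation of ${\bf f}\abt$.

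The main point requiring care—and the step I expect to be the real obstacle—is the compatibility at the interface parameters $t_j$: I must check that the germ of $\phi_j\abt$ at $r(t_j)$ agrees with the germ of $\phi_{j+1}\abt$ there, even though the two are computed in different charts $\psi_{\fx^{(j)}}$, $\psi_{\fx^{(j+1)}}$ and different bundle trivializations. This is exactly the content of Lemma 5.1 (well--definedness of ${\bf f}\abt$ independently of the chart), whose proof rests on the naturality Corollary 4.6 (and Theorem 3.4): applying it with $H$ the transition map between the two charts, $(g_j\circ\psi_{\fx^{(j)}}^{-1})\abt$ and $(g_{j+1}\circ\psi_{\fx^{(j+1)}}^{-1})\abt$ become the Aron--Berner extensions of functions related by composition with $H^\circ$, and Corollary 4.6 gives $(g\circ H^\circ)\abt=g\abt\circ H^\circ$, forcing the two germs to coincide near $r(t_j)$. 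One also needs Lemma 6.2 to know that different norm--respecting trivializations of $E$ differ by $\id\times T$, so that the corresponding change on the $E''$ side is $\id\times T''$ and, by Theorem 3.4, leaves $\phi\abt$ unchanged up to the same $T''$. Once these coherence checks are in place the pieces glue and the lemma follows. The remaining verifications (that each $g_j$ can be taken $p$--bounded on a small enough $U_j$, that $\psi_\fx\circ r$ is continuous, that boundedness of the continuations of ${\bf f}$ lets one apply the section~4/5 machinery on each piece) are routine.
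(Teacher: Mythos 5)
Your reduction (cover the path by subintervals lying in single charts, trivialize $E$ by Theorem 6.3, pass to a bounded $W$--valued function $\phi$ on $\Gamma\cap C_A(\fx^*TX)$) is the same as the paper's, and the ``interface'' issue you single out as the main obstacle is in fact the easy part: chart--independence of ${\bf f}\mapsto{\bf f}\abt$ is already Lemma 5.1/Definition 5.2, so once the lemma is known for subpaths inside one chart the pieces concatenate with no further coherence check. The genuine gap is in the step you treat as definitional. You assert that for every $t$ in the subinterval the germ ${\bf f}_t\abt$ ``corresponds, by Definition 5.2, to the germ of $\phi\abt$ at $\psi_\fx(r(t))$,'' where $\phi\abt$ is the Aron--Berner extension computed from one fixed ball $\Gamma$. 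But Definition 5.2 (via section 4 and Theorem 3.2) computes ${\bf f}_t\abt$ from a ball \emph{centered at} $\psi_\fx(r(t))$: the extension is built from the homogeneous expansion of $\phi$ at that center. For a non--concentric smaller ball $B_t\subset\Gamma$ there is no a priori reason that $\big(\phi|B_t\cap C_A(\fx^*TX)\big)\abt$ equals $\phi\abt|B_t$; establishing this requires re--expanding the series at the new center, extending term by term, and controlling the rearrangement with the estimate (3.3). This compatibility of the extension with change of center (equivalently, with analytic continuation inside a single ball) is exactly the nontrivial content of the lemma, and it is the point the paper does not reprove but imports from Zalduendo \cite[Section 2, Definition 3, Theorem 1]{Z90}. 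As written, your argument assumes at this step precisely what is to be proved; to close the gap you must either cite Zalduendo's theorem, as the paper does, or supply the re--expansion argument yourself.
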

\begin{proof}
It suffices to prove when the path is contained in a coordinate neighborhood. Then we might as well
assume that $\fC$ is a ball in a Banach space $\fX$, $\fB\subset \fX''$ is a ball in a subspace,
$E\to\fB$ is trivial, and so $E$ and $E''$ valued germs $\bf f$ correspond to germs 
$\bf g$ valued in fixed Banach spaces $\fY,\fY''$. Now a special case of a result of Zalduendo, that concerns
the Aron--Berner extension from a Banach space to its bidual, \cite[Section 2, Definition 3, Theorem 1]{Z90},
implies that for $\fY$ valued germs $\bf g$ 
the analytic continuation of ${\bf g}\abt$ is given by ${\bf g}_t\abt$, and this proves the lemma.
\end{proof} 
The following theorem generalizes Theorem 0.1:
\begin{thm} 
Suppose that a holomorphic Banach bundle $E\to\fB$ is endowed with a flat norm $p$ (Definition 6.1).
Assume that an $E|\fC$ valued holomorphic germ $\bf f$ at some $\fx\in\fC$  admits bounded analytic 
continuation along any path in $\fC$. Then the $E''$ valued germ ${\bf f}\abt$ admits
 analytic continuation along any path in $\fB$, with the same bound as the continuations within $\fC$.
\end{thm}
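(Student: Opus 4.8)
The plan is to reduce the general assertion, by means of the construction of section~8, to the case in which $X$ is a ball in $\bC^n$; for such $X$ the statement is internal to the Aron--Berner theory of section~4 together with the Banach space facts of sections~3 and~7. Throughout let $p$, $p''$ be the flat norms on $E$, $E''$, and $C$ the bound from the hypothesis. It suffices to fix one path $r\colon[0,1]\to\fB$ with $r(0)=\fx$ and to produce an analytic continuation of ${\bf f}\abt$ along $r$ whose value at each $r(t)$ has $p''$--norm $\le C$; uniqueness of analytic continuation then shows that the continuations obtained for different paths fit together.

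First I would invoke section~8 to obtain a ball $G\subset\bC^n$, a point $v_0\in G$, and a continuous family $\Phi\colon S\times G\to X$, holomorphic in the $G$ variable, with $\Phi(\cdot,v_0)=\fx_0$, arranged so that the induced holomorphic map $\Phi^\circ\colon B(S,G,A,v_0)\to\fB$, $\Phi^\circ(\sigma)(s)=\Phi\big(s,\sigma(s)\big)$ --- holomorphic by Lemma~2.3 --- is an open map with local holomorphic sections whose image contains $r\big([0,1]\big)$. Since $\Phi^\circ$ carries the constant section $v_0$ to $\fx_0$ and $C(S,G,A,v_0)$, being convex, is connected, $\Phi^\circ$ maps $C(S,G,A,v_0)$ into $\fC$. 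Because $\Phi^\circ$ has local holomorphic sections, $r$ lifts to a path $\sigma\colon[0,1]\to B(S,G,A,v_0)$ with $\Phi^\circ\circ\sigma=r$, and since $\fx$ is joined to $\fx_0=\Phi^\circ(v_0)$ inside $\fC$ we may take $\sigma(0)$ to be a continuous section. Pulling back, $\tilde E=(\Phi^\circ)^*E$ is a holomorphic Banach bundle over $B(S,G,A,v_0)$ with flat norm $\tilde p$ obtained by composing $p$ with the bundle map; under the identification of pullback fibres, $(\tilde E)''=(\Phi^\circ)^*(E'')$ and $\tilde p''$ agree with $E''$ and $p''$. The germ $\tilde{\bf f}=(\Phi^\circ)^*{\bf f}$ at $\sigma(0)$ admits analytic continuation, bounded by $C$ in $\tilde p$, along every path in $C(S,G,A,v_0)$, because $\Phi^\circ$ carries such paths to paths in $\fC$.

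Now everything is internal to section~4: $C(S,G,A,v_0)$ is (after translation by $v_0$) an open subset of the $C(S)$--module $C_A(S)^{\oplus n}$, $B(S,G,A,v_0)$ an open subset of $B_A(S)^{\oplus n}$, and by Lemma~4.2 the latter embeds isometrically into $(C_A(S)^{\oplus n})''$; moreover, over the convex --- hence simply connected --- base $C(S,G,A,v_0)$ the bundle $\tilde E$ is trivial by Theorem~6.3, so $\tilde{\bf f}$ is the germ of a bounded holomorphic function valued in a fixed Banach space. Applying Zalduendo's analysis of the Aron--Berner extension over open subsets of Banach spaces \cite{Z90} (the result that underlies Lemma~7.4), together with the preservation of sup norms by the Aron--Berner extension (Theorem~3.2), one obtains an analytic continuation of $\tilde{\bf f}\abt$ along every path in $B(S,G,A,v_0)$ starting at $\sigma(0)$, with $\tilde p''$--bound $C$; in particular along $\sigma$. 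Finally I would push this forward: written in charts, Corollary~4.6 gives $\tilde{\bf f}\abt=(\Phi^\circ)^*\big({\bf f}\abt\big)$ near $\sigma(0)$, and the property of being such a pullback along the open map $\Phi^\circ$ is preserved under analytic continuation --- this is the forward counterpart of Lemma~7.2, proved by the same germ chasing and uniqueness --- so the continuation of $\tilde{\bf f}\abt$ along $\sigma$ descends through $\Phi^\circ$ to an analytic continuation of ${\bf f}\abt$ along $r=\Phi^\circ\circ\sigma$, still with bound $C$ since $\tilde p''$ matches $p''$. As $r$ was arbitrary, ${\bf f}\abt$ then admits bounded analytic continuation, with bound $C$, along every path in $\fB$ starting at $\fx$.

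I expect the genuinely hard part to be the section~8 input: constructing $\Phi$ so that $\Phi^\circ$ is (essentially) a submersion whose image captures a prescribed path in $\fB$, while respecting both the basepoint condition on $A$ and the particular component $\fC$. This is the only place where the arbitrary complex manifold $X$ enters in an essential way; the rest is a transcription, through the naturality of the Aron--Berner extension, of the Banach space material of sections~3, 4 and~7. A secondary point requiring care is that the Borel--section spaces sit strictly inside the bidual $(C_A(S)^{\oplus n})''$ and not inside the ``naive'' Aron--Berner extension domain of a bounded holomorphic function on $C_A(S)^{\oplus n}$, so the isometric embedding of section~4 must be used in an essential way when applying the Banach space results.
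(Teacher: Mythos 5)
The decisive step in your argument --- the existence, for an \emph{arbitrary} prescribed path $r$ in $\fB$, of a single ball $G\subset\bC^n$ and a map $\Phi:S\times G\to X$, holomorphic in the $G$ variable, such that $\Phi^\circ:B(S,G,A,v_0)\to\fB$ is open, admits local holomorphic sections, and has image containing $r([0,1])$ --- is not what section 8 provides, and it is unsupported. Lemma 8.1 constructs something much weaker and more special: only for paths $\fy_t$ that agree with $\fx$ on $S_0=A$, are \emph{constant on the pieces $S_j$ of a finite Borel partition}, and start $\delta$-close to $\fx$, it produces a $t$-\emph{dependent} family $\Psi(t,\cdot)$ of maps of the unit ball bundle of a vector bundle $V=\psi_0^*TX\times\bC$ (not a trivial ball $S\times G$) into $X\times\bC$ (not $X$), fiberwise biholomorphic onto open subsets, hitting the path through one fixed Borel section $\fv$. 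The fiberwise injectivity --- which is what makes $\Psi^\circ_t$ a biholomorphism onto an open subset of $D_A(\fx)$ (Lemma 9.2) and lets one transport germs back and forth --- is achievable only because these special paths take finitely many values off $A$ and the extra $\bC$ factor separates those values via the distinct points $q_j$; the descent of the continuation then needs the auxiliary space $X^+=X\times\bC$, the projection $P$, vertical vector fields and Lemma 7.2 (Lemma 9.3). For a general Borel path, whose values $\fy_t(s)$ spread over $X$ in $s$-dependent ways, there is no reason that any $\Phi(s,\cdot):G\to X$ with the required surjectivity onto the relevant values can be chosen free of critical points there, so ``local holomorphic sections'' of $\Phi^\circ$, the continuous lift $\sigma$ of $r$ with relatively compact image, and the continuous choice of $\sigma(0)$ over $\fx$ are all claims with no argument behind them; your own closing remark concedes this is ``the genuinely hard part,'' but the proposal offers no construction.

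Because no such global $\Phi$ is available, the paper needs a second, independent reduction that your sketch has no counterpart of: Lemma 9.4 shows that the submanifolds $\fB_\Pi\approx X^J$ of maps constant on the pieces of a partition are dense in $\fB$ in a strong sense, and that holomorphic sections defined compatibly on all $\fB_\Pi$, \emph{uniformly bounded by $C$}, patch (via Cauchy estimates and locally uniform limits) to a section on open subsets of $\fB$; part (d) then converts continuation along paths inside the various $\fB_\Pi$ into continuation along arbitrary paths in $\fB$. This is where the uniform bound in the hypothesis does real work beyond the norm-preservation of the Aron--Berner extension, and it is the mechanism that replaces the global $\Phi$ you postulate. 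The remaining ingredients of your plan (pulling back $(E,p)$, trivializing over a convex base by Theorem 6.3, applying the section 4 extension and Corollary 4.6, and descending pullback germs through an open map as in Lemma 7.2) do match the paper's use of these tools inside Lemmas 9.1 and 9.3, but as written the proof has a genuine gap at its central step and is missing the approximation argument that the paper uses precisely because that step cannot be carried out in the generality you assume.
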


The proof will be given in section 9, after preliminary work in section 8.

 \section{The key construction}

 The key to the proof of Theorem 7.5 is a construction of a family of holomorphic maps of balls into $X$,
 with the help of which we can pull back analytic continuation along certain paths in $\fB$ to continuation in a
 Banach space. Endow $X$ 
with a metric $d$, then, as in section 2, metrize the space $B(S,X)$ by the metric
\begin{equation*}
d_S(\fx,\fy)=\sup \{d\big(\fx(s),\fy(s)\big):\, s\in S\}.
\end{equation*}  

\begin{lem}
Given $\fx\in C(S,X)$ and  $\varepsilon>0$, there is a $\delta>0$ with the following property. 
Suppose $S=\coprod_0^J S_j$ is a Borel partition, $S_0=A$ is closed, $a<b$ are real numbers, and
$[a,b]\ni t\mapsto\fy_t\in B(S,X)$ is a continuous path. Suppose furthermore that $\fy_t=\fx$ on 
$S_0$ and $\fy_t$ is
constant on $S_j$, $j=1,\dots,J$, $t\in T$. If $d_S(\fx,\fy_a)<\delta$, then there are a (finite rank) normed 
topological complex vector bundle $\pi:(V,|\,\,|)\to S$  (cf. section 4) with unit ball bundle 
$U=\{v\in V:|v|<1\}\to S$, a $\fv\in B_A(V)$, $\sup_S|\fv|<1$, and a continuous
$\Psi:[a,b]\times U\to X\times\bC$ such that for every $t\in[a,b]$ and $s\in S$
\newline\hspace*{1em} (a) $\Psi$ maps the fibers $\{t\}\times U_{s}$ biholomorphically  on open subsets
of  $X\times\bC$;
\newline\hspace*{1em} (b) the zero vector $0_{s}\in V_{s}$ satisfies $\Psi(t,0_{s})=\big(\fx(s),0\big)$;
\newline\hspace*{1em} (c) with $\text{pr}_X:X\times\bC\to X$ the projection, 
$\text{pr}_X\circ\Psi(t,\fv)=\fy_t$;
\newline\hspace*{1em} (d) $d\big(\fx(s),\text{pr}_X(\Psi(a,v))\big)< \varepsilon$ for $v\in U_{s}$.
\end{lem}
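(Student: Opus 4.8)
The plan is to build the bundle $V$ and the family $\Psi$ by patching together local exponential-type charts for $X$ along the path $t\mapsto\fy_t$, using that the path stays uniformly close to the fixed map $\fx$ once $\delta$ is small. First I would choose, via Lemma 2.1, a neighborhood $D\subset X\times X$ of the diagonal and the diffeomorphism $F:D\to TX$ with properties (a)--(c) of that lemma. Shrinking $D$ if necessary, I can assume $D\subset\{(x,y):d(x,y)<\varepsilon\}$, and I fix $\delta>0$ small enough that $d_S(\fx,\fy_a)<\delta$ forces $(\fx(s),\fy_a(s))\in D$ for all $s$; this $\delta$ depends only on $\fx$ and $\varepsilon$ (via the continuity of $d$ on the relevant compact set), as required. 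The naive guess would be to take $V=\fx^*TX\oplus(S\times\bC)$ with $\Psi(t,v)=\bigl(\exp_{\fx(s)}(v'),v''\bigr)$ where $v=(v',v'')$; this already gives (a), (b), and (d) directly from Lemma 2.1 and the choice of $D$, and is independent of $t$. The only real content is arranging (c), i.e., that a single section $\fv$ of the unit ball bundle, with $\sup_S|\fv|<1$, simultaneously represents every $\fy_t$.

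The key point is that $t\mapsto\fy_t$ is a path in $B(S,X)$ that is \emph{constant on each $S_j$}, $j\geq 1$, and equal to $\fx$ on $S_0=A$; so it is governed by finitely many paths in $X$, namely $t\mapsto y_t^j:=\fy_t(s)$ for $s\in S_j$ (independent of $s\in S_j$). Each $y^j:[a,b]\to X$ is a path in $X$ starting inside the $d$-ball of radius $<\varepsilon$ about $\fx(S_j)$; but it need not stay inside any fixed exponential chart. To handle this I would, for each $j$, cover the compact set $y^j([a,b])$ by finitely many of the charts $D^x$ and build, by composing consecutive charts along a partition $a=\tau_0<\dots<\tau_{m_j}=b$, a continuous family of biholomorphisms from a \emph{fixed} ball in a fixed finite-dimensional vector space $V_j$ onto shrinking open neighborhoods in $X\times\bC$ of the points $y_t^j$, sending $0$ to $\fx(s_j)$ (here the extra $\bC$ factor gives room: one rescales so that $\fv$ has norm bounded away from $1$ uniformly). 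Then I set $V=\coprod_j (S_j\times V_j)$ with the obvious Borel-measurable, in fact topological, vector bundle structure over $S$ (refining the partition into finitely many closed pieces and using that $S$ is compact Hausdorff so the $S_j$ can be taken as in section~4), put the product norm, let $\Psi$ on $\{t\}\times(S_j\times V_j)$ be the chart constructed for that $j$ (and on $S_0=A$ the frozen chart $\exp_{\fx(s)}$), and let $\fv$ be the section whose value on $S_j$ is the preimage of $y_t^j$ under the chart --- which is well-defined and $t$-independent as an element of $V_j$ only after we check consistency, so more precisely $\fv$ is defined at the \emph{start} and the family $\Psi$ absorbs the $t$-dependence. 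Properties (a),(b) are built in; (c) holds by construction; (d) holds because at $t=a$ the chart for $S_j$ is the initial exponential chart $D^{\fx(s)}$, whose image lies in $\{d(\fx(s),\cdot)<\varepsilon\}$.

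The main obstacle I expect is the \emph{uniform} control needed to produce a genuine \emph{bundle} $V\to S$ and a genuine continuous $\Psi$ on $[a,b]\times U$, rather than just fiberwise data: the partition $S=\coprod S_j$ is arbitrary (with possibly infinitely many or very small pieces if $J$ is large), so one must take care that the vector spaces $V_j$, the balls, the norm, and the bound $\sup_S|\fv|<1$ do not degenerate as $j$ varies. The remedy is that $\delta$ is chosen first, \emph{before} the partition, and each path $y^j$ starts $\delta$-close to $\fx$; the length and geometry of $y^j$ inside $X$ can be arbitrary, but that only affects the number $m_j$ of composed charts for that $j$, not the rank of $V_j$ (which can be taken to be $\dim_\bR X+2$ throughout) nor the quantitative separation of $\sup|\fv|$ from $1$ (obtained by the extra $\bC$-coordinate and a fixed rescaling). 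A secondary nuisance is verifying that the patched-together transition maps depend continuously on $t$ and are holomorphic on fibers; this is exactly the type of statement already packaged in Lemma~2.1 and the fiberwise-holomorphic composition lemmas of section~2, so it reduces to routine bookkeeping once the combinatorial structure above is in place. Finally I would record that $\Psi$ maps into $X\times\bC$ rather than $X$ precisely so that the norm can be normalized; dropping the $\bC$ factor via $\mathrm{pr}_X$ recovers (c) and (d) as stated.
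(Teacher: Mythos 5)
There is a genuine gap, and it sits exactly where you located the ``real content'': arranging (c) with a single $t$-independent section $\fv$. Your mechanism --- for each $j$, compose consecutive exponential charts along a time partition to get a continuous family of biholomorphisms of a fixed ball onto neighborhoods of $y^j_t$ --- is not viable as described. A chart built from finitely many exponential charts near the moving point $y^j_t$ has image a (possibly stretched) neighborhood of $y^j_t$, but condition (b) forces $\fx(s)$ to lie in the image of the \emph{same} fiberwise biholomorphism (indeed $0_s\mapsto(\fx(s),0)$ exactly), while $\fv(s)$, at uniform distance from the boundary of the unit ball, must land over $\fy_t(s)$ for \emph{every} $t$, even when the path $y^j$ leaves every fixed chart. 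You never explain how such a family is produced, and you explicitly relegate the extra $\bC$ factor to a norm-rescaling device --- but in the paper's proof that factor is the whole mechanism: one builds (Lemma 8.4) a \emph{partly analytic} interpolating family $\psi(t,s,q)$, $q$ in an interval $I$ with marked points $q_0,\dots,q_J$, with $\psi(t,s,q_0)=\fx(s)$, $\psi(t,s,q_j)=\fy_t(s)$ on $S_j$, and $\psi(a,s,\cdot)$ staying $\varepsilon$-close to $\fx(s)$; the real-analyticity in $q$ is what allows a holomorphic extension in $q$ to a complex neighborhood (Lemma 8.2), and a $C^{0\omega}$ section $\theta$ of the bundle $\Is\subset\Hom(T_xX,T_yX)$ transports the exponential chart so that $\Phi(t,\xi,q)=(\exp(\theta(t,s,q)\xi),q)$ is fiberwise biholomorphic (injectivity is free because the second component is $q$). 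Then $V=\fx^*TX\times\bC$, and $\fv$ is zero in the tangent factor and constant $=f^{-1}(q_j)$ on $S_j$ in the $\bC$ factor: the point $\fy_t(s)$ is reached by moving in the auxiliary complex coordinate, not by any composition of charts inside $X$. Without this (or an equivalent) idea, the ``routine bookkeeping'' you defer to is precisely the unsolved part; a merely continuous interpolation in the extra variable would destroy fiberwise holomorphy, which is why the $C^{0\omega}$ machinery of Lemmas 8.2--8.4 is needed.

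Two secondary but real defects: first, your charts are constant in $s$ on each piece (``sending $0$ to $\fx(s_j)$''), whereas (b) requires $\Psi(t,0_s)=(\fx(s),0)$ with $\fx$ generally non-constant on $S_j$ (the hypothesis $d_S(\fx,\fy_a)<\delta$ only bounds its oscillation by $2\delta$); fixing this forces the chart to vary with $s$, which is what the paper's $\theta$ accomplishes. Second, $V=\coprod_j(S_j\times V_j)$ glued over Borel pieces is not a \emph{topological} vector bundle over $S$ as the statement requires --- an arbitrary Borel partition cannot be refined into finitely many closed pieces covering $S$ --- whereas your initial guess $V=\fx^*TX\oplus(S\times\bC)$, pulled back by the continuous $\fx$, is the correct (and the paper's) choice; you should have kept it and put all the $t$- and $j$-dependence into $\Psi$ and $\fv$ as above.
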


The proof takes some preparation.
Most of the construction will revolve around real analyticity.
Let $Y$ be a connected
real analytic manifold. In what follows, 
we will drop `real’ from `real analytic’. By a theorem of Grauert, $Y$ can be properly and analytically embedded 
in some Euclidean space \cite{G58}; accordingly, we will assume $Y\subset\bR^N$ is a closed analytic 
submanifold. For any $x\in\bR^N$ in some neighborhood $R$ of $Y$
 there is a unique nearest point $r(x)$ in $Y$; the map $r:R\to Y$ is an analytic retraction. 

Fix a topological space $T$ and a compact interval $I\subset\bR$. We will say that a map
$g:T\times I\to Y$ is partly analytic, $g\in C^{0\omega}(T\times I,Y)$, if $T\times I\subset T\times \bC$
has a neighborhood $O$ to which $g$ extends as a continuous function $\tilde g:O\to\bC^N$ whose
restrictions $\tilde g(t,\cdot)$ are holomorphic, $t\in T$. One can check that partial analyticity is independent
of which embedding $Y\subset\bR^N$ is chosen, and it does not even matter if $Y$ is closed in $\bR^N$, or
only in some open subset of $\bR^N$. It is clear that the composition of the partly analytic $g$ with an analytic
map $Y\to Y_1$ produces a partly analytic map.

\begin{lem}
Suppose $M,P$ are finite dimensional complex manifolds, and 
$g\in C^{0\omega}\big((T\times M)\times I,P\big)$ is such that $g(t,\cdot,q):M\to P$ is holomorphic
for each $t\in T$, $q\in I$. Then $T\times M\times I\subset T\times M\times\bC$ has a neighborhood
$O$ to which $g$ extends as a continuous $f:O\to P$, whose restrictions $f(t,\cdot,\cdot)$
are holomorphic for each $t\in T$. If $O_{t,z}=\{q\in\bC:(t,z,q)\in O\}$ are convex (that can always be achieved 
at the price of shrinking $O$), such extension $f$ is unique.
\end{lem}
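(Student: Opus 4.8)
\emph{Strategy.} The assertion is local along $T\times M\times I$, so the plan is to produce an extension with the required properties near each point of $T\times M\times I$, to prove the uniqueness assertion, and then to patch — using uniqueness for consistency and a final shrinking to force the $\bC$--fibres to be convex. Two points need care. First, the extension $\tilde g$ furnished by $g\in C^{0\omega}$ takes values in the ambient $\bC^N$ (after a real--analytic embedding $P\subset\bR^N$) and, at complex arguments, need not return to $P$; so $f$ must be built by projecting $\tilde g$ back into $P$. Second, the holomorphy of $g$ in the $M$--variable is known only along the real interval $I$, and it must first be transported off the real axis.

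\emph{Local extension.} Fix $(t_0,m_0,q_0)\in T\times M\times I$, put $p_0=g(t_0,m_0,q_0)$, and choose a holomorphic chart $\zeta:V\to\bC^d$ on $P$ around $p_0$ together with a holomorphic coordinate chart identifying a neighbourhood of $m_0$ in $M$ with a polydisc in $\bC^n$, $n=\dim M$. After embedding $P$ real--analytically into some $\bR^N$, the real--analytic map $\zeta$ extends, near $p_0$, to a holomorphic map $\hat\zeta$ of a neighbourhood of $V$ in $\bC^N$ into $\bC^d$. Shrinking so that the given extension $\tilde g$ maps a neighbourhood of $(t_0,m_0,q_0)$ into that neighbourhood and $g$ maps the real slice into $V$, the function $G:=\hat\zeta\circ\tilde g$ is continuous, $\bC^d$--valued, holomorphic in the last (complex) variable, and coincides on the real slice with $\zeta\circ g$; in particular $G(t,\cdot,q)$ is holomorphic for $q\in I$. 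Now I would propagate this last holomorphy off the real axis by a Cauchy integral in the $M$--variable: taking a polydisc $\Delta\ni m_0$ relatively compact in the chart, the function
\[
(t,m,w)\longmapsto\frac1{(2\pi i)^n}\int_{\partial_0\Delta}\frac{G(t,\zeta,w)}{(\zeta_1-m_1)\cdots(\zeta_n-m_n)}\,d\zeta ,\qquad m\in\Delta ,
\]
is continuous, and for each fixed $t$ holomorphic in each of the complex variables $m_1,\dots,m_n,w$ separately, hence by Osgood's lemma jointly holomorphic in $(m,w)$; and by Cauchy's formula it equals $G(t,m,w)$ when $w\in I$, since then $G(t,\cdot,w)$ is holomorphic on $\Delta$. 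For fixed $(t,m)$ the difference of this function and $G(t,m,\cdot)$ is holomorphic in $w$ and vanishes on $I$, hence on a convex (so connected) neighbourhood of $I$; therefore $G$ is itself holomorphic in $(m,w)$ there. Shrinking once more so that $G$ lands in $\zeta(V)$, I set $f:=\zeta^{-1}\circ G$. This is a continuous $P$--valued extension of $g$ whose $(m,w)$--slices are holomorphic (and it is \emph{not} $\tilde g$, which need not return to $P$).

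\emph{Uniqueness and patching.} If $f_1,f_2$ are two such extensions on neighbourhoods of $T\times M\times I$ with convex $\bC$--fibres, the fibres of the intersection are convex and contain $I$; fixing $(t,m)$, the holomorphic maps $f_1(t,m,\cdot),f_2(t,m,\cdot)$ of one complex variable agree on $I$, which has accumulation points, so by the identity theorem they agree on the whole fibre. Thus any two such extensions coincide on the overlap of their domains; in particular the local extensions built above from different charts are consistent, and the usual patching argument (first gluing over $I$ for a fixed base point, then over base points) produces a continuous $P$--valued extension $f$, holomorphic in $(m,w)$ for each $t$, on some open neighbourhood $O$ of $T\times M\times I$. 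Finally, $\delta(t,m):=\sup\{\varepsilon>0:\{(t,m)\}\times\{w:\mathrm{dist}(w,I)<\varepsilon\}\subset O\}$ is positive and lower semicontinuous, so $O':=\{(t,m,w):\mathrm{dist}(w,I)<\delta(t,m)\}$ is an open neighbourhood of $T\times M\times I$ contained in $O$ whose $\bC$--fibres — being $\varepsilon$--neighbourhoods of the segment $I$ — are convex; restricting $f$ to $O'$ gives the extension claimed, and uniqueness on it follows from the identity--theorem argument above.

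\emph{The main obstacle.} It is the combination of the two issues flagged at the start: returning into $P$, which is handled cleanly by composing $\tilde g$ with the holomorphic extension of a chart and then with the inverse chart; and, more substantially, upgrading the along--$I$--only holomorphy in the $M$--variable to holomorphy at nearby complex arguments — which is exactly what the Cauchy integral delivers, so that Osgood's lemma can then be applied.
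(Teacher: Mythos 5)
Your proof is correct, and it reaches the conclusion by a route that differs from the paper's in its two technical devices. For the local step the paper works in charts where $P$ is an open subset of $\bC^p$ and exploits the specific embedding $\alpha(w)=(\Ree w_1,\Imm w_1,\dots)$ of $\bC^p$ into $\bR^{2p}\subset\bC^{2p}$ together with its \emph{holomorphic} left inverse $\beta$: then $f=\beta\circ\tilde g$ is already $P$-valued after shrinking and holomorphic in the last variable, with no need to complexify anything. You instead complexify a holomorphic chart $\zeta$ of $P$ through the real-analytic embedding (your $\hat\zeta$, which should be a holomorphic extension near $p_0$ rather than "a neighbourhood of $V$") and return to $P$ via $\zeta^{-1}$; this is heavier but equally valid, and the needed extension of $\zeta$ is available via the analytic retraction the paper already uses. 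For the key upgrade of holomorphy in the $M$-variable from $q\in I$ to nearby complex $q$, the paper argues by Morera: loop integrals $\int_\Gamma f(t,z,q)\,dz_k$ are holomorphic in $q$ and vanish for $q\in I$, hence vanish identically, and Osgood finishes; you instead form the Cauchy integral over a distinguished boundary, check it is jointly holomorphic in $(m,w)$, and identify it with $G$ by the identity theorem in $w$ starting from agreement on $I$. The two arguments are dual implementations of the same mechanism (holomorphy in the interval variable plus the identity principle), so neither buys more generality, though yours makes the holomorphic candidate explicit while the paper's avoids any integral representation of the extension itself. Uniqueness and patching are handled identically in both; note that your assertion that the local extensions "are consistent" implicitly uses that the local domains can be taken of product form with $w$-fibres that are discs centred at points of $I$, so that overlap fibres are convex and meet $I$ in sets with accumulation points---the paper's "by uniqueness the $f_i$ are compatible" glosses over exactly the same point, so this is not a gap, just something to spell out if pressed. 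Your explicit convexification of the fibres via the lower semicontinuous radius $\delta(t,m)$ is more detailed than the paper's parenthetical remark and is correct.
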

\begin{proof} Assume first that $M$, $P$ are open subsets of some $\bC^m$, $\bC^p$.
With $w_j$,  $\zeta_h$ the complex coordinates on $\bC^p$, respectively $\bC^{2p}$, consider the
embedding $\alpha$ of $\bC^p$ (and so of $P$) into $\bR^{2p}\subset\bC^{2p}$, and its left inverse $\beta$,
\[
\alpha(w)= (\Ree w_1,\Imm w_1,\dots,\Ree w_p,\Imm w_p),\qquad 
\beta(\zeta)=(\zeta_1+i\zeta_2,\dots,\zeta_{2p-1}+i\zeta_{2p}).
\]
By definition, $\alpha\circ g$ extends to a continuous
$\tilde g:O\to\bC^N$  whose restrictions $\tilde g(t,z,\cdot)$ are holomorphic. It follows that,
possibly after shrinking $O$, $f=\beta\circ\tilde g:O\to P$ extends $g$ and
has holomorphic restrictions $f(t,z,\cdot)$; we can also arrange that the $O_{t,z}$ are convex.
That even the restrictions $f(t,\cdot,\cdot) $ are holomorphic will follow by Morera's and
Osgood's theorems once we show, denoting the coordinates on $\bC^m$ by $z_1,\dots,z_m$,
that  each $(t,z, q)\in O$ has a neighborhood $G\subset U$ such that
$\int_{\Gamma} f(t,z,q)\,dz_k=0$ whenever $\Gamma\subset M$ is a 
smooth loop  contained in a complex line, $\{t\}\times\Gamma\times\{q\}\subset G$, and $k=1,\dots,m$. 
But this is clear,
as the integral is a holomorphic function of $q$, vanishing when $q\in I$. 

The extension $f$ is unique by the
uniqueness theorem for holomorphic functions.

For a general $P$, cover $T\times M\times I$ by subsets $T_i\times M_i\times I_i$ 
that $g$ maps into coordinate neighborhoods on $P$, $T_i$ open,
$M_i\subset M$ coordinate neighborhoods, 
$I_i\subset I$ compact intervals. We have just constructed extensions
$f_i:O_i\to P$ to neighborhoods $O_i$ of $T_i\times M_i\times I_i$. By uniqueness the $f_i$ are
compatible and produce the extension $f$ claimed.
\end{proof}

 \begin{lem}
 If $T$ is compact and $L\subset I$ is finite, any
 $f\in C(T\times I,Y)$ can be approximated arbitrarily closely (in the uniform topology) by 
 $g\in C^{0\omega}(T\times I,Y)$ such that $f=g$ on $T\times L$.
 \end{lem}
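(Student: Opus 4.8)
The plan is to regard $f$ as an $\bR^N$-valued continuous map (via the fixed closed analytic embedding $Y\subset\bR^N$), approximate it by a map that is a \emph{polynomial} in the interval variable $q\in I$ — which is then automatically in $C^{0\omega}$ — correct the values on the finite set $T\times L$ by a Lagrange interpolation term, and finally push the result back onto $Y$ using the analytic retraction $r:R\to Y$, $R\supset Y$. Since $r$ is real analytic it extends to a holomorphic $\tilde r:\tilde R\to\bC^N$ on an open neighborhood $\tilde R$ of $R$ in $\bC^N$. Note that interpolation must be carried out \emph{before} projecting by $r$, since $r$ fixes $Y$ pointwise: a map agreeing with $f$ (which takes values in $Y$) on $T\times L$ before projection will still agree with $f$ there afterwards.

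\emph{Polynomial approximation and interpolation.} As $T\times I$ is compact, $f$ is uniformly continuous, and, after an affine identification $I\cong[0,1]$ and application of Bernstein polynomials in the interval variable to each of the $N$ coordinates, for every $\eta>0$ there is $g_0:T\times I\to\bR^N$ of the form $g_0(t,q)=\sum_{j=0}^d h_j(t)q^j$, with $h_j:T\to\bR^N$ continuous, such that $\sup_{T\times I}|f-g_0|<\eta$; the classical Bernstein error bound depends only on the modulus of continuity of $f$ on the compact set $T\times I$, hence is uniform in $t$. Being polynomial in $q$, $g_0$ extends to a continuous map $T\times\bC\to\bC^N$ holomorphic in the second variable, so $g_0\in C^{0\omega}(T\times I,\bR^N)$. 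Writing $L=\{q_1,\dots,q_m\}$ and letting $\ell_1,\dots,\ell_m$ be the Lagrange interpolation polynomials with $\ell_i(q_j)=\delta_{ij}$, set
\[
g_1(t,q)=g_0(t,q)+\sum_{i=1}^m\ell_i(q)\big(f(t,q_i)-g_0(t,q_i)\big).
\]
Then $g_1$ is again polynomial in $q$ with continuous coefficients in $t$, hence $g_1\in C^{0\omega}(T\times I,\bR^N)$; one has $g_1(t,q_i)=f(t,q_i)$ for all $t\in T$; and, since the $\ell_i$ are bounded on $I$, $\sup_{T\times I}|g_1-f|\le(1+C)\eta$ for a constant $C$ depending only on $L$ and $I$.

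\emph{Projection onto $Y$.} The compact set $f(T\times I)\subset Y$ lies in the open set $R$, so it has a compact neighborhood $K\subset R$; pick $\eta$ small enough that $(1+C)\eta$ is smaller than the distance from $f(T\times I)$ to $\bR^N\setminus K$ (forcing $g_1(T\times I)\subset K$) and also small enough that $|r(u)-r(v)|<\varepsilon$ whenever $u,v\in K$ satisfy $|u-v|\le(1+C)\eta$ — possible as $r$ is uniformly continuous on $K$. Put $g=r\circ g_1:T\times I\to Y$ (values in $Y$ since $r(R)\subset Y$). Then $g(t,q_i)=r\big(f(t,q_i)\big)=f(t,q_i)$ because $f(t,q_i)\in Y$, so $g=f$ on $T\times L$; and $\sup_{T\times I}|g-f|=\sup_{T\times I}|r\circ g_1-r\circ f|<\varepsilon$, using $r\circ f=f$. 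Finally $g\in C^{0\omega}(T\times I,Y)$: the extension $\tilde g_1$ of $g_1$ is continuous on $T\times\bC$ with $\tilde g_1(T\times I)\subset K\subset\tilde R$, so $\tilde g_1(O)\subset\tilde R$ on some neighborhood $O$ of $T\times I$ in $T\times\bC$, and then $\tilde r\circ\tilde g_1|_O$ is a continuous extension of $g$ whose restriction to each slice $\{t\}\times(\,\cdot\,)$ is holomorphic, as a composition of holomorphic maps.

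The only step that is not purely formal is the uniform-in-$t$ polynomial approximation of the interval variable; as indicated, this is supplied by Bernstein polynomials, whose error estimate depends only on the modulus of continuity of $f$ over the whole compact set $T\times I$, and so is automatically uniform over $t\in T$. Everything else is routine bookkeeping with uniform continuity of $r$ and with the definition of $C^{0\omega}$.
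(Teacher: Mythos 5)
Your proof is correct and follows essentially the same route as the paper: approximate $f$ in the ambient $\bR^N$ by a partly analytic map, correct its values on $T\times L$ by a (Lagrange) interpolation term, and push back onto $Y$ with the analytic retraction $r$, which fixes $Y$ and hence preserves the interpolation condition. The only difference is cosmetic — you use Bernstein polynomials in the interval variable where the paper smooths by Poisson-kernel convolution — and both yield the required uniform-in-$t$ $C^{0\omega}$ approximation.
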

 \begin{proof}
We will avail ourselves of the following tools. 
 There is a linear operator of interpolation, $\Lambda$, from the space $C(L,\bR^N)$ of all functions
 $L\to\bR^N$ to the space of polynomials $\bR\to\bR^N$ such that $(\Lambda\phi)|L=\phi$ for 
 $\phi\in C(L,\bR^N)$. Out of $\Lambda$ we construct 
 \[
 \bar \Lambda:C(T\times L,\bR^N)\to C^{0\omega}(T\times I,\bR^N),\qquad 
 (\bar \Lambda\varphi)(t,\cdot)=\Lambda\big(\varphi(t,\cdot)\big), 
 \]
 continuous between the uniform topologies on $C(T\times L,\bR^N)$ and
 $C^{0\omega}(T\times I,\bR^N)$.
 
Furthermore, any $\varphi\in C(T\times I,\bR^N)$ can be approximated uniformly by functions in
 $C^{0\omega}(T\times I,\bR^N)$. (For this latter, extend $\varphi$ to a compactly supported continuous
 $\tilde\varphi:T\times \bR\to\bR^N$. The approximating functions to $\varphi$ can be obtained by convolutions, for example, with the Poisson kernel:
 \[
 \varphi_\varepsilon(t,x)=\frac{\varepsilon}\pi\int_{\bR}\dfrac {\tilde\varphi(t,y)\,dy}{|x-y|^{2}+\varepsilon^{2}},
 \qquad (t,x)\in T\times I,\quad\varepsilon>0.
 \]
 Then $\varphi_\varepsilon\in C^{0\omega}(T\times I,\bR^N)$ tend uniformly to $\varphi$  as
 $\varepsilon\to 0$.)

 To construct $g$ of the lemma, first uniformly approximate $f$  by 
 $\phi\in C^{0\omega}(T\times I,\bR^N)$. Next let 
 $\psi=\phi+\bar \Lambda\big((f-\phi)|T\times L\big)\in C^{0\omega}(T\times I,\bR^N)$, so that 
 $\psi|T\times L=f|T\times L$. 
 If the approximation by $\phi$ was good enough, then $\phi$ and $\psi$ map $T\times I$ into the retract
 neighborhood $R$, and
 $g=r\circ \psi$ is the approximation of $f=r\circ f$ sought.
 \end{proof} 
    
 We return to the set up of  Lemma 8.1. Let
 $I\subset\bR$ be a compact interval and $q_0,\dots,q_J\in I$ distinct points.

  \begin{lem}
 Given $\fx\in C(S,X)$ and $\varepsilon>0$, there is a $\delta>0$ with the
 following property. 
 Suppose $S=\coprod_0^J S_j$ is a Borel partition, $S_0$ is closed, $a<b$ are real numbers, and
$[a,b]\ni t\mapsto\fy_t\in B(S,X)$ is a continuous path. Suppose furthermore that $\fy_t=\fx$ on $S_0$, 
and each $\fy_t$ is
constant on $S_j$, $j=1,\dots,J$. If $d_S(\fx,\fy_a)<\delta$, then  there is a 
$\psi\in C^{0\omega}\big(([a,b]\times S)\times I, X\big)$ such that
\newline\hspace*{1em} (a) $\psi(t,s,q_0)=\fx(s)$ if $t\in[a,b]$, $s\in S$;
 \newline\hspace*{1em} (b) $\psi(t,s,q_j)=\fy_t(s)$ if $t\in[a,b]$, $s\in S_j$, $j=1,\dots,J$;
  \newline\hspace*{1em} (c) $d\big(\fx(s),\psi(a,s,q)\big)< \varepsilon$ if $s\in S$, $q\in I$.
 \end{lem}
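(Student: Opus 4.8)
The plan is to build $\psi$ in two stages: first a continuous model, then a partly analytic approximation. For the first stage, fix the diffeomorphism $F:D\to TX$ of Lemma 2.1 and the associated $\exp:TX\to X$. Using $F$, the path $t\mapsto\fy_t$ near $\fx$ translates into a continuous path $t\mapsto\xi_t=F\circ(\fx\times\fy_t)$ of bounded Borel sections of $\fx^*TX$, vanishing on $S_0$ and locally constant on each $S_j$; more precisely $\xi_t(s)$ depends only on $j$ when $s\in S_j$. Choose a continuous $\chi:I\to[0,1]$ with $\chi(q_0)=0$ and $\chi(q_j)=1$ for $j=1,\dots,J$ (a single such function suffices since the $q_j$ are distinct), and set, for $s\in S_j$,
\[
\psi_0(t,s,q)=\exp\!\big(\chi(q)\,\xi_t(s)\big).
\]
If $\delta$ is small enough that $d_S(\fx,\fy_a)<\delta$ forces $\xi_a$, and hence every $\xi_t$ along the path, to take values in the domain where $\exp$ is defined and where $d(\fx(s),\exp(v))<\varepsilon$ for $|v|\le|\xi_t(s)|$, then $\psi_0$ is a continuous map $[a,b]\times S\times I\to X$ satisfying (a), (b), (c). (Here one uses that the path is continuous and $[a,b]$ compact, so the sections $\xi_t$ stay in a fixed compact neighborhood of the zero section; shrinking $\delta$ controls the $t=a$ estimate and, combined with continuity in $t$, the whole path.)

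For the second stage I would apply Lemma 8.4 with $Y=X$, $T=[a,b]\times S$, and $L=\{q_0,\dots,q_J\}\subset I$: the map $\psi_0\in C([a,b]\times S\times I,X)$ can be approximated uniformly by $\psi\in C^{0\omega}\big(([a,b]\times S)\times I,X\big)$ with $\psi=\psi_0$ on $([a,b]\times S)\times L$. The agreement on $L$ gives (a) and (b) exactly; and if the approximation is close enough (again shrinking $\delta$ is not needed here, only taking the approximation fine), (c) is inherited from the corresponding inequality for $\psi_0$ with a little room to spare, say starting from $d(\fx(s),\psi_0(a,s,q))<\varepsilon/2$. Thus $\psi$ has all the required properties.

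The main obstacle is making sure the partly analytic structure survives: $\psi_0$ as written is only continuous in $q$, not analytic, so (a) and (b) must be arranged to hold on the finite set $L$ exactly while the interpolation/approximation machinery of Lemma 8.4 produces the analyticity in $q\in I$. This is precisely what the ``$f=g$ on $T\times L$'' clause in Lemma 8.3, carried into Lemma 8.4, is designed for, so the argument goes through cleanly provided one is careful that $\psi_0$ depends continuously on $t$ uniformly in $s$ (which follows from continuity of $t\mapsto\fy_t$ in the metric $d_S$) and that the value at $q_0$ is genuinely $\fx(s)$, independent of $t$ — built into the choice $\chi(q_0)=0$. A secondary point is that $\psi_0$ is defined piecewise over the partition $S=\coprod_0^J S_j$; since on each piece it is given by a single formula in $(t,s,q)$ and these formulas agree with $\fx(s)$ at $q=q_0$, continuity across the (Borel, not necessarily open) pieces is automatic once one notes that $\xi_t$ is a bounded Borel section and $\exp$, $\chi$ are continuous — Borel measurability in $s$ is all that is required of the composite, not continuity in $s$.
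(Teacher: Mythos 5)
Your approach has a genuine gap that breaks the argument. The map you define,
\[
\psi_0(t,s,q)=\exp\!\big(\chi(q)\,\xi_t(s)\big), \qquad \xi_t=F\circ(\fx\times\fy_t),
\]
is \emph{not} continuous on $[a,b]\times S\times I$, yet you feed it into Lemma 8.3, which requires $f\in C(T\times I,Y)$. For $j\ge 1$, the section $\xi_t(s)=F\big(\fx(s),\fy_t(S_j)\big)$ jumps as $s$ crosses from one Borel piece $S_j$ to another (the $S_j$ are only Borel, not open or closed for $j\ge 1$); consequently $\psi_0(t,s,q)$ is discontinuous in $s$ for every $q$ with $\chi(q)\neq 0$, i.e.\ for every $q\neq q_0$. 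Your final paragraph first claims continuity across the pieces is "automatic," then retreats to "Borel measurability in $s$ is all that is required" — but the latter is false: by the definition of $C^{0\omega}$ in Section 8, a partly analytic map extends to a \emph{continuous} $\tilde g$ on a neighborhood, and Lemma 8.3 takes a continuous $f$ as input. So there is no way to run your second stage as written.

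The missing idea, which the paper uses, is to avoid any global formula that interpolates in $q$ while staying pointwise at the Borel partition. Instead the paper prescribes the map only on the \emph{disjoint closed} sets $[0,b]\times S\times\{q_0\}$ and $[0,b]\times\bar S_j\times\{q_j\}$ ($j\ge 1$), where the required values ($\fx(s)$, respectively the constants $\fy_t(S_j)$) are continuous; since the $q_j$ are distinct, these slices do not meet and the piecewise definition is honestly continuous. One then extends continuously to $\{0\}\times S\times I$ (to control condition (c)), glues with a cut-off in $t$, and only \emph{then} invokes Lemma 8.3 to get a $C^{0\omega}$ approximation agreeing on $L=\{q_0,\dots,q_J\}$. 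Your choice of $\chi$ essentially tries to do the interpolation by hand before the approximation step, which is exactly where the discontinuity enters. To repair your argument you would have to abandon the formula $\exp(\chi(q)\xi_t(s))$ and instead define the target values only on the slices $q=q_j$ (using the closures $\bar S_j$), letting Lemma 8.3's interpolation supply the behavior in between.
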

 \begin{proof}
 Embed $X$ as a closed real
 analytic submanifold of some $\bR^N$, with $r:R\to X$ an analytic retraction of a neighborhood 
 $R\subset\bR^N$. We can assume that the metric $d$ of $X$ is the restriction of the Euclidean metric; and
 also that $a=0$ and $\varepsilon<1$. Choose $\delta<\varepsilon/2$ so that the $\delta$--neighborhood of 
 \[
 \{x\in X: |x|\le1+\sup_S|\fx|\}\subset \bR^N
 \]
 is contained in the retract neighborhood $R$; and over this 
 $\delta$--neighborhood the retraction satisfies $|r-\id|<\varepsilon/2$. Given $\fy_t$, we will construct $\psi$
 in several steps.
 Consider 
\[ 
C=\big( S\times\{q_0\}\big)\cup\bigcup_{j=1}^J \big(\bar S_j\times\{q_j\}\big),\qquad
C_1=  [0,b]\times C,\qquad
C_2=\{0\}\times S\times I;
\]
$C_1,C_2$ are closed subsets of $[0,b]\times S\times I$. Define $\psi_1:C_1\to X$ by
\begin{equation*}
\psi_1(t,s,q_j)=\begin{cases}\fx(s) &\text{if }j=0,\quad s\in S\\
\fy_t(S_j) &\text{if }j\ge 1,\quad s\in\bar S_j.\end{cases}
\end{equation*}
In particular
\begin{equation} 
d\big(\fx(s),\psi_1(0,s,q)\big)\le d_S(\fx,\fy_0)<\delta\qquad\text{if}\quad (0,s,q)\in C_1\cap C_2.
\end{equation}

 Let $B_s\subset R$ denote the closed ball of radius $\delta$ about $\fx(s)$. Extend 
 $\psi_1|C_1\cap C_2$ to a continuous function $\psi_2:C_2\to\bR^N$, then for $(s,q)\in S\times I$ 
 define $\psi_3(0,s,q)\in B_s$ as the closest point to $\psi_2(0,s,q)$. Thus $\psi_3:C_2\to R$ is continuous
 and by (8.1) $\psi_1=\psi_2=\psi_3=r\circ\psi_3$ on $C_1\cap C_2$. Hence
 \begin{equation*}
 \psi_4=\begin{cases}\psi_1 &\text{on } C_1 \\ r\circ\psi_3 &\text{on } C_2\end{cases}
 \end{equation*}
 defines a continuous function $C_1\cup C_2\to X$, which extends to a continuous $\psi_5:G\to X$, with some neighborhood $G$ of $C_1\cup C_2$. Choose a neighborhood $G_1\subset S\times I$ of $C$ such that
 $[0,b]\times G_1\subset G$, and let a continuous  $\chi: S\times I\to[0,1]$ be supported in $G_1$,
 equal to $1$ on $C_1\cup C_2$. Then 
 \(\psi_6(t,s,q)=\psi_5\big(t\chi(s,q),s,q\big)\) defines a continuous function $[0,b]\times S\times I\to X$, that agrees
 with $\psi_1$ on $C_1$ and with $r\circ\psi_3$ on $C_2$. Requirements (a,b) of the lemma hold
 for $\psi_6$ by the definition of $\psi_1$, and (c) follows because $(0,s,q)\in C_2$
 when $s\in S$, $q\in I$, and so
 \begin{multline*}
 d\big(\fx(s),\psi_6(0,s,q)\big)=d\big(\fx(s),r(\psi_3(0,s,q))\big) \\
\le d\big(\fx(s),\psi_3(0,s,q)\big)+d\big(\psi_3(0,s,q),r(\psi_3(0,s,q))\big)<\delta+\varepsilon/2<\varepsilon.
 \end{multline*}
 A sufficiently good uniform approximation $\psi\in C^{0\omega}\big(([a,b]\times S)\times I,X\big)$ of $\psi_6$
 such that $\psi(t,s,q_j)=\psi_6(t,s,q_j)$ for all $t,s,j$, as guaranteed by Lemma 8.3, will then do. 
 \end{proof}
 
 \begin{proof}[Proof of Lemma 8.1]
 To start, let $D\subset X\times X$ be a neighborhood of the diagonal and $F:D\to TX$ as in Lemma 2.1, and
 $\exp$ as in (2.4). Possibly after shrinking $X$ we can arrange that $F$ and therefore $\exp$ are analytic. 
 
 Consider the holomorphic vector bundle $\Hm\to X\times X$ whose fibers are 
 $\Hm_{x,y}=\Hom(T_xX,T_yX)$, and the open subset $\Is\subset\Hm $ consisting of isomorphisms.
 The fiber bundle $p:\Is\to X\times X$ has a canonical analytic section, $A$, over $D$: this is so, because
 for fixed $x$ the tangent spaces to $T_xX$  are canonically isomorphic, and the
 differential of $F(x,\cdot)$ sets up isomorphisms between $T_xX$, resp. $T_yX$ on the one hand, and the
 tangent spaces to $T_xX$ at $F(x,x),F(x,y)$ on the other. $A(x,y)\in\Hom_{x,y}$ 
 is then the composition of these isomorphisms or their inverses.
 
 Let $I=[-1,1]$ and $q_0=0,q_1,\dots, q_J\in I$ distinct points. We will show that the $\delta$ that
 Lemma 8.4. provides works for Lemma 8.1 as well. Given $a,b,\fy_t$ as in Lemma 8.1,
let $\psi\in C^{0\omega}\big(([a,b]\times S)\times I,X\big)$ be the map that Lemma 8.4 constructs. With 
$\psi_0=\psi(a,\cdot,0):S\to X$ consider the sum of $\psi_0^*TX\to S$ and the trivial line bundle,
\[
\pi:V=(\psi_0^*TX)\times\bC\to S,
\]
 into which $(\psi_0^*TX)\times I$ is embedded. 

Let furthermore $\tilde\psi(t,s,q)=\psi_0(s)=\psi(a,s,0)$. The vector bundle $\tilde\psi^*TX$
is isomorphic to $\psi^*TX$, because $\psi,\tilde\psi:[a,b]\times S\times I\to X$ are homotopic. 
We use this isomorphism to construct a $C^{0\omega}$ map
\begin{equation} 
\theta:([a,b]\times S)\times I\to \Is, \qquad p\big(\theta(t,s,q)\big)=\big(\psi_0(s),\psi(t,s,q)\big),
\end{equation}
as follows. An isomorphism of the bundles translates to a continuous map $\theta_0$ as in (8.2), that, 
by Lemma 8.3,
can be approximated by $\theta_1\in C^{0\omega}\big(([a,b]\times S)\times I, \Is\big)$. 
Let $p\circ\theta_1=\alpha\times\beta$, with $\alpha,\beta:[a,b]\times S\times I\to X$ close to $\tilde\psi$, 
resp. $\psi$.  Then
\[
\theta=\big(A\circ(\beta\times\psi)\big)\,\theta_1\,\big(A\circ(\tilde\psi\times\alpha)\big)\in
C^{0\omega}\big(([a,b]\times S)\times I,\Is\big)
\]
satisfies (8.2).  (Above the values of $\theta$ are products of three linear transformations.) 

With $\varpi:\psi_0^*TX\to S$ the bundle projection and $\Xi\subset\psi_0^*TX$ a suitable neighborhood 
of the zero section,
\begin{equation} 
\phi:([a,b]\times \Xi)\times I\ni(t,\xi,q)\mapsto \big(\exp(\theta(t,\varpi\xi,q)\xi),q\big)\in X\times \bC
\end{equation}
defines a $C^{0\omega}$ map that is holomorphic along the fibers of $\varpi$. We have
\begin{equation} 
\phi(t,\xi_0,q)=\big(\psi(t,s,q),q\big)\qquad\text{when $\xi_0\in(\psi_0^*TX)_s$ is the zero vector,}
\end{equation}
since then $\theta(t,\varpi\xi_0,q)\xi_0\in T_{\psi(t,s,q)}X$ is the zero vector. 

Locally in the base, 
the bundle $\psi_0^*TX$ is a direct product, and $\Xi$ contains a neighborhood of the zero section
that is a product of a piece of $S$ with a complex manifold. Lemma 8.2 therefore gives, possibly after 
shrinking $\Xi$, an extension of $\phi$ to a neighborhood 
$O\subset[a,b]\times \Xi\times\bC\subset[a,b]\times V$ of $[a,b]\times \Xi\times I$. The extension
\[
\Phi=\Phi_1\times\Phi_2:O\to X\times\bC
\]
is continuous, and holomorphic along the fibers $O_{t,s}=O\cap(\{t\}\times V_s)$. By (8.3) 
$\Phi_2(t,\xi,q)= q$. We embed $S$ into
$\psi_0^*TX$ as the zero section, and claim that---possibly after shrinking $O$ to a smaller neighborhood of
$[a,b]\times S\times I\subset[a,b]\times V$---the restrictions $\Phi|O_{t,s}$ map biholomorphically on open 
subsets of $X\times\bC$.

Indeed, fix a zero vector $\xi_0\in T_{\psi_0(s)}X$ and $q\in I$. According to the splitting 
$V=(\psi_0^*TX)\times \bC$, the tangent space $T_{(t,\xi_0,q)}O_{t,s}$ also splits
\begin{equation} 
T_{(t,\xi_0,q)}O_{t,s}\approx T_{s}\psi_0^*TX\oplus T_{q}\bC.
\end{equation}
In the splitting (8.5) the differential at $(t,\xi_0,q)$ of $\Phi_1|O_{t,s}$ is $\exp_*\theta(t,s,q)\oplus 0$,
cf. (8.3), (8.4),
and of $\Phi_2|O_{t,s}$ it is $0\oplus\id_{T_q\bC}$. Since $\exp$ is biholomorphic on any $T_xX$,
it follows that ${(\Phi|O_{t,s})}_*$ is invertible at all points of $[a,b]\times S\times I$, hence also in 
a neighborhood in $[a,b]\times V$. As  $\Phi$ is injective on $\{(t,s)\}\times I$, a simple indirect 
argument shows that if 
$$
O\subset [a,b]\times V=[a,b]\times (\psi_0^*TX)\times \bC
$$ 
is sufficiently shrunk, all $\Phi|O_{t,s}$ will be biholomorphic.

We can take $O$ of form $[a,b]\times O'\times \Omega$, with $O'\subset \psi_0^*TX$ a neighborhood
of the zero section, and $\Omega\subset\bC$ a convex neighborhood of $I$. Let 
$\Delta\subset\bC$ be the unit disc
and $f:\Delta\to \Omega$ biholomorphic, that maps $0$ to $0$. Choose a norm 
$|\,\,|_1$ on $\psi_0^*TX$ whose unit ball bundle is contained in $O'$, and define a norm $|\,\,|$ on $V$ by
\[
|(\xi,\lambda)|=\max\big(|\xi|_1,|\lambda|\big),\qquad \xi\in \psi_0^*TX,\quad\lambda\in\bC,
\]
whose unit ball bundle $U$ is contained in $O'\times \Delta$. The formula
\begin{equation} 
\Psi(t,\xi,\lambda)=\Phi\big(t,\xi,f(\lambda)\big),\qquad 
t\in[a,b],\quad (\xi,\lambda)\in U,
\end{equation}
then defines a continuous $\Psi:[a,b]\times U\to X\times\bC$, whose restrictions to $\{t\}\times U_s$ are
biholomorphisms to open subsets of $X\times\bC$, as (a) claims. 

On $[a,b]\times S\times I$, by (8.4), pr$_X\circ\phi$ and pr$_X\circ\Phi$ agree with $\psi$. Therefore 
Lemma 8.4c implies that, if $O$ above is taken a sufficiently small neighborhood of  $[a,b]\times S\times I$,
then $d\big(\fx(s),\text{pr}_X(\Phi(a,v))\big)<\var$ for $v$ in the fiber $V_s\cap O$. Hence 
$d\big(\fx(s),\text{pr}_X(\Psi(a,v))\big)<\var$ when $v\in U_s$: (d) is also satisfied.

Finally, by (8.4) and by Lemma 8.4a,b (recall that $S\subset\psi_0^*TX$ and $S\times\{0\}\subset V$ are
the zero sections)
\begin{equation*}
\Phi(t,s,q_j)=\phi(t,s,q_j)=\big(\psi(t,s,q_j),q_j\big)=
\begin{cases}\big(\fx(s),0\big) &\text{ if } s\in S,\,j=0\\\big(\fy_t(s),q_j\big) &\text{ if } s\in S_j,\,j=1,\dots,J.
\end{cases}
\end{equation*}
Therefore if we define $\fv(s)=f^{-1}(q_j)$ when $s\in S_j$, $j=0,\dots J$, $\Psi$ will satisfy the remaining 
requirements (b,c) as well.
\end{proof}

\section{The extension theorem in mapping spaces} 

In this section we prove Theorem 7.5. Since the spaces $C(S,X,A,\fx_0)\subset B(S,X,A,\fx_0)$ do not change if
$\fx_0$ is replaced by any other $\fx\in C(S,X,A,\fx_0)$, we will take $\fx=\fx_0$. Recall the notion of basic 
neighborhoods $D_A(\fy)\subset \fB$, defined by a neighborhood $D\subset X\times X$ of the diagonal, see 
(5.1).  If $D$ is as in Lemma 2.1, then $D_A(\fy)$ 
is contractible. 

Given the $E|\fC$ valued germ $\bf f$ of Theorem 7.5, let $C\in\bR$ be such that any analytic continuation 
${\bf h}_t$  of $\bf f$ along any path $\fx_t$ in $\fC$ satisfies $p\big({\bf h}_t(\fx_t)\big)  \le C$.
Lemmas 9.1, 9.3 below continue ${\bf f}\abt$ along special
paths $\fy_t$ in $\fB$.
Fix $D$ above so that  $\bf f$ is the germ of a 
holomorphic section $f$ of $E|\fC\cap D_A(\fx)$ and $f$ has an Aron--Berner--type extension $f\abt$ to a section of
$E''|D_A(\fx)$. Then ${\bf f}\abt$ is the germ of $f\abt$.
\begin{lem} 
Consider a path $[0,1]\ni t\mapsto \fy_t\in \fB$ starting at $\fx$. Assume $a\in(0,1)$ 
is such that $\fy_t\in D_A(\fx)$ when $t\le a$. 
 Assume also that there are a normed topological vector bundle $\pi:(V,|\,\,|)\to S$
 with unit ball bundle $U\to S$, a $\fv\in B_A(V)$, $\sup_S|\fv|<1$, and a continuous
$\Psi:[a,1]\times U\to X$ such that for every  $t\in[a,1]$ and $s\in S$ 
\newline\hspace*{1em} (a) $\Psi$ maps the fibers $\{t\}\times U_{s}$ biholomorphically  on open subsets
of  $X$;
\newline\hspace*{1em} (b) the zero vector $0_{s}\in V_{s}$ satisfies $\Psi(t,0_{s})=\fx(s)$;
\newline\hspace*{1em} (c)  $\Psi(t,\fv)=\fy_t$;
\newline\hspace*{1em} (d) $\Psi(a,v)\in D^{\fx(s)}$ for $v\in U_{s}$, cf. (2.3).
\newline Then ${\bf f}\abt$ has an analytic continuation ${\bf g}_t$ along $\fy_t$, such that 
$p\big({\bf g}_t(\fy_t)\big)\le C$.
\end{lem}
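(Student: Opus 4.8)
The plan is to use the family of maps built from $\Psi$ to pull the continuation of ${\bf f}\abt$ back to an open unit ball in a Banach space, where Aron--Berner extension (sections 3--4) does the job, and then push the result forward to $\fB$. Throughout write $\fx=\fx_0$. First observe that, with $p=|\,\,|$ and $U=\{v\in V:|v|<1\}$ the unit ball bundle, the set $\hat U\subset B(V)$ of section 1 equals the open unit ball $\{\xi\in B(V):\sup_S|\xi|<1\}$; indeed $U_{p,\var}=\{u\in V:|u|\le 1-\var\}$ for $0<\var<1$. Hence $\hat U\cap C_A(V)$ and $\hat U\cap B_A(V)$ are the open unit balls of $C_A(V)$ and of $B_A(V)$, the latter sitting isometrically in $C_A(V)''$ by Lemma 4.2; both balls are convex, hence simply connected. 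For each $t\in[a,1]$ define
\[
\Psi_t^\circ:\hat U\cap B_A(V)\longrightarrow\fB,\qquad\xi\longmapsto\Psi(t,\cdot)\circ\xi .
\]
Read locally in $S$ through the charts of section 2, $\Psi_t^\circ$ is holomorphic by Theorem 4.3; by (b),(c) it sends $0$ to $\fx$ and $\fv$ to $\fy_t$; being continuous with $\Psi_t^\circ(0)=\fx$ it carries the connected set $\hat U\cap C_A(V)$ into the component $\fC$ of $\fx$; its differential at any $\xi$ is fibrewise the differential of the biholomorphism $\Psi(t,\cdot)|_{U_s}$ at $\xi(s)$, an isomorphism by (a), and these are bounded above and below uniformly in $(s,t)$ on the relevant compact subsets of $U$, so $\Psi_t^\circ$ is a local biholomorphism, in particular mapping a neighbourhood of $\fv$ in $B_A(V)$ biholomorphically onto a neighbourhood of $\fy_t$ in $\fB$. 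Finally $\Psi_a^\circ(\hat U\cap B_A(V))\subset D_A(\fx)$ by (d). For $t\in[0,a]$ we take ${\bf g}_t=$ the germ of $f\abt$ at $\fy_t$: since $D_A(\fx)$ is contractible and $f\abt$ is a section of $E''|D_A(\fx)$ restricting to ${\bf f}\abt$ at $\fy_0=\fx$, this is the analytic continuation of ${\bf f}\abt$ along $\fy|_{[0,a]}$, with $p\big({\bf g}_t(\fy_t)\big)=p\big(f\abt(\fy_t)\big)\le\sup_{D_A(\fx)}p(f\abt)=\sup_{\fC\cap D_A(\fx)}p(f)\le C$ by Aron--Berner norm preservation ((3.5), sections 4 and 6) and because the germs of $f$ on the connected set $\fC\cap D_A(\fx)$ are analytic continuations of $\bf f$.

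Now fix $t\in[a,1]$. Pull $\bf f$, together with its analytic continuations, back along $\Psi_t^\circ|_{\hat U\cap C_A(V)}$: the flat norm $p$ pulls back to a flat norm, $\hat U\cap C_A(V)$ is simply connected, and $\Psi_t^\circ$ is a local biholomorphism, so the bounded continuations of $\bf f$ assemble into a single-valued holomorphic section $\hat f_t$ of $(\Psi_t^\circ)^*E$ over $\hat U\cap C_A(V)$ with $\sup p(\hat f_t)\le C$. Since $\hat U\cap B_A(V)$ is convex, Theorem 6.3 trivialises $(\Psi_t^\circ)^*E$ over it with typical fibre $(E_\fx,p|E_\fx)$; in such a trivialisation $\hat f_t$ is a bounded holomorphic map of the open unit ball $\hat U\cap C_A(V)$ into $E_\fx$, so section 4 (via Theorem 3.2 and Lemma 4.2) provides its Aron--Berner extension $\hat f_t\abt:\hat U\cap B_A(V)\to E_\fx''$, bounded by $C$, which we reinterpret as a section $\hat f_t\abt$ of $(\Psi_t^\circ)^*E''=\big((\Psi_t^\circ)^*E\big)''$ with $\sup p(\hat f_t\abt)\le C$. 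Put ${\bf g}_t=(\Psi_t^\circ)_*\big(\text{germ of }\hat f_t\abt\text{ at }\fv\big)$, an $E''$ valued germ at $\fy_t=\Psi_t^\circ(\fv)$; then $p\big({\bf g}_t(\fy_t)\big)=p\big(\hat f_t\abt(\fv)\big)\le C$. At $t=a$, applying the naturality Corollary 4.6 through the chart $\psi_\fx$ on a small ball about $\fv$ (with $H^\circ=\psi_\fx\circ\Psi_a^\circ$, legitimate by (d), and using translation-compatibility of the Aron--Berner extension) shows $\hat f_a\abt=(\Psi_a^\circ)^*(f\abt)$ near $\fv$, so this ${\bf g}_a$ is exactly the germ of $f\abt$ at $\fy_a$; thus the two recipes for ${\bf g}_t$ agree at $t=a$.

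It remains to check that $t\mapsto{\bf g}_t$ is continuous on $[a,1]$ (continuity on $[0,a]$, and at $a$, being immediate). By uniform continuity of $\Psi$ the maps $\Psi_t^\circ$ vary continuously in $t$; as $\hat f_t(\xi)$ is the endpoint value of the continuation of $\bf f$ along $u\mapsto\Psi_t^\circ(u\xi)$, continuity of analytic continuation gives $\hat f_t\to\hat f_{t_0}$ pointwise, hence---being uniformly bounded and holomorphic---locally uniformly, on $\hat U\cap C_A(V)$. By (3.5) and linearity of the Aron--Berner extension, $\hat f_t\abt\to\hat f_{t_0}\abt$ locally uniformly on $\hat U\cap B_A(V)$, in particular near $\fv$, so the germs at $\fv$ vary continuously; pushing them forward by the continuously varying local biholomorphisms $\Psi_t^\circ$ near $\fv$ (as in Lemma 7.2) shows $t\mapsto{\bf g}_t$ is continuous. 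Splicing the two families, ${\bf g}_t$ is the analytic continuation of ${\bf f}\abt$ along $\fy$ with $p\big({\bf g}_t(\fy_t)\big)\le C$ throughout. The steps demanding the most care are the bundle bookkeeping---choosing the trivialisations of $(\Psi_t^\circ)^*E$ over $\hat U\cap B_A(V)$, and of $E''$ near $\fy_t$, to depend continuously on $t$ so that $\hat f_t\abt$ and the convergence $\hat f_t\abt\to\hat f_{t_0}\abt$ are genuinely meaningful---and the identification at $t=a$ via Corollary 4.6, which is what singles out the canonical extension $f\abt$, a bounded holomorphic extension to $\hat U\cap B_A(V)$ being otherwise far from unique.
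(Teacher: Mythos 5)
Your overall plan is the paper's: pull the data back along $\Psi_t^\circ$ to the unit ball of $B_A(V)$, apply the section~4 Aron--Berner extension there (bound preserved by (3.5)), and use the naturality Corollary~4.6 to identify the result with $f\abt$. But two of your steps do not work as written. First, the matching at $t=a$: you invoke Corollary~4.6 ``on a small ball about $\fv$'' together with ``translation-compatibility''. The extension of section~4, and Corollary~4.6 with it, is only defined for balls whose \emph{center lies in} $C_A(V)$; the section $\fv$ is merely bounded Borel (in the application in Lemma~9.3 it is genuinely discontinuous), and translation compatibility only covers translating the center inside the predual space, not by an element of $B_A(V)\setminus C_A(V)$. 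The identity you need --- that the germ of $\hat f_a\abt$ at $\fv$ is the pullback of the germ of $f\abt$ at $\fy_a$ --- is true, but should be obtained by applying Corollary~4.6 at the center $0$ (as the paper does) and then propagating the resulting identity $\hat f_a\abt=f\abt\circ\Psi_a^\circ$ from a neighborhood of $0$ to $\fv$ by the identity theorem on the connected unit ball, hypothesis (d) guaranteeing that $f\abt\circ\Psi_a^\circ$ is defined on the whole ball.

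The more serious gap is the continuity in $t$. Your argument hinges on ``pointwise convergence $\hat f_t\to\hat f_{t_0}$ plus the uniform bound implies locally uniform convergence'', which you then feed into (3.5). For (3.5) to give anything you need uniform convergence on smaller \emph{norm balls}, and in infinite dimensions boundedness plus pointwise convergence does not yield that: the coordinate functionals on the unit ball of $c_0$ are uniformly bounded and tend to $0$ pointwise, but on no smaller ball uniformly. (Uniform convergence on compact sets does follow from equicontinuity, but (3.5) gives no control of the extensions from that.) Moreover, even formulating $\hat f_t\to\hat f_{t_0}$ requires comparing sections of the $t$-dependent bundles $(\Psi_t^\circ)^*E$, i.e.\ choosing trivializations continuously in $t$ --- the ``bundle bookkeeping'' you flag but never carry out --- and Lemma~7.2 applies to a fixed map $P$, not to the varying family $\Psi_t^\circ$. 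The paper's Lemma~9.2 exists precisely to avoid all of this: by a topological degree argument $\Psi_t^\circ$ is globally injective, hence biholomorphic onto an open subset of $\fB$, so $g_t=k\abt\circ\Phi^\circ$ is a single-valued $E''$ valued section on $\Psi_t^\circ(\Gamma)$ whose germ at $\fx$ is ${\bf f}\abt$ for every $t$; continuity of $t\mapsto{\bf g}_t$ then follows purely from the identity theorem applied along a path from $\fx$ to $\fy_t$ lying in $\Psi_\tau^\circ(\Gamma)$ for all nearby $\tau$, with no convergence of functions needed. To salvage your route you would have to either prove this global injectivity, or establish ball-uniform convergence of $\hat f_t$ directly (say, using the bound $C$ and uniform-size simply connected neighborhoods in $\fC$ on which the continuations are single valued, giving an equi-Lipschitz estimate); as written, the continuity step is unjustified.
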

Let $\Gamma\subset B_A(V)$ be the unit ball. 
\begin{lem}
For $t\in[a,1]$ the map
\begin{equation}
\Psi_t^\circ:\Gamma\ni\fu\mapsto\Psi(t,\fu)\in D_A(\fx),\quad\text{ cf. (d) above,}
\end{equation}
is biholomorphic on its image, an open subset of $D_A(\fx)$.
\end{lem}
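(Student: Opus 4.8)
\emph{Proof plan.} Fix $t\in[a,1]$ and abbreviate $\Psi_t=\Psi(t,\cdot):U\to X$; by (a) each $\Psi_t|U_s$ is biholomorphic onto an open subset of $X$. The plan is to verify that $\Psi_t^\circ$ is (i)~holomorphic and (ii)~injective, and that (iii)~its derivative $d\Psi_t^\circ(\fu)$ is a topological linear isomorphism at every $\fu\in\Gamma$; the holomorphic inverse function theorem (\cite{M86}) then makes $\Psi_t^\circ$ a local biholomorphism, which together with (ii) gives that it is a biholomorphism of $\Gamma$ onto the open set $\Psi_t^\circ(\Gamma)\subset\fB$. Injectivity is immediate: if $\Psi_t^\circ(\fu_1)=\Psi_t^\circ(\fu_2)$ then $\Psi_t(\fu_1(s))=\Psi_t(\fu_2(s))$ for each $s$, and $\Psi_t|U_s$ is injective by (a), so $\fu_1=\fu_2$. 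Throughout I will use that for $\fu\in\Gamma$ the closure $\overline{\fu(S)}$ is a \emph{compact} subset of $U$: it lies in the closed ball bundle $\{v\in V:|v|\le\sup_S|\fu|\}$, which is compact since $V$ is a topological vector bundle over the compact space $S$, and $\sup_S|\fu|<1$.

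For holomorphy I would argue locally, near a fixed $\fu_0\in\Gamma$. Put $\fy_0=\Psi_t^\circ(\fu_0)$; by (b), $\fy_0|A=\fx|A$, so $\fy_0\in\fB$, and (as in Section 2) $\fB$ carries a chart $\psi_{\fy_0}$ of the form (5.2) at $\fy_0$, whether or not $\fy_0$ is continuous. A uniform-continuity estimate for $\Psi_t$ on a compact neighbourhood of $\overline{\fu_0(S)}$ in $U$ shows that $\Psi_t^\circ$ maps a neighbourhood of $\fu_0$ in $\Gamma$ into $D_A(\fy_0)$, and there $\psi_{\fy_0}\circ\Psi_t^\circ$ is the map $G^\circ$ of Lemma 1.3 associated with $G(v)=F\bigl(\fy_0(\pi v),\Psi_t(v)\bigr)\in T_{\fy_0(\pi v)}X=(\fy_0^*TX)_{\pi v}$, defined on the open set $U'=\{v\in U:(\fy_0(\pi v),\Psi_t(v))\in D\}$. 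This $G$ is Borel, fibrewise holomorphic, and of bounded type --- the sets $U'_{p,\var}$ ($p=|\,\,|$) are relatively compact in $U$, so $G$ is bounded on them --- so Lemma 1.3 gives that $G^\circ$, hence $\Psi_t^\circ=\psi_{\fy_0}^{-1}\circ G^\circ$, is holomorphic near $\fu_0$. As $\fu_0$ was arbitrary, $\Psi_t^\circ$ is holomorphic.

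The hard part is (iii). Fix $\fu\in\Gamma$ and write $\fy_0=\Psi_t^\circ(\fu)$. By the computation of directional derivatives in the proof of Lemma 1.3, $d\Psi_t^\circ(\fu)\xi$ is the section $s\mapsto d_2G\bigl(\fu(s);\xi(s)\bigr)$, where $d_2G(\fu(s);\cdot):V_s\to T_{\fy_0(s)}X$ is the fibrewise differential of the $G$ above. Since $\Psi_t(\fu(s))=\fy_0(s)$, Lemma 2.1(c) identifies the differential of $F^{\fy_0(s)}$ at $\fy_0(s)$ with $\mathrm{id}_{T_{\fy_0(s)}X}$, so $d_2G(\fu(s);\cdot)$ coincides with the differential of $\Psi_t|U_s$ at $\fu(s)$, a linear isomorphism onto $T_{\fy_0(s)}X=(\fy_0^*TX)_s$ by (a). These differentials depend continuously on $v\in U$, so on the compact set $\overline{\fu(S)}$ both they and their inverses have uniformly bounded operator norm; hence $\xi\mapsto\bigl(s\mapsto d_2G(\fu(s);\cdot)^{-1}\xi(s)\bigr)$ is a bounded linear map $B_A(\fy_0^*TX)\to B_A(V)$ --- it preserves measurability, and it preserves vanishing on $A$ because $\fu(s)=0_s$ there --- inverting $d\Psi_t^\circ(\fu)$. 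Thus $d\Psi_t^\circ(\fu):B_A(V)\to B_A(\fy_0^*TX)=T_{\fy_0}\fB$ is a topological linear isomorphism. The only real subtlety is this uniform bound on the inverses, which is exactly where the compactness of $\overline{\fu(S)}$ and the joint continuity of $\Psi$ in $(t,v)$ are used.

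Assembling (i)--(iii) gives that $\Psi_t^\circ$ is a biholomorphism of $\Gamma$ onto an open subset of $\fB$. That this image lies in $D_A(\fx)$ I would read off from (d): at $t=a$ it gives $(\fx(s),\Psi_a(v))\in D$ for all $v\in U_s$, and since $\fx(S)$ and every $\overline{\fu(S)}$ ($\fu\in\Gamma$) are compact, $(\fx\times\Psi_a^\circ(\fu))(S)$ has compact closure in $D$; hence $\Psi_a^\circ(\Gamma)\subset D_A(\fx)$, and --- $D_A(\fx)$ being open --- $\Psi_t^\circ(\Gamma)\subset D_A(\fx)$ for $t$ near $a$ as well, which is the range relevant to Lemma 9.1.
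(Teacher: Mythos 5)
Your proposal is correct in substance, but it takes a genuinely different route to the biholomorphy than the paper does. The paper never invokes an inverse function theorem: it shows the fiberwise map $G:U\ni v\mapsto(\pi v,\Psi(t,v))\in S\times X$ is injective (from (a)) and open (by a topological degree argument), so it has a fiberwise-holomorphic inverse $\Phi$ on an open $O\subset S\times X$, and then it checks that \emph{both} $\Psi_t^\circ$ and the explicit candidate inverse $\Phi^\circ:\fz\mapsto\Phi\circ(\id_S\times\fz)$ are holomorphic via Lemma 1.3 in charts (5.2); the explicit $\Phi^\circ$ is then reused in the proof of Lemma 9.1, cf. (9.2). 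You instead prove injectivity and holomorphy (the latter exactly as the paper does, through Lemma 1.3), and then show the Banach differential $d\Psi_t^\circ(\fu)$ is an isomorphism of $B_A(V)$ onto $B_A(\fy_0^*TX)$ — fiberwise it is $d(\Psi_t|U_s)|_{\fu(s)}$ by Lemma 2.1(c), invertible by (a), with uniform bounds on the inverses from continuity of the fiber derivatives and compactness of $\overline{\fu(S)}$ — and quote the holomorphic inverse function theorem in Banach spaces. Your route avoids the degree argument and the separate holomorphy check for the inverse; the paper's route avoids the IFT and the uniform-invertibility estimates and hands you the inverse in closed form. Both are legitimate.

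Two small points to tighten. First, your justification that $G(v)=F\bigl(\fy_0(\pi v),\Psi_t(v)\bigr)$ is of bounded type is not quite right as stated: $U'_{p,\var}$ is indeed relatively compact in $U$ (it sits in $\{p\le 1-\var\}$), but its image under $(\fy_0\circ\pi)\times\Psi_t$ need only lie in $D$, not in a compact subset of $D$, and $F$ is merely continuous on $D$, so boundedness of $G$ there does not follow. Since you only need a germ statement at $\fu_0$, shrink the domain of $G$ to a small tube $\{v\in U: p\bigl(v-\fu_0(\pi v)\bigr)<\delta\}$; then $\bigl(\fy_0(\pi v),\Psi_t(v)\bigr)$ stays in a compact neighborhood of the diagonal over $\overline{\fy_0(S)}$ inside $D$, and $G$ is bounded outright. (The paper is equally terse at the corresponding step.) Second, in the last paragraph, the containment in $D_A(\fx)$ is needed, and used by the paper, only at $t=a$, where your argument via (d) and compactness of $\overline{\fu(S)}$ is fine; for $t>a$ it neither follows from (a)--(d) nor is required — openness in $\fB$ suffices — so the claim that it persists for $t$ near $a$ uniformly over all of $\Gamma$ is both unnecessary and not clearly true, and is best dropped.
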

\begin{proof}
Fix $t\in [a,1]$. First we claim that the map
\[
G: U\ni v\mapsto\big(\pi v,\Psi(t,v)\big)\in S\times X
\]
is injective and open. Injectivity follows from (a) of Lemma 9.1, since $G$ maps 
$U_s$ to $\{s\}\times X$ injectively. 
That $G$ is open is a local property. Accordingly, we can assume that $V\to S$ is trivial, $V=S\times\bC^r$.
 Suppose $\Psi(t,s_0,u_0)=z_0\in X$ with some 
$(s_0,u_0)\in U$, and let $B\subset\bC^r$ be a small compact neighborhood of $u_0$. The topological degree
$\deg\big(\Psi(t,s_0,\cdot),B,z_0\big)=1$. Hence $\deg\big(\Psi(t,s,\cdot),B,z\big)=1$ for $s,z$ close to
$s_0,z_0$; $\Psi(t,s,\cdot)$ attains the value $z$ in $ B$, 
and it follows that $G$ is indeed open. This
implies that $\Psi_t^\circ$ is open. It is also injective.

The inverse $\Phi$ of $G$ is a homeomorphism between an open $O\subset S\times X$ and
$U$, holomorphic on $O\cap\big(\{s\}\times X)$, and satisfies $\Psi\big(t,\Phi(s,z)\big)=z$,
$s\in S$, $z\in D^{\fx(s)}$. The map
\begin{equation}
\Phi^\circ:\Psi^\circ_t(\Gamma)\ni\fz\mapsto\Phi\circ(\id_S\times\fz)\in\Gamma
\end{equation}
is the inverse of $\Psi_t^\circ$, and both $\Phi^\circ,\Psi_t^\circ$ are holomorphic. To check the holomorphy
of $\Psi_t$, say, take a local coordinate $\psi_\fz$ on $\fB$ as in (5.2) with
$F:D\to TX$ of Lemma 2.1,
\[
\psi_\fz:D_A(\fz)\ni\fw\mapsto F\circ(\fz\times\fw)\in B_A\big(\fz^*TX).
\]
The formula $H(v)=F\big(\fz(\pi v),\Psi(t,v)\big)\in T_{\fz(\pi v)}X$ defines a map from a Borel
subset of $V$ to $\fz^*TX$ that satisfies the assumptions of Lemma 1.3. Therefore
\(
\psi_\fz\circ\Psi^\circ_t=F\circ(\fz\times\Psi_t^\circ),
\)
the restriction of $H^\circ$ of that lemma, is holomorphic on the open subset of $\Gamma$ where it is defined.
This means $\Psi_t^\circ$ is indeed
holomorphic. A similar argument proves for $\Phi^\circ$; therefore both are biholomophic.
\end{proof}

\begin{proof}[Proof of Lemma 9.1]
For values $t\le a$ the germs of $f\abt$ at $\fy_t$ supply the continuation ${\bf g}_t$. To continue ${\bf g}_a$ to values $t\ge a$, we will prove that with $\Psi^\circ_t$ of (9.1):
\newline\hspace*{2em} (1) ${\bf f}\abt$ is the germ of a holomorphic section $g_t$ of $E''|\Psi_t^\circ(\Gamma)$,
$\sup p''(g_t)\le C$;
\newline\hspace*{2em} (2) the germ of $g_a$ at $\fy_a$ is ${\bf g}_a$;
\newline\hspace*{2em} (3) the germs ${\bf g}_t$ of $g_t$ at $\fy_t$, $t\in[a,1]$, provide an analytic 
continuation of ${\bf g}_a$.

(1) Fix $t\in[a,1]$. The pullback  $\Psi_t^{\circ*}(E,p)=(\bar E,\bar p)\to\Gamma$ is a holomorphic 
Banach bundle with a flat norm $\bar p$. 
By Theorem 6.3 it is therefore trivial, and its (bounded) holomorphic sections can be viewed as (bounded) 
holomorphic functions with 
values in a fixed Banach space $(\fY,||\,\,||)$. Since the germ ${\bf k}={\bf f}\circ\Psi_t^\circ$ has 
bounded analytic continuation
along any path in $\Gamma\cap\fC$ (supplied by the pullback along  of $\Psi_t^\circ$ of the 
corresponding continuation of $\bf f$), it is the germ of a
bounded holomorphic $k:\Gamma\cap\fC\to\fY$. This $k$ has an Aron--Berner--type 
extension $k\abt$ to $\Gamma$ with $\sup p''(k\abt)\le C$,  constructed in section 4. Corollary 4.6 implies that 
the germ of
$k\abt$ at $0$ is ${\bf f}\abt\circ\Psi_t^\circ$, and this means that 
$g_t=k\abt\circ\Phi^\circ$ will do, cf. (9.2). 

(2) Since $\Psi_a^\circ(\Gamma)\subset D_A(\fx)$ is connected, and $f\abt$, $g_a$ share the same germ at 
$\fx$, namely 
${\bf f}\abt$, on $\Psi^\circ_a(\Gamma)$ we have $f\abt=g_a$. In particular, their germs at 
$\fy_a=\Psi^\circ_a(\fv)$ agree.

(3) Given $t\in[a,1]$, connect $\fx$ with $\fy_t$ by a path in $\Psi_t^\circ(\Gamma)$. By continuity of $\Psi$,
for $\tau\in[a,1]$ close to $t$, this path is contained in $\Psi_\tau^\circ(\Gamma)$. Since the germs of
$g_t,g_\tau$ at $\fx$ agree, $g_t,g_\tau$ agree in a neighborhood of the path, hence in a neighborhood
of $\fy_t$. But this means that the germs
${\bf g}_t$ depend continuously on $t$, and they continue ${\bf g}_a$ analytically. This completes the proof.
\end{proof}
Again we metrize $X$ with a metric $d$, then $d_S$ of (2.2) metrizes $\fB$.

\begin{lem}
There is a $\delta>0$ with the following property. Let
$S=\coprod_{j=0}^J S_j$ be a partition in Borel sets, $S_0=A$. Suppose that $[0,1]\ni t\mapsto\fy_t\in \fB$ is a path
starting at $\fx$, and $a\in(0,1)$ is such that 
\newline\hspace*{1em} (a) $d_S(\fx,\fy_t)<\delta$ when $t\in[0,a]$;
\newline\hspace*{1em} (b) $\fy_t|S_j$ is constant when $t\in[a,1]$, $j=1,2,\dots, J$.
\newline
Then ${\bf f}\abt$ has an analytic continuation ${\bf g}_t$ along $\fy_t$, such that 
$p\big({\bf g}_t(\fy_t)\big)\le C$.
\end{lem}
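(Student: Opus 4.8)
The extra $\bC$--factor in Lemma 8.1 is exactly what allows Lemma 9.3 to be reduced to Lemma 9.1: over the manifold $X\times\bC$ the rank of $V$ equals $\dim_{\bC}(X\times\bC)$, so the ``chart family'' $\Psi$ becomes a family of local biholomorphisms of the mapping space. The plan is therefore to run the argument of Lemma 9.1 with $X$ replaced by $X\times\bC$, and then to push the resulting continuation back down to $\fB$ along the projection $\text{pr}_X\colon X\times\bC\to X$ by means of Lemma 7.3.

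First I would fix $\varepsilon>0$ so small that the $\varepsilon$--neighborhood of each point of $\overline{\fx(S)}$ lies in the corresponding fiber $D^{x}$, and take $\delta$ to be the constant Lemma 8.1 furnishes for $\fx$ and this $\varepsilon$, shrunk further so that $d_S(\fx,\fy)<\delta$ also forces $\fy\in D_A(\fx)$. Given the partition, the path $\fy_t$, and $a$ as in the statement, Lemma 8.1 on $[a,1]$ produces a normed topological vector bundle $(V,|\,\,|)\to S$ with unit ball bundle $U$, a section $\fv\in B_A(V)$, $\sup_S|\fv|<1$, and a continuous $\Psi\colon[a,1]\times U\to X\times\bC$ with properties (a)--(d) there. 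Set $\hat\fB=B(S,X\times\bC,A,(\fx_0,0))$, let $\hat\fC$ be the component of $(\fx_0,0)$ in it, and let $q\colon\hat\fB\to\fB$ be composition with $\text{pr}_X$; by Lemma 2.3 $q$ is holomorphic, and it is open since a small perturbation of $q(\hat\fz)$ lifts to $\hat\fB$ by keeping the $\bC$--component of $\hat\fz$ fixed. Put $\hat E=q^*E$, $\hat p=q^*p$ (a flat norm), $\hat{\bf f}=q^*{\bf f}$. As $q(\hat\fC)\subset\fC$, pulling back along $q$ the analytic continuations of $\bf f$ shows that $\hat{\bf f}$ admits bounded analytic continuation along every path in $\hat\fC$, with the same bound $C$; and $\hat{\bf f}\abt=q^*({\bf f}\abt)$ by the naturality of the Aron--Berner extension (Corollary 4.6, applied to the locally linear $q$).

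Next I would apply the argument of Lemma 9.1, read over $X\times\bC$, to the pair $\hat\fC\subset\hat\fB$, the flat--normed bundle $(\hat E,\hat p)$, the germ $\hat{\bf f}$, and the path $\hat\fy_t$ equal to $\Psi(t,\fv)$ for $t\in[a,1]$ and to $(\fy_t,(t/a)\mathbf q)$ for $t\in[0,a]$, where $\mathbf q\in B(S,\bC)$ is the $t$--independent $\bC$--component of $\Psi(a,\fv)$. One checks that $\hat\fy_0=(\fx_0,0)$, that $t\mapsto\hat\fy_t$ is continuous at $t=a$, and --- for a sufficiently large neighborhood $\hat D$ of the diagonal in $(X\times\bC)^2$ of the product form $D\times\{(\lambda,\mu):|\mu-\lambda|<R\}$, so that $\hat F=F\times\big((\lambda,\mu)\mapsto\mu-\lambda\big)$ is admissible for Lemma 2.1 --- that $\hat\fy_t\in\hat D_A\big((\fx_0,0)\big)$ for $t\le a$ and that $\Psi(a,U_s)\subset\hat D^{(\fx(s),0)}$ for every $s$, the latter using Lemma 8.1(d) for the $X$--component and the fact (visible from its proof) that the $\bC$--component of $\Psi(a,\cdot)$ stays in a fixed bounded set. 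Thus conditions (a)--(d) of Lemma 9.1 hold for $\Psi$, and its proof delivers an analytic continuation $\hat{\bf g}_t$ of $\hat{\bf f}\abt$ along $\hat\fy_t$ with $\hat p''\big(\hat{\bf g}_t(\hat\fy_t)\big)\le C$.

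Finally I would descend. Since $\hat{\bf g}_0=\hat{\bf f}\abt=q^*({\bf f}\abt)$ is a $q$--pullback, and since being a $q$--pullback is an all--or--nothing property on any connected coordinate ball of $\hat\fB$ (a holomorphic section of $\hat E''$ one of whose germs is constant along the fibers of $q$ is so on the whole ball, by the identity theorem), openness of $q$ and a clopen--subset argument on $[0,1]$ show that each $\hat{\bf g}_t$ equals $q^*({\bf g}_t)$ for a unique holomorphic germ ${\bf g}_t$ of a section of $E''$ at $q(\hat\fy_t)=\fy_t$, with ${\bf g}_0={\bf f}\abt$. As the $q^*({\bf g}_t)$ depend continuously on $t$, Lemma 7.3 gives that the ${\bf g}_t$ do, so they provide the required analytic continuation of ${\bf f}\abt$ along $\fy_t$, with $p''\big({\bf g}_t(\fy_t)\big)=\hat p''\big(\hat{\bf g}_t(\hat\fy_t)\big)\le C$. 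I expect the main obstacle to be this lifting device itself --- recognizing $X\times\bC$ as the arena in which Lemma 9.1's construction applies to the $\Psi$ of Lemma 8.1 --- together with the subsidiary bookkeeping: the openness of $q$, the enlargement of $\hat D$ needed for hypothesis (d), and the verification that the descended germs ${\bf g}_t$ are well defined.
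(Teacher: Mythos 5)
Your proposal is correct and follows essentially the same approach as the paper: lift the problem to the augmented mapping space $\fB^+ = B(S,X\times\bC,A,(\fx_0,0))$ via the projection $P = q$, invoke Lemma 9.1 there using the $\Psi$ from Lemma 8.1, and then descend the resulting continuation along $P$ back to $\fB$ with the aid of Lemma 7.2 (which you call 7.3). The only substantive difference is in the descent step, where you use an identity-theorem/clopen argument to show each $\hat{\bf g}_t$ is a $q$-pullback, whereas the paper shows directly that the vertical derivatives $\xi{\bf k}_t$ vanish because they continue the vanishing $\xi{\bf k}_0$ — both are valid expressions of the same underlying fact.
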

\begin{proof} 
With $X^+=X\times \bC$ and pr$_X:X^+=X\times\bC\to X$ the projection, let 
 $D^+=(\text{pr}_X\times\text{pr}_X)^{-1}D\subset X^+\times X^+$, a neighborhood of the diagonal, 
 $\fx^+=\fx\times0\in C(S,X^+)$,
\[
\fB^+=\{\fz\in B(S,X^+):\fz|A=\fx^+|A\}, 
\]
and $\fC^+$ the component of $\fx^+$ in $\fB^+\cap C(S,X^+)$.  Choose 
$\var>0$ so that $D(\fx)$ contains the $d_S$ metric ball of radius $\var$  about $\fx$.
Given $\fx$ and $\var$, the $\delta>0$ provided by Lemma 8.1 will do.

Indeed, if (a), (b) of our lemma
hold, choose a 
normed vector bundle $(V,|\,\,|)\to S$ with unit ball bundle $U\subset V$, $\fv$ in the unit ball 
$\Gamma\subset B_A(V)$, 
and a map $\Psi:[a,1]\times U\to X^+$  as guaranteed by Lemma 8.1. 
Consider the projection
 $P:\fB^+\to\fB$ given by $P(\fz)=\text{pr}_X\circ\fz$; it is holomorphic and maps $\fC^+$ to 
$\fC$. The pullback $f\circ P$ is a holomorphic section of $P^*E|\big(\fC^+\cap D^+(\fx^+)\big)$, whose
germ at $\fx^+$ we denote $\bf h$. Now $\bf h$ analytically  continues along any path $r$ in
$\fC^+$, the continuations provided by the lifts of the continuations along $P\circ r$; these continuations are bounded,
with the same bound $C$ as for $\bf f$. 
Let us set $\fz_t=\Psi(t,\fv)\in \fB^+$, $t\in[0,1]$. We are in the situation of Lemma 9.1, with $X$ replaced by $X^+$,
$\fB$ by $\fB^+$, $\fy_t$ by $\fz_t$. By the lemma ${\bf h}\abt$ has an analytic continuation 
${\bf k}_t$ along $\fz_t$. 
It remains to show that the $P^*E''$ valued germs ${\bf k}_t$ are pullbacks of germs ${\bf g}_t$ that form 
the analytic continuation of ${\bf f}\abt$ along $\fy_t$.

Let us call a holomorphic vector field $\xi$ on $\fB^+$ vertical if it is tangential to the fibers $P^{-1}(\fy)$,
$\fy\in\fB$. Consider a Banach space $\fY$ and a $\fY$ valued holomorphic germ $\bf k$ at some 
$\fz\in\fB^+$.
It is of form ${\bf k}={\bf g}\circ P$ with a $\fY$ valued holomorphic germ $\bf g$ at $P(\fz)$ if and only if
the derivatives $\xi{\bf k}=0$ for all vertical vector fields $\xi$. Similarly, if $\bf k$ is a $P^*E''$ valued germ at
$\fz\in\fB^+$, we can compute its vertical derivatives $\xi\bf k$, since the restriction of (a representative
of) $\bf k$ to a fiber $P^{-1}(\fu)$ takes values in the fixed Banach space $E''_{P(\fu)}$;
and $\bf k$ is a pullback if and only if $\xi{\bf k}=0$ for all vertical $\xi$.

For the analytic continuation ${\bf k}_t$ of ${\bf h}\abt={\bf k}_0$ constructed above, $\xi{\bf k}_0=0$, because
by Corollary 4.6 ${\bf k}_0$ is the pullback of ${\bf f}\abt$ along $P$. Hence its analytic continuation 
$\xi{\bf k}_t$ also vanishes, and  
${\bf k}_t$ is the pullback of some germ ${\bf g}_t$ at $\fy_t$. By Lemma 7.2 ${\bf g}_t$ depend continuously on $t$, 
hence supply the analytic continuation of ${\bf f}\abt$.
\end{proof}

We will reduce continuation along arbitrary paths to continuation along paths as in Lemma 9.3 in the 
following way.
By a partition we will mean a decomposition $S=\coprod_{j=0}^J S_j$ into Borel sets such that $S_0=A$,
$S_j\neq\emptyset$, $j=1,\dots,J$. Given such a partition, denoted
 $\Pi$, let 
 \begin{equation}
 \fB_\Pi=\{\fy\in\fB: \fy|S_j=\text{const},\text{ for }j=1,\dots,J\}.
 \end{equation}
 If $\Pi'$ refines $\Pi$ (notation: $\Pi'\succ \Pi$), then $\fB_{\Pi'}\supset\fB_{\Pi}$.
 
 \begin{lem}
 (a) $\fB_\Pi\subset\fB$ is a closed submanifold, biholomorphic to $X^J$;
 
 (b) If $\Pi_0$ is a partition, then $\bigcup_{\Pi\succ\Pi_0}\fB_\Pi$ is dense in $\fB$;
 
 (c) Let $c\in \bR$, $\fU\subset\fB$ open, $K\to\fU$ a holomorphic Banach bundle with a flat norm $p$, and
 $\Pi_0$ a partition. Suppose for each $\Pi\succ\Pi_0$ we are given a holomorphic section $h_\Pi$
 of $K|\fB_\Pi\cap\fU$ such that $p(h_\Pi)\le c$, and $\Pi'\succ\Pi$ implies $h_{\Pi'}=h_\Pi$ on $\fB_\Pi\cap\fU$. 
 Then there is a unique holomorphic section $h$ of $K|\fU$ such that $h|\fB_\Pi\cap\fU=h_\Pi$.
 
 (d) Let $K\to\fB$ be a holomorphic Banach bundle with a flat norm. If a $K$ valued holomorphic germ 
 $\bf h$ at some $\fz\in\fB_{\Pi_0}$ has
 analytic continuation along any path in $\fB_\Pi$ for $\Pi\succ\Pi_0$, and these continuations have the same bound 
 $c$, then $\bf h$ has an analytic continuation along any path in $\fB$, bounded by $c$.
 \end{lem}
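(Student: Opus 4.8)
The plan is to establish (a)--(d) in turn, with (d) resting on (a)--(c), the first three being routine and (d) carrying the real content.

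\emph{Part (a).} Choosing $s_j\in S_j$, the evaluation $\fB_\Pi\to X^J$, $\fy\mapsto\big(\fy(s_1),\dots,\fy(s_J)\big)$, is a bijection, since a member of $\fB_\Pi$ is determined by $\fy|A=\fx_0|A$ and by its (necessarily constant) values on the $S_j$. Read in a chart $\psi_\fy$ of (5.2) with $\fy\in\fB_\Pi$, this map and its inverse are expressed through the continuous linear evaluations $\xi\mapsto\xi(s_j)$ and through $\exp$, $F$ — here one uses that $\fy$, lying in $\fB_\Pi$, is constant on each $S_j$, so $\fy^*TX|S_j$ is trivial — hence the map is biholomorphic. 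The same description shows that in $\psi_\fy$ the set $\fB_\Pi$ is a neighbourhood of $0$ in the finite-dimensional, hence complemented, subspace of sections that are constant on each $S_j$, so $\fB_\Pi$ is a direct submanifold; it is closed because a uniform limit of maps constant on the $S_j$ is again constant there. For (b), given $\fy\in\fB$ I cover the compact set $\overline{\fy(S)}$ by finitely many $d$-open sets $O_k$ of diameter $<\var$, refine $\Pi_0$ by replacing each of its non-$A$ pieces $S_i$ by the nonempty sets $S_i\cap\fy^{-1}(O_k)$, and let $\fz$ equal $\fx_0$ on $A$ and a suitable value of $\fy$ on each piece of the refinement; then $\fz\in\fB_\Pi$ for the refined partition $\Pi\succ\Pi_0$, $d_S(\fy,\fz)<\var$, and since the metric balls form a neighbourhood basis this gives density.

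\emph{Part (c).} Uniqueness is immediate: two candidate sections agree on $\bigcup_\Pi(\fB_\Pi\cap\fU)$, which is dense in $\fU$ by (b), hence agree everywhere by continuity. Existence is local, so using (b) I cover $\fU$ by sufficiently small contractible chart balls $\fU_0=D_A(\fy_0)\subset\fU$ with $\fy_0\in\fB_{\Pi^*}$ for some $\Pi^*\succ\Pi_0$, construct $h$ over each, and patch by uniqueness. Over one such $\fU_0$, trivialize $K|\fU_0$ by Theorem 6.3 and pass to the chart $\psi_{\fy_0}$: by (a), since $\fy_0\in\fB_{\Pi^*}$, for $\Pi\succ\Pi^*$ the sets $\fB_\Pi\cap\fU_0$ become neighbourhoods of $0$ in nested finite-dimensional \emph{linear} subspaces $L_\Pi$ (sections constant on the pieces of $\Pi$), whose union is dense because simple functions are uniformly dense in $B_A(\fy_0^*TX)$. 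One is thus reduced to gluing compatible $\fY$-valued holomorphic $u_\Pi$ on $L_\Pi\cap G$, $\|u_\Pi\|\le c$ ($G$ a ball about $0$), into one holomorphic $u$ on $G$. Expanding each $u_\Pi$ in its Taylor series at $0$, compatibility forces the homogeneous parts $P^\Pi_k$ to agree on overlaps, so they glue to a $k$-homogeneous map on $\bigcup_\Pi L_\Pi$ which, by the uniform Cauchy estimate $\|P^\Pi_k\|\le c/\rho^k$, extends to a continuous $k$-homogeneous polynomial $P_k$ on $B_A(\fy_0^*TX)$ with the same bound; then $u=\sum_k P_k$ is holomorphic near $0$ and restricts to $u_\Pi$ on each $L_\Pi$. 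Transporting $u$ back (and restricting to a slightly smaller ball) gives $h$ on $\fU_0$; that $h|\fB_\Pi\cap\fU_0=h_\Pi$ for \emph{all} $\Pi\succ\Pi_0$, not only $\Pi\succ\Pi^*$, follows by passing to a common refinement of $\Pi$ and $\Pi^*$ and using the hypothesized compatibility $\Pi'\succ\Pi\Rightarrow h_{\Pi'}|_{\fB_\Pi}=h_\Pi$.

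\emph{Part (d).} I run the continuation-along-a-path argument: for a path $r:[0,1]\to\fB$ from $\fz$, the set of $t$ for which ${\bf h}$ continues along $r|[0,t]$ with bound $c$ is, by uniqueness of continuation, an interval containing $0$, and it is all of $[0,1]$ once one knows the local fact (which also prevents the continuation from running off): whenever a continuation of ${\bf h}$ has reached a germ ${\bf g}$ at $\fw$ with $p({\bf g})\le c$, then ${\bf g}$ is the germ of a holomorphic section of $K$, bounded by $c$, over a whole chart ball $\fU\ni\fw$. To produce such a section, center $\fU=D_A(\fy_0)$ at a nearby $\fy_0\in\fB_{\Pi^*}$ (available by (b)), contractible; by (a), for $\Pi\succ\Pi^*$ the slice $\fB_\Pi\cap\fU$ corresponds in $\psi_{\fy_0}$ to the intersection of a star-shaped (about $0$) neighbourhood with the linear subspace $L_\Pi$, hence is simply connected, so the $\fB_\Pi$-continuations of ${\bf h}$ restrict there to a single-valued holomorphic section $h_\Pi$, bounded by $c$ by hypothesis and mutually compatible because analytic continuation along a path is intrinsic; (c) then glues the $h_\Pi$ into the required section, whose germ at $\fw$ is ${\bf g}$. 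Feeding this back into the interval argument yields the conclusion, with the same bound $c$.

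\emph{The main obstacle.} The delicate point is the reconciliation at the end of (d): the hypothesis controls analytic continuation only inside the slices $\fB_\Pi$, whereas $r$ wanders through all of $\fB$, so one must carry along the argument an invariant guaranteeing that the germ reached at each stage genuinely arises, via a section over a chart ball, from $\fB_\Pi$-continuations of ${\bf h}$ (so that the sections fed into (c) really are bounded by $c$), and one must check that the a priori multivalued $\fB_\Pi$-continuations restrict consistently across overlapping chart balls — which is forced by the uniqueness of analytic continuation together with the simple connectivity of the slices-in-charts supplied by (a). The Taylor-coefficient gluing and the uniform Cauchy estimates in (c) are the remaining points requiring care, but they become routine once the chart is recentered at a point of $\bigcup_\Pi\fB_\Pi$.
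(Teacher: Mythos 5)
Parts (a) and (b) are correct and essentially the paper's. Part (c) is also correct; you glue via Taylor coefficients and polarization, where the paper instead shows the pointwise limit on the dense directed union is uniformly Lipschitz and then verifies holomorphy on one-dimensional slices by a normal-families argument --- both routes work, and your device of recentering the chart at a point $\fy_0\in\fB_{\Pi^*}$ with $\Pi^*\succ\Pi_0$, so that the slices $\fB_\Pi\cap\fU_0$ become genuinely linear in the model space, is a sensible precaution (the paper's part (d) introduces a refinement $\Pi_1$ playing exactly this role).

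Part (d) has a genuine gap in the proof of your ``local fact.'' You define $h_\Pi$ as ``the $\fB_\Pi$-continuations of ${\bf h}$'' restricted to $\fB_\Pi\cap\fU$. But those continuations are in general multi-valued on all of $\fB_\Pi\approx X^J$ (which need not be simply connected), so ``restricting'' does not select a branch; and, more seriously, nothing in the construction ties the branch you take to the germ ${\bf g}$ at $\fw$ --- after gluing via (c) you simply assert that the resulting section has germ ${\bf g}$ at $\fw$, which you have not arranged. Simple connectivity of $\fB_\Pi\cap\fU$ gives single-valuedness of continuations \emph{inside} $\fU$; it does not pick the seed compatible with ${\bf g}$, which arrived along a path wandering through all of $\fB$ and a priori has no relation to the $\fB_\Pi$-data. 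The fix, which you name in the ``main obstacle'' paragraph but do not actually carry out, is to seed the $\fB_\Pi$-continuation not with ${\bf h}$ at $\fz$ but with the germ of the already-constructed section at a suitable $\fw_1\in\fB_{\Pi^*}\cap\fU$, and to maintain as an inductive invariant that that germ is itself a $\fB_\Pi$-continuation of ${\bf h}$ (so that the hypothesis yields the bound $c$). The paper does this with a finite recurrence $h^0,\dots,h^k$ over a chain of chart balls $\fW_{t_j}$, each $h^j$ seeded by the germ of $h^{j-1}$ at a chosen $\fw\in\fB_{\Pi_1}$; the finite subdivision also dispenses with the uniform-lower-bound-on-chart-size issue that the closedness half of your maximal-interval argument tacitly relies on. Your outline can be repaired, but the invariant must be folded into the definition of the interval rather than mentioned as an afterthought.
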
 
 
 \begin{proof}
 (a) $\fB_\Pi$ is clearly a closed subset. If $\fy\in\fB_\Pi$, local coordinates $\psi_\fy$ as in (5.2) send the
  neighborhood $D_A(\fy)$ to an open $\fV\subset B_A(\fy^*TX)$. For all $j\ge 1$ the fibers of $\fy^*TX$ over
  $S_j$ are the same tangent spaces to $X$, hence one can talk about sections of $\fy^*TX$ being constant over
  $S_j$.  Those bounded Borel sections of $\fy^*TX$ that vanish on $S_0$ and are constant over each $S_j$, $j\ge 1$,
  form a finite dimensional subspace
  \[
  B_\Pi(\fy^*TX)\subset B_A(\fy^*TX),
  \]
  and the local coordinate $\psi_\fy$ sends $\fB_\Pi\cap D_A(\fy)$ to $B_\Pi(\fy^*TX)\cap\fV$. 
  Therefore $\fB_\Pi$ is 
  indeed a submanifold of $\fB$. One can then check that the map that sends $(x_1,\dots,x_J)\in X^J$ to 
  $\fz\in\fB_\Pi$ defined by $\fz(s)=x_j$ if $s\in S_j$, $j=1,\dots,J$, is biholomorphic.
  
  (b) Let $\fy\in\fB$ and $\var>0$. To find a partition $\Pi\succ\Pi_0$ and $\fz\in\fB_\Pi$ in the $\var$ neighborhood of 
  $\fy$, cover the closure of $\fy(S)\subset X$ by finitely many balls of radius $\var$, choose $\Pi\succ \Pi_0$,
  $S=\coprod_{j=0}^J S_j$, so that each $\fy(S_j)$ is contained in one of those balls, centered at,
  say, $y_j\in X$, $j\ge 1$. Then $\fz\in\fB_\Pi$ given by 
  \begin{equation*} 
  \fz(s)=\begin{cases}\fx(s)& \text{ when } s\in S_0=A \\   y_j& \text{ when } s\in S_j, \quad 
  i=1,\dots, J\end{cases}
  \end{equation*}
  will do.
  
  (c) We can assume that $\fU$ is  contained in a coordinate neighborhood about some $\fy\in\fB$ and
   that $(K,p)$ is trivial over it. Then instead of sections one can talk about
  Banach space valued functions. The local coordinate $\psi_\fy$ maps
  $\fU$ to a neighborhood $\fW\subset B_A(\fy^*TX)$ of the zero section, and $\fB_\Pi\cap\fU$ into
  $B_\Pi(\fy^*TX)$ from the proof of part (a). A hermitian metric 
  on $X$ induces a norm on $B_A(\fy^*TX)$, and we can assume that $\fW$ is the unit ball.
  
  Accordingly, the claim will be proved once we show the following. Let $\fX$, $(\fY,||\,\,||)$ be Banach spaces,
  $\fW\subset\fX$ the unit ball, and $c\in\bR$. Suppose for each partition $\Pi\succ\Pi_0$ we are 
  given a closed subspace $\fX_\Pi\subset \fX$ and $h_\Pi\in\cO(\fX_\Pi\cap \fW,\fY)$ with 
  the following properties.
  First, $||h_\Pi||\le c$; second, $\fX_{\Pi'}\supset\fX_\Pi$ and $h_{\Pi'}|\fX_{\Pi}\cap \fW=h_\Pi$ 
  if $\Pi'\succ\Pi$; and 
  finally, $\fZ=\bigcup_{\Pi\succ\Pi_0}\fX_\Pi$ is dense in $\fX$. Then there is a unique $h\in\cO(\fW,\fY)$ 
  such that
  $h|\fX_\Pi\cap\fW=h_\Pi$ and $||h||\le c$. (Of course, partitions here are irrelevant, $\Pi$ could denote elements of an
  arbitrary directed set.)
  
  The density of $\fZ$ implies that $h$ is unique. To construct $h$, define $g:\fZ\cap\fW\to\fY$ by 
  $g(x)=h_\Pi(x)$ if $x\in\fX_\Pi\cap \fW$. Cauchy estimates for the derivatives of $h_\Pi$ imply that
  on a concentric ball $\fW_r\subset\fW$ of radius $r$ the Lipschitz constant of $h_\Pi$ is $\le c/(1-r)$, hence
  the same holds for $g$. In particular, if $x\in\fW$ and a sequence $x_i\in\fZ\cap \fW$ converges to $x$,
  then $g(x_i)$ form a Cauchy sequence. Define $h(x)=\lim_i g(x_i)$; the limit is independent of the
  sequence $x_i\to x$. The function $h$ constructed is continuous and its norm
  is bounded by $c$. To complete the
  proof we need to check that for any $x,y\in\fX$ and $\Lambda=\{\lambda\in\bC:x+\lambda y\in \fW\}$,
  the function $f:\Lambda\ni\lambda\mapsto h(x+\lambda y)$
  is holomorphic. Let $\Lambda_0\subset \Lambda$ be open and relatively compact. Choose sequences 
  $\Pi_i\succ \Pi_0$, and $x_i,y_i\in\fX_{\Pi_i} $ convergent to $x,y$. Then 
  $f_i(\lambda)=h(x_i+\lambda y_i)=h_{\Pi_i}(x_i+\lambda y_i)$ is holomorphic on $\Lambda_0$ when
  $i$ is sufficiently large. Since $||f_i||\le c$ and $f_i\to f$ on $\Lambda_0$, it follows that $f$ is holomorphic,
  and therefore so is $h$.
  
  (d) Let $r:[a,b]\to \fB$ be a path, $\bf h$ a germ at $r(a)=\fz$. Let $\fW_t\subset D_A\big(r(t)\big)$ 
  be neighborhoods of $r(t)$ whose image under the local coordinate 
  $\psi_{r(t)}$ is convex, $r(a)\in\fW_t$ for $t$ in a neighborhood of $a$,
  and $\bf h$ is the germ of a holomorphic section $h^0$ of $K|\fW_a$. 
  Then $\fB_\Pi\cap\fW_t$ for each $\Pi$ is contractible  or  empty. 
   Choose $a=t_0<t_1<\dots<t_k=b$ so that  $r[t_{j-1},t_j]\subset\fW_{t_j}$ when $1\le j\le k$. 
  By recurrence we construct holomorphic sections $h^j$ of $K|\fW_{t_j}$ that
  define the analytic continuation of $\bf h$ along $r$. If $j\ge 1$ and $h^i$ for $i<j$ has been
  constructed, choose a partition $\Pi_1\succ\Pi_0$ and $\fw$ in the connected component of
  $r(t_{j-1})$ in $\fB_{\Pi_1}\cap\fW_{t_{j-1}}\cap\fW_{t_j}$. The analytic continuations of the germ of
  $h^{j-1}$ at $\fw$ along paths in $\fB_\Pi\cap\fW_{t_j}$, $\Pi\succ\Pi_1$, define holomorphic sections
  $h_\Pi$ of $K|\fB_\Pi\cap \fW_{t_j}$.  Since these sections satisfy the conditions in (c), they are 
  restrictions of a holomorphic section $h^j$ of $K|\fW_{t_j}$.  By the uniqueness part of (c) the germs of 
  $h^{j-1}$ and $h^j$ agree at $\fw$, therefore in the connected component of $\fw$ in 
  $\fW_{t_{j-1}}\cap\fW_{t_j}$. In particular, $h^{j-1}$ and $h^j$ agree near $r(t_{j-1})$. Thus the collection
  $h^j$ indeed gives rise to an analytic continuation of  $\bf h$ along $r$.
  \end{proof}
  
  \begin{proof}[Proof of Theorem 7.5]  It suffices to construct continuations along paths 
  $[0,1]\ni t\mapsto\fx_t\in\fB$, $\fx_0=\fx$.
  Let $\delta>0$ as in Lemma 9.3 and $a\in(0,1/2)$ be such that the $\delta$--neighborhood $\fU\subset\fB$
  of  $\fx$ is contained in $D_A(\fx)$ and contains $\fx_t$ when $t\in[0,2a]$. 
   Choose a partition $S=\coprod_{i=0}^I T_i$, denoted $\Pi_0$, such that $\fB_{\Pi_0}$ intersects 
  $\fU$  (Lemma 9.4b), say, $\fy\in \fB_{\Pi_0}\cap \fU$. Since the 
   image of $\fU$ under the local coordinate $\psi_\fx$ is convex, there is a path 
  $[0,2a]\ni t\mapsto \fy_t\in\fU$ from $\fx$ to $\fx_{2a}$ such that $\fy_a=\fy$; extend it to 
  $2a\le t\le 1$ by $\fy_t=\fx_t$. The germs of $f\abt$ provide  continuation of ${\bf f}\abt$ along both
  $\fx_t,\fy_t$, $t\in[0,2a]$. These continuations give the same result at $\fx_{2a}$, that we denote
   ${\bf g}$. Let $\bf h$ be the germ of $f\abt$ at $\fy_a$. 
  To produce analytic continuation of
  ${\bf f}\abt$ along  $\fx_t$, $t\in[0,1]$, it suffices to show continuation of ${\bf g}$ along $\fx_t$,
  $t\in[2a,1]$, or of $\bf h$ along $\fy_t$, $t\in[a,1]$. But Lemma 9.3 guarantees that this latter
  continuation exists along paths $\fy'_t$ for which
  $\fy'_t\in \fB_\Pi$ when $t\in [a,1]$; here $\Pi\succ\Pi_0$ is arbitrary. Hence, by Lemma 9.4d, 
  the continuation exists along $\fy_t$ as well. This completes the proof.
  \end{proof}
  
  \section{The Monodromy theorem} 
  
  In this section we prove the following generalization of Theorem 0.2. Consider mapping spaces
  $\fB,\fC$ determined by a compact Hausdorff space $S$, a complex manifold $X$, a continuous
  $\fx\in C(S,X)$ and a closed $A\subset S$, as before.   
  \begin{thm}
  Suppose $X$ is simply connected, $E\to\fB$ is a holomorphic Banach bundle with
  a flat norm $p$,
and an $E|\fC$ valued holomorphic germ $\bf f$ admits bounded analytic 
  continuation along any path in $\fC$. Then $\bf f$ is the germ of a holomorphic section $f$ of $E|\fC$. If,
  furthermore, $X$ is compact, then $f$ is a horizontal section (Definition 6.4).
  \end{thm}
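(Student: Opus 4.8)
The plan is to push the problem down to the finite-dimensional submanifolds $\fB_\Pi\cong X^J$ of Lemma 9.4a: simple connectivity of $X$ makes each $\fB_\Pi$ simply connected, which forces single-valuedness, and compactness of $X$ makes each $\fB_\Pi$ a compact complex manifold, on which bounded holomorphic maps into Banach spaces are constant, which forces horizontality; one then glues over $\fB$ by Lemma 9.4c,d and descends from $\fB$ to $\fC$ via the naturality of the Aron--Berner extension. The delicate part will be the first assertion, where single-valuedness of ${\bf f}\abt$ is not a matter of $\fB$ being small but must be extracted from the density of the $\fB_\Pi$ (Lemma 9.4b) together with the simple connectivity of $X$, and then transported back to $\fC$ through the (somewhat fussy) injectivity and naturality of the Aron--Berner extension. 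The second assertion will then be essentially the classical observation that a holomorphic function on a compact complex manifold is constant, read through the flat trivializations of Theorem 6.3.

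\emph{First assertion.} By Theorem 7.5, ${\bf f}\abt$ admits analytic continuation along every path in $\fB$, bounded by the same $C$ as the continuations of $\bf f$ in $\fC$. (Note $\fB$ is connected: $\bigcup_{\Pi\succ\Pi_0}\fB_\Pi$ is connected, all its pieces meeting the connected $\fB_{\Pi_0}$, and it is dense by Lemma 9.4b.) Fix $\Pi_0$, a point $\fz\in\fB_{\Pi_0}$, and the germ $\bf h$ obtained by continuing ${\bf f}\abt$ to $\fz$. For each $\Pi\succ\Pi_0$, restricting the continuations of $\bf h$ to paths inside the connected, simply connected $\fB_\Pi$ gives, by the single-valuedness principle of Section 7, a holomorphic section $h_\Pi$ of $E''|\fB_\Pi$ with $p''(h_\Pi)\le C$, and $h_{\Pi'}|\fB_\Pi=h_\Pi$ whenever $\Pi'\succ\Pi$. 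Lemma 9.4c glues these into a holomorphic section $f\abt$ of $E''|\fB$ with $p''(f\abt)\le C$; a local version of the same gluing, applied on contractible charts, identifies the germ of $f\abt$ at each point with the analytic continuation of ${\bf f}\abt$ to that point, so ${\bf f}\abt$ is single-valued on $\fB$. Consequently the continuation of ${\bf f}\abt$ along any loop in $\fC\subset\fB$ is trivial; by Lemma 7.4 that continuation equals $({\bf f}_1)\abt$, with ${\bf f}_1$ the continuation of $\bf f$, and since ${\bf g}\mapsto{\bf g}\abt$ is injective (the Aron--Berner extension restricts to the original germ, cf.\ Theorem 3.2, Lemma 4.2, Theorem 4.3) we get ${\bf f}_1={\bf f}$. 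Hence analytic continuation of $\bf f$ in the connected $\fC$ is single-valued and produces a holomorphic section $f$ of $E|\fC$, with $f\abt|\fC=f$.

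\emph{Second assertion ($X$ compact).} Each $\fB_\Pi\cong X^J$ is now a compact connected complex manifold and, being simply connected, admits by Theorem 6.3 a flat trivialization $g_\Pi\colon E''|\fB_\Pi\to\fB_\Pi\times W_\Pi$; thus $f\abt|\fB_\Pi$ becomes a norm-bounded holomorphic map $\fB_\Pi\to W_\Pi$. Composing with any $\ell\in W_\Pi'$ yields a holomorphic function on a compact connected complex manifold, hence a constant, so by Hahn--Banach $f\abt|\fB_\Pi$ is itself constant in the trivialization, i.e.\ horizontal. To spread this over $\fB$, fix $\fz_0\in\fB$ and a chart $\fW\ni\fz_0$ with convex image under $\psi_{\fz_0}$, so that $\fB_\Pi\cap\fW$ is contractible or empty for each $\Pi$ (as in the proof of Lemma 9.4d), together with a flat trivialization $g\colon E''|\fW\to\fW\times W$ (Theorem 6.3, $\fW$ simply connected). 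Then $g\circ f\abt$ is constant on each connected $\fB_\Pi\cap\fW$, with value $w_\Pi\in W$; since $\Pi'\succ\Pi$ forces $\fB_\Pi\cap\fW\subset\fB_{\Pi'}\cap\fW$ and the relevant partitions form a directed set, all the $w_\Pi$ coincide, so $g\circ f\abt$ is constant on the dense set $\bigcup_\Pi(\fB_\Pi\cap\fW)$ (Lemma 9.4b) and hence on $\fW$. Thus $f\abt$ is horizontal on $\fB$. Finally, taking the flat trivializations near points of $\fC$ to be second transposes of flat trivializations of $E$ (Section 6), and using $f\abt|\fC=f$ together with the injectivity of the canonical embeddings of fibers, horizontality of $f\abt$ descends to horizontality of $f$ as a section of $E|\fC$.
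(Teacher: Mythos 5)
Your argument is correct, but it deliberately sidesteps the one ingredient the paper actually proves: Lemma~10.2, which establishes that $\fB$ itself is simply connected (by expressing $\pi_\nu(\fB,\bar\fx)$ as the direct limit of $\pi_\nu(\fB_\Pi,\bar\fx)$). With that lemma in hand the paper's argument is short: Theorem~6.3 trivializes $(E,p)$ over the whole simply connected $\fB$ at once, so one may replace $E$ valued sections by $\fY$ valued functions; Theorem~7.5 then gives a {\em single valued} holomorphic $g:\fB\to\fY''$ (monodromy on a simply connected base), and $f=g|\fC$ finishes the first assertion, while constancy of $g$ on the compact $\fB_\Pi$ plus density immediately gives the second. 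You instead never prove $\fB$ is simply connected: you only use that each $\fB_\Pi\cong X^J$ is, and recover single-valuedness by gluing the resulting sections $h_\Pi$ over $\fB$ via Lemma~9.4c, then arguing (à la Lemma~9.4d on convex charts) that the glued section reproduces the analytic continuations of ${\bf f}\abt$; descent to $\fC$ then uses Lemma~7.4 plus injectivity of $\abt$. For the second assertion you trivialize $(E'',p'')$ only over each $\fB_\Pi$ and over small charts $\fW$, and use directedness of the partitions to show the local trivialized section is constant. This is a genuinely different, more ``piecewise'' route: it buys you independence from the full strength of Lemma~10.2 (you only need connectivity of $\fB$, which you verify directly), at the cost of repeated local gluing arguments and a somewhat informal justification of the key step that germs of the glued section $f\abt$ coincide with the analytic continuations of ${\bf f}\abt$ everywhere (that step is the heart of what Lemma~10.2 makes painless and deserves to be spelled out along the lines of the open-and-closed argument for comparing a global section with a continuation along a path). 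The paper's route, by packaging the topology into Lemma~10.2 and trivializing once globally, is cleaner and shorter.
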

  
  Fix a partition $S=\coprod_{j=0}^J S_j$ as in section 9, denote it $\Pi_0$, 
  and let $\bar\fx\in\fB_{\Pi_0}$, see (9.3). Given $\nu=0,1,\dots$, inclusions induce 
  a directed system of homotopy
  groups/sets and homomorphisms
  \begin{equation}
  \pi_\nu(\fB_\Pi,\bar\fx)\to\pi_\nu(\fB_{P},\bar\fx),\qquad P\succ\Pi\succ\Pi_0.
  \end{equation}
 
 \begin{lem}
 $\pi_\nu (\fB,\bar\fx) $ is isomorphic to the direct limit of the system (10.1). Hence, if $X$ is connected (our 
 standing assumption),
 resp. simply connected, then so is $\fB$.
 \end{lem}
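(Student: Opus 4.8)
The plan is to show that the natural map $\varinjlim_\Pi \pi_\nu(\fB_\Pi,\bar\fx)\to\pi_\nu(\fB,\bar\fx)$ induced by the inclusions in (10.1) --- the direct limit being over the directed set of partitions refining $\Pi_0$ --- is bijective, and an isomorphism of groups once $\nu\ge1$; the last sentence of the lemma will then follow formally. The argument will rest on two simple observations. First, for any continuous map $g$ from a compact polyhedron $K$ to $\fB$, the closure $Q$ of $\bigcup_{k\in K}g(k)(S)$ is compact in $X$: this I would read off from the compactness, hence $d_S$-boundedness, of $g(K)$ in $B(S,X)$, together with the local compactness of $X$. Second, I would record the \emph{canonical homotopy}: if $g_0,g_1\colon K\to\fB$ are continuous with $g_1(k)\in D_A\bigl(g_0(k)\bigr)$ for all $k$ (with $D,F$ as in Lemma 2.1), then $(k,\lambda)\mapsto\bigl[\,s\mapsto\exp_{g_0(k)(s)}\bigl(\lambda F(g_0(k)(s),g_1(k)(s))\bigr)\bigr]$ is a homotopy $g_0\simeq g_1$ in $\fB$ (continuity as in Lemma 2.2); it is stationary at every $k$ with $g_0(k)=g_1(k)$, and, because $F$ and $\exp$ act fibrewise over $X$, the path at $k$ stays inside $\fB_\Pi$ whenever $g_0(k),g_1(k)\in\fB_\Pi$.

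The heart of the matter will be a compression step. Given a compact polyhedron $K$, a subpolyhedron $L$, a partition $\Pi_1\succ\Pi_0$, and a continuous $g\colon K\to\fB$ with $g(L)\subset\fB_{\Pi_1}$, I would proceed as follows. Form $Q$ as above and fix $\rho>0$, depending on $X$, on the chart $F$, and on $\dim K$, so small that $d(x,y)<\rho$ with $x\in Q$ forces $(x,y)\in D$ and that iterated $\exp$-geodesics between points within $d$-distance $(\dim K)\rho$ of $Q$ stay in the chart domain. Triangulate $K$ so finely that $L$ is a subcomplex, the basepoint is a vertex, and $g$ oscillates in $d_S$ by less than $\rho$ on each closed simplex. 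At each vertex $v$ pick, by the density statement Lemma 9.4b, a simple map $\fz_v\in\fB_{\Pi_v}$ with $\Pi_v\succ\Pi_0$ and $d_S(g(v),\fz_v)<\rho$, taking $\fz_v=g(v)$ (so $\Pi_v=\Pi_1$) when $v\in L$. Let $\Pi_2$ be the common refinement of all the $\Pi_v$; then every $\fz_v$, and also $\bar\fx$, lies in $\fB_{\Pi_2}$, which contains $\fB_{\Pi_1}$. Now interpolate the $\fz_v$ over simplices by iterated fibrewise $\exp$-geodesics in $X$ (de Casteljau's construction, with a fixed global ordering of the vertices so that the pieces match along common faces); since each step is performed fibrewise over $X$, this yields a continuous $\fz\colon K\to\fB_{\Pi_2}$ with $\fz(v)=\fz_v$, in particular $\fz=g$ at every vertex of $L$. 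The triangle inequality gives $d_S(g(k),\fz(k))<c\rho$ with $c$ depending on $\dim K$; for $\rho$ small this forces $\fz(k)\in D_A\bigl(g(k)\bigr)$, and the canonical homotopy then produces $g\simeq\fz$ in $\fB$, stationary at the vertices of $L$ and keeping each $k\in L$ inside $\fB_{\Pi_1}\subset\fB_{\Pi_2}$ throughout.

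Surjectivity I would obtain by applying this with $K=S^\nu$, $L=\{*\}$, $\Pi_1=\Pi_0$: then $g\simeq\fz$ rel.\ basepoint and $\fz(S^\nu)\subset\fB_{\Pi_2}$, so $[g]$ lies in the image of $\pi_\nu(\fB_{\Pi_2},\bar\fx)$. For injectivity, take $g_0,g_1\colon(S^\nu,*)\to(\fB_\Pi,\bar\fx)$ joined in $\fB$ by a basepoint-preserving homotopy $H$, and compress $H$ with $K=S^\nu\times[0,1]$, $L=(S^\nu\times\{0,1\})\cup(\{*\}\times[0,1])$, $\Pi_1=\Pi$, obtaining $P:=\Pi_2\succ\Pi$ and $\widehat H=\fz\colon K\to\fB_P$. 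Restricting the canonical homotopy $H\simeq\widehat H$ to $S^\nu\times\{0\}$ gives a homotopy $g_0\simeq\widehat H(\cdot,0)$ that stays in $\fB_P$ (its two ends lie in $\fB_\Pi\subset\fB_P$ and the path is fibrewise over $X$) and fixes the basepoint; likewise $g_1\simeq\widehat H(\cdot,1)$ in $\fB_P$; and $\widehat H$ connects $\widehat H(\cdot,0)$ to $\widehat H(\cdot,1)$ in $\fB_P$, rel.\ basepoint. Hence $[g_0]=[g_1]$ in $\pi_\nu(\fB_P,\bar\fx)$, which is exactly injectivity of the limit map. Finally, if $X$ is connected then each $\fB_\Pi\cong X^J$ (Lemma 9.4a) is connected, and Lemma 9.4b together with a canonical path joins an arbitrary $\fy\in\fB$ to some $\fz\in\fB_\Pi$ and then, inside $\fB_\Pi$, to $\bar\fx$; so $\fB$ is connected. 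If moreover $X$ is simply connected, every $\pi_1(\fB_\Pi,\bar\fx)\cong\pi_1(X^J)$ vanishes, hence so does the direct limit, and therefore $\pi_1(\fB,\bar\fx)$ vanishes; so $\fB$ is simply connected.

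I expect the one genuinely delicate point to be the bookkeeping around the interpolation: checking that de Casteljau's construction is face-consistent and $d_S$-continuous, and bounding the cumulative drift of the iterated fibrewise geodesics so that $\fz(k)$ really lands in $D_A\bigl(g(k)\bigr)$ --- this is precisely where $\rho$ must be chosen small in a manner depending on $\nu=\dim S^\nu$ (resp.\ $\dim S^\nu+1$). Everything else is a routine assembly of Lemmas 2.1, 2.2, 9.4a and 9.4b.
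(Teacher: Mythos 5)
Your proposal is correct in substance and follows the same overall skeleton as the paper --- the canonical map $\varinjlim_\Pi\pi_\nu(\fB_\Pi,\bar\fx)\to\pi_\nu(\fB,\bar\fx)$ is shown bijective by compressing maps and homotopies into some $\fB_P$, and the last sentence is read off from Lemma 9.4a exactly as in the paper --- but your compression step is genuinely different. The paper proves a single deformation statement for an arbitrary continuous $\phi:(T,C)\to(\fB,\fB_\Pi)$, $T$ compact Hausdorff, and the deformation is a canonical ``discretization in $S$'': all values lie in a compact subset of $X$, one chooses a fine Borel partition $S=\coprod_i R_i$ refining $\Pi$ with sample points $s_i\in R_i$, and slides, inside the charts of Lemma 2.1, each value $\phi(t)(s)$ linearly to $\phi(t)(s_i)$; at time $1$ the map is constant on each $R_i$, hence lies in $\fB_P$, continuity in $t$ is automatic, and the homotopy is stationary on $C$ because there $\phi(t)$ is already constant on each $R_i$. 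No triangulation of the parameter space, no vertex approximation (Lemma 9.4b is not used in that proof), and no interpolation occur. Your route --- triangulate $K$, approximate at the vertices via Lemma 9.4b, interpolate by face-consistent iterated fibrewise geodesics, then join to $g$ by the straight-line chart homotopy --- does work (and the auxiliary homotopies you add in the injectivity step correctly absorb the fact that your $\widehat H$ only agrees with $H$ at the vertices of $L$), and it has the virtue that every chart operation compares $g(k)$ only with a uniformly nearby map, so the chart centers move with the data; the price is exactly the bookkeeping you flag: de Casteljau face-consistency and drift constants depending on $\dim K$, plus the fact that compactness (not mere $d_S$-boundedness) of $g(K)$ is what makes $\bigcup_k g(k)(S)$ relatively compact. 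If you want to avoid the delicate part, borrow the paper's device but apply it to $g$ itself: using compactness of $g(K)$ in $(\fB,d_S)$, fix one partition $\coprod_i R_i$ refining $\Pi_1$ and points $s_i\in R_i$ so that every $g(k)$ has small oscillation on each $R_i$, set $\fz(k)(s)=g(k)(s_i)$ for $s\in R_i$, and run your canonical fibrewise homotopy from $g(k)$ to $\fz(k)$ in the chart centered at $g(k)(s_i)$; this is continuous in $k$, stationary on all of $L$ (so the extra homotopies in your injectivity step become unnecessary), ends in $\fB_P$, and eliminates the triangulation and interpolation entirely. (One caveat if you transplant the paper's formula verbatim: it centers the charts at finitely many fixed points $x_i$, which tacitly asks each orbit $\{\phi(t)(s):t\in T\}$ to stay in a single $D^{x_i}$; centering at the sampled value $g(k)(s_i)$, as above, needs only small oscillation of each individual $g(k)$ and is the robust formulation --- this is a point where your moving-center setup is actually the safer one.)
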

 \begin{proof}
 We need to show that any continuous map $(S^\nu,*)\to(\fB,\bar\fx)$, and any homotopy between two
 such maps, can be deformed to a map, respectively homotopy, into $\fB_\Pi$ with some partition 
 $\Pi\succ\Pi_0$; and that any homotopy
 in $(\fB,\bar\fx)$ between continuous maps $f_1,f_2:(S^\nu,*)\to (\fB_\Pi,\bar\fx)$ can be deformed,
 fixing $f_1,f_2$, to a homotopy in some $(\fB_P,\bar\fx)$, $P\succ\Pi$. 
 Once this done, we can define a homomorphism $\pi_\nu(\fB,\bar\fx)\to\varinjlim \pi_\nu(\fB_\Pi,\bar\fx)$
 by first sending the class in $\pi_\nu(\fB,\bar\fx)$ of an $f:(S^\nu,*)\to(\fB,\bar\fx)$ to an
 $f_1:(S^\nu,*)\to(\fB_\Pi,\bar\fx)$ homotopic to $f$, and then to
 the element of $\varinjlim \pi_\nu(\fB_P,\bar\fx)$
 that the class $[f_1]\in\pi_\nu(\fB_\Pi,\bar\fx)$ induces. This homomorphism is automatically bijective.  
 
 The deformation statements  are special cases of the following: Let
 $T$ be a compact Hausdorff space, $C\subset T$ closed. 
 For any continuous $\phi:(T,C)\to(\fB,\fB_\Pi)$ there is a continuous deformation 
 $\phi_\tau: (T,C)\to(\fB,\fB_\Pi)$, $0\le\tau\le 1$, such that $\phi_0=\phi$, $\phi_\tau|C=\phi|C$ for all $\tau$,
 and $\phi_1(T)\subset \fB_P$ with some $P\succ\Pi$. 
  To verify this, we will use $D\subset X\times X$, $D^x$, $F:D\to TX$ of Lemma 2.1, and $\exp:TX\to X$
  of (2.4).  Since $\phi(t)\in\fB$ is a continuous function of $t\in T$,
 and each $\phi(t)(S)\subset X$ has compact closure, there is a compact $L\subset X$ that contains all
 $\phi(t)(S)$.  Cover $L$ by finitely many coordinate neighborhoods $D^{x_i}$, and choose a partition $P\succ\Pi$, given by 
 $S=\coprod_{i=0}^I R_i$, such that $\phi(t)(R_i)\subset D^{x_i}$ for
 every $t\in T$ and $i=1,\dots, I$. If $s_i\in R_i$, $i\ge 1$, then
 \begin{equation*}
 \phi_\tau(t)(s)=\begin{cases}\phi(t)(s)=\fx(s) &\text{ if } s\in R_0=A\\ 
 \exp\big\{(1-\tau)F\big(x_i,\phi(t)(s)\big)+\tau F\big(x_i,\phi(t)(s_i)\big)\big\}&\text{ if } s\in R_i,\quad i=1,\dots, I
 \end{cases}
 \end{equation*}
  is the deformation sought.
 
 The second statement of the lemma follows because by Lemma 9.4a the  spaces $\fB_\Pi$ are
 homeomorphic to Cartesian powers of $X$, hence for (simply) connected $X$ they themselves are 
 (simply) connected.
 \end{proof} 
 
 \begin{proof}[Proof of Theorem 10.1]
 Since  $\fB$ is connected and simply connected by Lemma 10.2, $(E,p)$ is trivial. Again, instead of
 sections of $(E,p)$ we can talk about functions with values in some fixed Banach space $(\fY,||\,\,||)$. Accordingly,
 we take $\bf f$ to be a $\fY$ valued germ. By Theorem 7.5 ${\bf f}\abt$ admits bounded analytic continuation
 along any path in $\fB$; again because $\fB$ is simply connected, this implies that ${\bf f}\abt$ is the
 germ of a holomorphic $g:\fB\to\fY''$. Hence $\bf f$ is the germ of the holomorphic function $f=g|\fC$. Since
 its germ $\bf f$ takes values in $\fY\subset\fY''$, all the values that $f$ takes are in $\fY$.
 
 If, in addition, $X$ is compact, then the submanifolds $\fB_\Pi\approx X^J$ are also compact 
 (and connected), 
 hence the restrictions $g|\fB_\Pi$ are constant. As $\bigcup_\Pi\fB_\Pi$ is dense in $\fB$, Lemma 9.4b,
 this implies that $g$ itself is constant, and therefore $f$ is constant.
 \end{proof}

 \section{Examples}
 
 In this section we have collected examples that show that some assumptions in Theorems 7.5, 10.1 cannot
 be dispensed with, or that the conclusions cannot be strengthened. We keep notation used in sections
 7, 10.
 
 \begin{ex} 
 There are mapping spaces $\fC,\fB$ and a bounded holomorphic function $f:\fC\to\bC$ with
 germ $\bf f$ at some $\fx_0$ such  that ${\bf f}\abt$ is not the germ of a single valued holomorphic function
 $g:\fB\to\bC$.
 \end{ex}
 Let $r\in(1,\infty)$, $X=\{x\in\bC:1/r<|x|<r\}$ an annulus, $S=[0,1]$, $A=\emptyset$, and
 $\fx_0\equiv 1$. Thus $\fB,\fC$ are open sets in the Banach spaces $B(S)$, $C(S)$. If $\fx\in\fC$,
  define $f(\fx)=\sqrt{\fx(1)/\fx(0)}$, using the branch of the square root obtained by  
 continuation from $\sqrt 1=1$ along the path $[0,1]\ni t\mapsto \fx(t)/\fx(0)$. Let $\bf f$ be the germ
 of $f$ at $\fx_0$. In a neighborhood of $\fx_0$, we can write
 $f(\fx)=\sqrt{\phi(\fx)/\psi(\fx)}$ with holomorphic functions
 \begin{equation}
 \phi(\fx)=\fx(1),\quad \psi(\fx)=\fx(0)
 \end{equation}
 on $C(S)$ and the principal branch of square root. Here $\phi,\psi$ are linear functions, 
 and their extensions $\phi\abt, \psi\abt$ to $B(S)$ are given by the same formula (11.1). 
 Since the Aron--Berner
 extension is compatible with multiplication, it follows that ${\bf f}\abt$ is the germ of
 \[
 g(\fx)=\sqrt{\fx(1)/\fx(0)},\qquad \fx\in\fB\text{ in a neighborhood of }\fx_0.
 \]
 But ${\bf f}\abt$ has no single valued continuation e.g. along the path $[0,1]\ni t\to\fx_t\in\fB$,
 \begin{equation*}
 \fx_t(s)=\begin{cases}e^{2\pi it} &\text{ if }s=1\\1 &\text{ if } s\neq 1.\end{cases}
 \end{equation*}
 
 \begin{ex} 
 There are mapping spaces $\fB,\fC$ and (unbounded) $f\in\cO(\fC)$ with germ
 $\bf f$ at some $\fx_0\in\fC$ such that ${\bf f}\abt$ cannot be continued along some path in $\fB$.
 \end{ex}
 This is a minor reformulation of a result of Dineen. Let $X=\bC$, $S=\{0,1/n:n\in\bN\}\subset\bR$ with the
 topology inherited from $\bR$, $A=\emptyset$, and $\fx_0\equiv 0$. Thus $\fC$ is biholomorphic to
 $c$, the space of convergent sequences of complex numbers, and $\fB\supset\fC$ is biholomorphic to 
 $c''=l^\infty\supset c$. In particular, $\fB$ is simply connected, whence if a germ $\bf g$ on it has analytic
 continuation along any path, then it is the germ of a holomorphic function on $\fB$. Since 
 according to Dineen and Josefson \cite{D71, J78} there
 are holomorphic functions $f$ on $c$ that do not extend to $l^\infty$, our claim follows. In fact,
 not only ${\bf f}\abt$ cannot be continued analytically along some path, but no extension $\bf g$ of ${\bf f}$ 
 to a germ on $\fB$ has analytic continuation along all paths in $\fB$.
 
 \begin{ex} 
 (a) Unless $X$ is contractible, it has a mapping space $\fC$ with a flat normed holomorphic Banach
 bundle $(E,p)\to\fC$, and there is an $E$ valued holomorphic germ at some $\fx_0\in\fC$ that has bounded 
 analytic continuation along any path in $\fC$, yet does not extend to a holomorphic section of $E$. If $X$ is
 simply connected, then $E$ can be chosen a line bundle.
 
 (b)  If a Stein manifold $X$ is not contractible, it has a corresponding mapping space $\fC$, and there is a
 $\bC$ valued holomorphic germ $\bf f$ at some $\fx_0\in\fC$ that continues analytically along any path in
 $\fC$, yet $\bf f$ does not extend to a holomorphic function $\fC\to  \bC$.
 \end{ex}
 Both in (a) and (b) $X$ can be simply connected. Thus (b) shows that in Theorem 10.1 the assumption 
 that the analytic continuations are bounded cannot be simply dropped,
 and (a) shows that even boundedness is not sufficient unless it is known that $(E,p)$ extends to a flat
 bundle over $\fB$.---If $X$ is contractible, then its mapping spaces are also contractible, and the phenomena
 described in Example 11.3 will not occur.
 
 If $X$ is not contractible, according to Whitehead it has a nontrivial homotopy group $\pi_\nu(X,x_0)$, $\nu\ge 1$
 \cite[Theorem 1]{W49}. Let $S=S^{\nu-1}$, $A\subset S$ a singleton, $\fx_0\equiv x_0$, and
 $\fC=C(S,X,A,\fx_0)$. Since
 $\pi_1(\fC,\fx_0)\approx\pi_\nu(X,x_0)\neq 0$, there is a nontrivial covering $\pi:C\to \fC$ by a connected complex
 Banach manifold $C$. For $\fx\in\fC$ let $C_\fx=\pi^{-1}(\fx)$, and let  $E_\fx$ denote the Hilbert space of functions 
 $e:C_\fx\to\bC$ with $l^2$ norm  $p(e)=\big(\sum_{v\in C_\fx}|e(v)|^2\big)^{1/2}<\infty$. On
 $E=\coprod_{ \fx\in\fC} E_\fx$ the local trivializations of $C\to\fC$ induce the structure of a holomorphic Banach bundle 
 over $\fC$, with flat norm $p$. 
 
 The pullback $\pi^*(E,p)\to C$ is trivial, and has a horizontal section $g$ (Definition 6.4)
 whose value at $y\in C_\fx$ is the 
 delta function $e\in E_\fx$ given by  $e(y)=1$, $e(v)=0$ if $v\neq y$. The germ $\bf g$ of $g$ at any point in 
 $C_{\fx_0}$ pushes forward to a holomorphic $E$ valued germ $\bf f$ at $\fx_0$; the pushforwards of other
 germs of $g$ provide bounded analytic continuations of $\bf f$ along any path in $\fC$; but $\bf f$ is not the germ
 of a section $f$ of $E$. If it were, $g$ would be the pullback of $f$, and the singleton supports of the functions 
 $f(\fx):C_\fx\to\bC$ would provide a global section of the covering $C\to\fC$, which is impossible.

 If $X$ is simply connected, then the above construction can be modified to produce a line bundle $E\to\fC$.
 Namely, $\nu\ge 2$ then, whence $G=\pi_1(\fC,\fx_0)\approx\pi_\nu(X,x_0)$
 is Abelian. By Pontryagin duality it admits a nontrivial representation $\rho:G\to \text{U}(1)$, 
 \cite[Section 37]{P66}. This induces a flat line bundle $(E,p)\to\fC$, obtained from the trivial line bundle
 $\tilde\fC\times\bC\to\tilde\fC$ over the universal cover of $\fC$, endowed with the trivial norm = Hermitian
 metric, by factoring out by the action
 \[
 \tilde\fC\times \bC\ni(\fy,\zeta)\mapsto\big(\gamma\fy,\rho(\gamma)\zeta\big)\in\tilde\fC\times\bC, 
 \qquad \gamma\in G,
 \]
 ($G$ acts on $\tilde\fC$ by deck transformations). As $\rho$ is nontrivial, $(E,p)$, as a flat line bundle, is
 nontrivial, either---although, as a holomorphic line bundle $E$ may very well be trivial. The section
 $\tilde\fC\ni y\mapsto(y,1)\in\tilde\fC\times\bC$ induces a multivalued horizontal section of $(E,p)$, 
 any of whose germs at $\fx_0$ is the $\bf f$ sought.
 
 A further modification is possible when $X$ is Stein. In this case the universal cover $\tilde\fC$ can be 
 embedded in a Banach space. To see this, we first embed $X$ holomorphically and properly into some Euclidean
 space $\bC^N$, and arrange that in the embedding $x_0$ is sent to $0$. This exhibits $\fC$ as a closed
 complex direct submanifold\footnote{A subset $M$ of a complex Banach manifold $N$ 
 is a direct submanifold if 
 for each $q\in N$ there are a neighborhood $U\subset N$ of $q$, a complemented subspace $A\subset B$
 in a Banach space $B$, and an open $V\subset B$ such that the pairs $(U,U\cap M)$ and $(V,V\cap A)$ are
 biholomorphic.} 
  of the Banach space $C_A(S)^{\oplus N}$. The latter space is isomorphic to a finite
 codimensional subspace of $C(S)^{\oplus N}\approx C[0,1]^{\oplus N}$, hence has a Schauder basis,
 according to Miljutin and Schauder \cite{M66,S27}. All this implies that $\tilde\fC$ can be 
 embedded in a Banach space as a closed direct manifold. This is proved in \cite[Theorem 3.7]{L10}, except the
 assumption there is that $\fC$ should be embedded in a Banach space with unconditional basis. However, the 
 unconditionality of the basis there was used only to be able to quote certain earlier results of cohomological
 nature by Patyi and by this author. All those results follow 
 from cohomology vanishing in \cite[Theorem 9.1]{LP07}, one of
 whose assumptions (``plurisubharmonic domination'') could at the time be proved only when unconditional bases
 existed. In the meantime, however, Patyi extended plurisubharmonic domination to spaces with a Schauder basis
 \cite{P11,P15}. This means that embeddibility of the universal (or any) cover, \cite[Theorem 3.7]{L10}, holds under
 the assumption of Schauder, rather than unconditional, basis. This takes care of the embedding of our $\tilde \fC$.
 
 Once the embedding granted, for any pair of distinct points 
 $a,b\in\tilde\fC$ there is a $g\in\cO(\tilde\fC)$ that takes
 different values at $a,b$. If we choose $a,b$ both lie above $\fx_0$, then the pushforward $\bf f$  
 to $\fC$ of the germ
 of $g$ at $a$ continues analytically along any path in $\fC$, but those continuations do not define a single valued
 function on $\fC$.

\end{document}